\documentclass[11pt,leqno,british]{amsart}
\usepackage{fullpage,epsfig,graphics,amsbsy,amssymb,cancel,bbm}
\usepackage{psfrag,hyperref,color,slashed,}
\usepackage{graphicx}
\usepackage{amsfonts,mathdots,dsfont}
\usepackage{amssymb,amsmath,amscd,mathrsfs}
\usepackage{ifpdf}
\usepackage{eucal}
\usepackage{url}
\usepackage{amsthm}

\usepackage{tikz}
\usetikzlibrary{arrows,chains,matrix,positioning,scopes,snakes}

\makeatletter
\tikzset{join/.code=\tikzset{after node path={%
\ifx\tikzchainprevious\pgfutil@empty\else(\tikzchainprevious)%
edge[every join]#1(\tikzchaincurrent)\fi}}}
\makeatother

\tikzset{>=stealth',every on chain/.append style={join},
         every join/.style={->}}
\tikzset{
    >=stealth',
    punkt/.style={
           rectangle,
           rounded corners,
           draw=black, very thick,
           text width=6.5em,
           minimum height=2em,
           text centered},
    pil/.style={
           ->,
           thick,
           shorten <=2pt,
           shorten >=2pt,}
}



\newcommand{\BB}{\mathbb}


\newcommand{\bea}{\begin{eqnarray}}
\newcommand{\eea}{\end{eqnarray}}
\newcommand{\be}{\begin{equation}}
\newcommand{\ee}{\end{equation}}
\newcommand{\nn}{\nonumber}

\newcommand{\im}{\textrm{Im}\,}
\newcommand{\re}{\textrm{Re}\,}

\newcommand{\Span}{\textrm{Span}}

\newcommand{\real}{\mathbb{R}}
\newcommand{\comp}{\mathbb{C}}
\newcommand{\zahl}{\mathbb{Z}}

\newcommand{\widecheck}[1]{\stackrel{\!\!\!\vee}{#1}}




\newtheorem{theorem}{Theorem}[section]
\newtheorem{lemma}[theorem]{Lemma}
\newtheorem{proposition}[theorem]{Proposition}
\newtheorem{corollary}[theorem]{Corollary}
\newtheorem{definition}[theorem]{Definition}

\newtheoremstyle{remarkstyle}
{}
{10 mm}
{}
{}%
{\bfseries}
{}
{  }
{}%

\theoremstyle{remarkstyle} { \newtheorem{remark}[theorem]{Remark} }

\newenvironment{example}[1][Example]{\begin{trivlist}
\item[\hskip \labelsep {\bfseries #1}]}{\end{trivlist}}



\begin{document}

\begin{flushright}
{\tt UUITP-22/14}\\
\end{flushright}
\vspace{1cm}

\title[Multiple sine, multiple elliptic gamma functions and rational cones]%
{Multiple sine, multiple elliptic gamma functions and rational cones}

\author{Luigi Tizzano}
\address[Tizzano]{Department of Physics and Astronomy, Uppsala university,
Box 516, SE-75120 Uppsala, Sweden}
\email{luigi.tizzano@physics.uu.se}

\author{Jacob Winding}
\address[Winding]{Department of Physics and Astronomy, Uppsala university,
Box 516, SE-75120 Uppsala, Sweden}
\email{jacob.winding@physics.uu.se}
\begin{abstract} 
We define generalizations of the multiple elliptic gamma functions and the multiple sine functions, labelled by rational cones in $\mathbb{R}^r$. For $r=2,3$ we prove that the generalized multiple elliptic gamma functions enjoy a modular property determined by the cone. This generalizes the modular properties of the elliptic gamma function studied by Felder and Varchenko. The generalized multiple sine enjoy a related infinite product representation, generalizing the results of Narukawa for the ordinary multiple sine functions. 

 \end{abstract}

\maketitle
\setcounter{tocdepth}{2}
\tableofcontents

\section{Introduction}
The theory of multiple sine and multiple elliptic gamma functions already has a long history. As shown by Nishizawa \cite{N1}, it is possible to construct a hierarchy of meromorphic functions which incorporate the theta function $\theta_0(z,\tau)$ and the elliptic gamma function $\Gamma(z,\tau,\sigma)$. These functions have been called multiple elliptic gamma function $G_r(z|\tau_0,\dots ,\tau_r)$. Multiple sine functions $S_r(z|\omega_1, \dots, \omega_r)$ instead have been studied by Kurokawa \cite{K1,K2} and Jimbo-Miwa \cite{JM} starting from the Barnes' multiple gamma functions \cite{B}. A comprehensive review and thorough analysis of the properties of all these functions is contained in a paper by Narukawa \cite{Narukawa:2003}.

One of the original motivation in the Narukawa's paper was to understand the modular properties of multiple elliptic gamma functions. In fact it is very well known that the theta function exhibit modular invariance under the action of $SL_2(\BB Z) \ltimes \BB Z^2 $. For the elliptic gamma function (i.e. $G_1$ in the notation adopted here) there is a result by Felder and Varchenko \cite{FV} who proved that the modularity is instead $SL_3(\BB Z) \ltimes \BB Z^3$. The extension of these results to the case of multiple elliptic gamma functions has been proposed by Narukawa in his paper.

In this paper we define a generalization of both the multiple elliptic gamma and the multiple sine functions which we call  respectively generalized multiple gamma functions $G_r^C(z|\tau_0,\dots, \tau_r)$ and generalized multiple sine functions $S_r^C(z|\omega_1, \dots ,\omega_r)$. The superscript $C$ refers to a good rational polyhedral cone inside $\BB R^r$ which is the additional data used to generalize the functions. For general $r$ we are able to show a few properties of these new functions  that were already present for the ordinary ones. Moreover, in the specific cases $S^C_{2}$,$S^C_{3}$ and $G^C_{1}$,$G^C_{2}$ we prove an infinite product representation or modular property which is related or labelled by the cone. In particular, the infinite product representations exhibit a factorized form, with one factor from every 1-dimensional face of the cone, as well as the exponential of a generalized Bernoulli polynomial. For an $r$-dimensional cone $C$, the factors from the 1d faces are related by $SL_{r+1} (\mathbb{Z})$ elements, which are specified by the cone.
In the case of $r=1$, our generalized elliptic gamma function is closely related to the one defined in \cite{FHRZ}. A rather similar construction the one presented here is also used in \cite{conicalzeta}, but then in the context of defining a cone generalization of multiple zeta values.

The reason why we defined and studied these new generalized functions comes from physics. This is not surprising since the double sine function was already used in a paper of Jimbo and Miwa \cite{JM} to construct solutions of the quantum Knizhnik-Zamolodchikov equations. Nevertheless the physical context which motivated the present work is completely different and it is related to supersymmetric gauge theories. Recently it has been shown that the partition functions of certain supersymmetric theories can be expressed in terms of multiple sine and multiple elliptic gamma functions \cite{Hama:2011,Lockhart:2012}. In the works \cite{Qiu:2014,Qiu:2013} it has been understood that also the generalized multiple sine functions that we present here plays an important role in this picture. The appearance of the cone $C$ in physics is due to the fact that the supersymmetric gauge theories under investigation are defined on a particular class of toric manifolds and the cone is the object which classifies these types of geometries. Here we prove all the results about generalized multiple sine functions contained in \cite{Qiu:2014,Qiu:2013} and we obtain new results about properties of generalized multiple gamma functions. 

This paper is organized as follows: In section \ref{sec:multiplesine} we review known properties of multiple sine function, q-shifted factorials and multiple elliptic gamma function. In section \ref{sec:cones} we define the notion of  polyhedral cone $C$ and describe the $1$-Gorenstein condition for a cone. The section provide a technical lemma about subdivision of $2$d cones that will be crucial for proving the factorization results. In section \ref{sec:genBernoulli} we introduce the concept of generalized Bernoulli polynomials and we give an explicit expression for $B^C_{2,2}$. In section \ref{sec:generalizedsine} we define the generalized multiple sine functions, we discuss their properties and finally we prove the infinite product representation form of $S^C_2$ and $S^C_3$. Section \ref{sec:generalizedgamma} is the final section of this paper. Here we define the generalized multiple elliptic gamma functions and we prove the factorization properties of $G^C_1$ and $G^C_2$.  The case of general $r$ is the subject of ongoing work \cite{future_work}. 
\\\\
{\bf Acknowledgments:} The authors wish to express their gratitude to Jian Qiu and Maxim Zabzine for suggesting this project, as well as many interesting and useful discussions and suggestions. Both authors are supported in part by Vetenskapsr\aa det under grants \#2011-5079 and \#2014-5517, in part by the STINT grant and in part by the Knut and Alice Wallenberg Foundation.

\section{The multiple sine functions and multiple elliptic gamma functions}\label{sec:multiplesine}
In this section we review the multiple elliptic gamma functions $G_r (z|\bar \tau)$ and their relation to the multiple sine functions $S_r ( z| \bar \omega)$ as defined by Narukawa \cite{Narukawa:2003}. 
\subsection{The q-shifted factorial}
Let $x=e^{2 \pi iz}, q_j=e^{2 \pi i \omega_j}$ for
$z \in \comp$ and $\omega_j \in \comp-\real \ (0 \le j \le r)$, and
\be
\begin{split}
\underline{q} &= ( q_0, \cdots, q_r ), \\
\underline{q}^- (j) &= ( q_0, \cdots, \widecheck{q_j}, \cdots, q_r), \\
\underline{q} [j] &= ( q_0, \cdots, q_j^{-1}, \cdots, q_r), \\
\underline{q}^{-1} &= ( q_0^{-1}, \cdots, q_r^{-1} ) , 
\end{split}
\ee
where $\widecheck{q_j}$ means the exclusion of $q_j$.
When $\im\omega_j >0$ for all $j$, define the \emph{$q$-shifted factorial}
\be
(x|\underline{q})_\infty
= \prod_{j_0,\cdots,j_r=0}^{\infty}
(1- x q_0^{j_0} \cdots q_r^{j_r}).
\ee
This infinite product converges absolutely
when $|q_j| <1$.
Thus this function is a holomorphic function
with regard to $z$, whose zeros are
\be
z = \omega_0 \zahl_{\le 0} +\cdots +\omega_r \zahl_{\le 0} +\zahl.
\ee

In general we can define the $q$-shifted factorial
for $\omega_j \in \comp-\real$ as follows:
When $\im\omega_0,\cdots,\im\omega_{k-1} <0$ and
$\im\omega_k,\cdots,\im\omega_r >0$,
that is, $|q_0|,\cdots,|q_{k-1}| > 1$ and
$|q_k|,\cdots,|q_r| < 1$, we define
\be\label{qfac}
\begin{split}
(x|\underline{q})_\infty &= 
\left\{ (q_0^{-1} \cdots q_{k-1}^{-1} x|
( q_0^{-1}, \cdots, q_{k-1}^{-1}, q_k, \cdots, q_r)
)_\infty \right\}^{(-1)^k}\\
&= \left\{ \prod_{j_0,\cdots,j_r=0}^{\infty}
(1- x q_0^{-j_0 -1} \cdots q_{k-1}^{-j_{k-1} -1}
 q_k^{j_k} \cdots q_r^{j_r}) \right\}^{(-1)^k}. 
 \end{split}
\ee

The q-shifted factorial is a meromorphic function of $z$ satisfying the following functional
equations.
\begin{proposition}\label{prop:qfactorialprop}
\be \label{eq:blockinversion}
(x|\underline{q})_\infty
= \frac{1}{(q_j^{-1} x|\underline{q} [j])_\infty}, \ \ \ \ (q_j x| \underline q )_\infty = \frac{(x|\underline q)_\infty}{(x|\underline q^{-}(j))_\infty } . 
\ee

\end{proposition}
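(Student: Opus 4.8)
The plan is to reduce both identities to the absolutely convergent regime $\im\omega_j>0$ (i.e.\ $|q_j|<1$) for all $j$, where $(x|\underline q)_\infty$ is given by the explicit infinite product, and then to propagate them to arbitrary $\omega_j\in\comp-\real$ through the defining relation \eqref{qfac}.

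First I would establish the second identity in the convergent case. Writing
\be
(x|\underline q)_\infty=\prod_{n_0,\cdots,n_r=0}^{\infty}\bigl(1-x\,q_0^{n_0}\cdots q_r^{n_r}\bigr),
\ee
I separate the factors with $n_j=0$ from those with $n_j\ge1$. The $n_j=0$ slice is exactly $(x|\underline q^-(j))_\infty$, while in the $n_j\ge1$ slice the substitution $n_j\to n_j+1$ turns the argument $x$ into $q_j x$ and reproduces $(q_j x|\underline q)_\infty$. This gives the factorization $(x|\underline q)_\infty=(x|\underline q^-(j))_\infty\,(q_j x|\underline q)_\infty$, which is the second relation in \eqref{eq:blockinversion}.

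Next I would verify the first identity, again starting in the convergent regime. If $|q_j|<1$ then the tuple $\underline q[j]$ has the single entry $q_j^{-1}$ of modulus $>1$; using the permutation symmetry of the defining product to move this entry to the front and applying \eqref{qfac} with $k=1$ and argument $q_j^{-1}x$, the prefactor $(q_j^{-1})^{-1}=q_j$ multiplies the argument to give $q_j\cdot q_j^{-1}x=x$, the inner tuple collapses back to $\underline q$, and the overall exponent is $(-1)^1$. Hence $(q_j^{-1}x|\underline q[j])_\infty=(x|\underline q)_\infty^{-1}$, which is the first relation in \eqref{eq:blockinversion}.

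Finally I would pass to general $\omega_j\in\comp-\real$, where both identities follow by applying the defining relation \eqref{qfac} to each side. For the first identity, inverting the single entry $q_j$ changes the number $k$ of entries of modulus $>1$ by $\pm1$; writing out \eqref{qfac} for $\underline q$ and for $\underline q[j]$ and using the permutation symmetry of the inner convergent product, the accumulated prefactor on the argument cancels against the shift $x\to q_j^{-1}x$, while the exponents $(-1)^k$ and $(-1)^{k\pm1}$ differ by a sign---exactly the inversion claimed. For the second identity, when $|q_j|<1$ the large-modulus entries are untouched by peeling off $q_j$, so \eqref{qfac} reduces the claim to the convergent computation above, and when $|q_j|>1$ one first applies the already established first identity to replace $q_j$ by $q_j^{-1}$. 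The delicate point throughout is the bookkeeping of these $(-1)^k$ exponents and the argument prefactors; the symmetry of the defining product makes all reorderings harmless, and this matching is precisely the mechanism that \eqref{qfac} was designed to encode.
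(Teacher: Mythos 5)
Your proposal is correct. Note that the paper does not actually prove this proposition --- it simply defers to Narukawa --- so there is no in-paper argument to compare against; your write-up supplies the missing details, and the strategy (verify both identities in the absolutely convergent regime $|q_i|<1$ by splitting off the $n_j=0$ slice and shifting $n_j\mapsto n_j+1$, then transport them to general $\omega_j\in\comp-\real$ through the defining relation \eqref{qfac}) is the standard one and is essentially what Narukawa does. Your bookkeeping checks out: for the first identity, passing from $\underline q$ to $\underline q[j]$ changes the count $k$ of modulus-$>1$ entries by exactly one, the prefactor accumulated on the argument in \eqref{qfac} absorbs the shift $x\to q_j^{-1}x$, the inner convergent tuples coincide as sets, and the exponents $(-1)^k$ and $(-1)^{k\pm1}$ produce the claimed inversion; for the second identity with $|q_j|>1$, your reduction via the first identity to the statement $(q_j^{-1}x|\underline q[j])_\infty=(x|\underline q[j])_\infty/(x|\underline q^-(j))_\infty$ is exactly the already-settled case since $|q_j^{-1}|<1$ and $\underline q[j]^-(j)=\underline q^-(j)$. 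The one point you lean on without proof is that the general definition \eqref{qfac} is invariant under permutations of the $q_i$ (the paper states it only for the ordering in which the modulus-$>1$ entries come first); this is true because the inner convergent product is manifestly symmetric and the prefactor and exponent depend only on the set of large-modulus entries, but since you invoke it at several places it would be worth stating explicitly as a preliminary observation.
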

\begin{proof}
See Narukawa \cite{Narukawa:2003}
\end{proof}

\begin{proposition}\label{prop:gluing}
\be
 \frac{(x|q_0,q_1,\ldots,q_r)_{\infty}}{(x|q_0,q_0q_1,\ldots,q_r)_{\infty}}=(x|q_1^{-1},q_0q_1,\ldots,q_r)_{\infty}^{-1}~.\label{modular_new}
\ee
\end{proposition}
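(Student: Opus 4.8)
The plan is to reduce both sides of \eqref{modular_new} to absolutely convergent infinite products in the chamber where $\im\omega_j>0$ for all $j$ (so that every $|q_j|<1$), to compare those products factor by factor, and then to extend the resulting identity to the full domain $\omega_j\in\comp-\real$ by meromorphic continuation in $z$. The first move I would make is to dispose of the inverse base $q_1^{-1}$ appearing on the right. Applying the block-inversion equation \eqref{eq:blockinversion} of Proposition \ref{prop:qfactorialprop} to the tuple $(q_1^{-1},q_0q_1,q_2,\dots,q_r)$ with $j=0$ gives
\be
(x|q_1^{-1},q_0q_1,\dots,q_r)_\infty=\frac{1}{(q_1x|q_1,q_0q_1,q_2,\dots,q_r)_\infty},
\ee
so that the claim becomes equivalent to the product identity
\be
(x|q_0,q_1,q_2,\dots,q_r)_\infty=(x|q_0,q_0q_1,q_2,\dots,q_r)_\infty\,(q_1x|q_1,q_0q_1,q_2,\dots,q_r)_\infty,
\ee
in which every base now has modulus strictly less than $1$.

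Next I would write all three factors as their defining products and track only the exponents of $q_0$ and $q_1$, since $q_2,\dots,q_r$ enter identically on both sides as spectators. In the numerator $(x|q_0,q_1,\dots)_\infty$ the pair $(a,b)$ of exponents of $(q_0,q_1)$ ranges over all of $\zahl_{\ge0}^{2}$. Using $(q_0q_1)^{j_1}=q_0^{j_1}q_1^{j_1}$, the denominator $(x|q_0,q_0q_1,\dots)_\infty$ carries the exponent pair $(a,b)=(j_0+j_1,j_1)$, which ranges exactly over $\{a\ge b\ge0\}$; hence in the quotient the factors with $a\ge b$ cancel and precisely those with $0\le a<b$ survive. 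On the other side, expanding $q_1x\cdot q_1^{k_0}(q_0q_1)^{k_1}=x\,q_0^{k_1}q_1^{1+k_0+k_1}$ shows that $(q_1x|q_1,q_0q_1,\dots)_\infty$ carries the pairs $(a,b)=(k_1,1+k_0+k_1)$, and the assignment $(k_0,k_1)\mapsto(k_1,1+k_0+k_1)$ is a bijection onto $\{0\le a<b\}$. The two surviving products therefore agree term by term, which establishes the product identity and hence \eqref{modular_new}.

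The only point genuinely requiring care is analytic rather than combinatorial. The cancellation of the common factors and the reindexing are legitimate because, for $|q_j|<1$, all the products converge absolutely and may be regrouped freely; here I would also check that the auxiliary hypothesis $|q_0q_1|<1$ (needed for the products with base $q_0q_1$) is automatic in this chamber and is preserved throughout the reduction. Once the identity is known on this open set, both sides of \eqref{modular_new} are meromorphic in $z$ and depend analytically on the $\omega_j$, so the identity theorem propagates it to all $\omega_j\in\comp-\real$. I do not anticipate any deeper obstacle: the substance is the exponent bookkeeping of the preceding paragraph, and the main thing to be vigilant about is maintaining the convergence hypotheses and the correct orientation of the block-inversion step when removing $q_1^{-1}$.
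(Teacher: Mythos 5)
Your reduction via the block--inversion identity and the exponent bookkeeping in the chamber $\im\omega_j>0$ for all $j$ is correct, and it is essentially the first case of the paper's own proof: the quotient on the left leaves exactly the factors whose $q_0$-exponent is strictly smaller than their $q_1$-exponent, and these are matched bijectively by the factors of $(q_1x\,|\,q_1,q_0q_1,\dots,q_r)_\infty$.

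The gap is in your final continuation step. The parameter domain $\omega_j\in\comp-\real$ is disconnected: it splits into $2^{r+1}$ chambers according to the signs of $\im\omega_j$, further subdivided here by the sign of $\im(\omega_0+\omega_1)$, which governs the factor $(x|q_0,q_0q_1,\dots,q_r)_\infty$. The identity theorem propagates an identity only within a connected component, so your argument establishes \eqref{modular_new} only where all $\im\omega_j>0$. Moreover, the $q$-shifted factorial in a chamber with some $\im\omega_j<0$ is \emph{not} the analytic continuation of the convergent product from the chamber $\im\omega_j>0$; it is a separate definition, equation \eqref{qfac}. (Indeed, for fixed generic $x$ the zeros of $\prod_{j\ge 0}(1-xq^j)$ accumulate on all of $|q|=1$, so the unit circle is a natural boundary and no continuation across it exists.) This is precisely why the paper's proof is case by case: for $|q_1|>1$ with $|q_0q_1|<1$, and again for $|q_0q_1|>1$, one must insert \eqref{qfac} and redo the exponent bookkeeping, and the surviving lattice region and reindexing genuinely change (one obtains, e.g., the region $i_1\le i_0$ with negative powers of $q_1$ rather than $i_1>i_0$). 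Those cases are parallel in spirit to the one you treat but are not consequences of it, and the proposed continuation argument cannot supply them; to complete the proof you must carry out the analogous computation in each remaining chamber.
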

\begin{proof}
  We prove the lemma case by case, first let $|q_i|<1, \ i=0,\ldots,r$, then
  \bea
    \frac{(x|q_0,q_1,\cdots,q_r)_{\infty}}{(x|q_0,q_0q_1,\cdots, q_r)_{\infty}}&=&\frac{\prod\limits_{i_0,\ldots,i_r\geq0}(1-xq_0^{i_0}q_1^{i_1} \cdots q_r^{i_r} )}{\prod\limits_{i_0,\ldots,i_r\geq0}(1-xq_0^{i_0}(q_0q_1)^{i_1} \cdots q_r^{i_r})}
 =\prod\limits_{i_0 \geq 0,\; i_1>i_0, \; i_{2},\cdots,i_r \geq 0}(1-xq_0^iq_1^j q_2^{i_2} \cdots q_r^{i_r} )\nn \\
 &=&\prod\limits_{i_0,\ldots, i_r \geq 0}(1-xq_1(q_0q_1)^{i_0} q_1^{i_1} q_2^{i_2} \cdots q_r^{i_r} )\\
 &=&(xq_1|q_0q_1,q_1,\cdots, q_r)_{\infty}=(x|q_0q_1,q_1^{-1},\cdots,q_r)_{\infty}^{-1}~.\nn
 \eea
  If instead $|q_0|<1$, $|q_1|>1$ but $|q_0q_1|<1$ and $|q_2|,\cdots,|q_r| < 1$, then
    \bea
     \frac{(x|q_0,q_1,\cdots,q_r)_{\infty}}{(x|q_0,q_0q_1,\cdots, q_r)_{\infty}}&=&\frac{1}{\prod\limits_{i_0,\ldots,i_r \geq0}(1-xq_0^{i_0}q_1^{-i_1-1} \cdots q_{r}^{i_r})\prod\limits_{i_0,\ldots,i_r\geq0}(1-xq_0^{i_0}(q_0q_1)^{i_1} \cdots q_r^{i_r})}\nn\\
  &=&\frac{1}{\prod\limits_{i_0\geq 0,\; i_1\leq i_0,\; i_2,\ldots,i_r \geq 0}(1-xq_0^{i_0}q_1^{i_1} \cdots q_r^{i_r} )}=\frac{1}{\prod\limits_{i_0,\ldots,i_r\geq 0}(1-xq_1^{-i_0}(q_0q_1)^{i_1} q_2^{i_2}\cdots q_r^{i_r})} \nn \\
  &=&(xq_1|q_1,q_0q_1, q_2,\cdots,q_r)_{\infty}=(x|q_1^{-1},q_0q_1,q_2,\cdots,q_r)_{\infty}^{-1}.\nn
  \eea
  But if $|q_0q_1|>1$ 
    \bea
  \frac{(x|q_0,q_1,\cdots,q_r)_{\infty}}{(x|q_0,q_0q_1,\cdots,q_r)_{\infty}}&=&\frac{\prod\limits_{i_0,\ldots,i_r \geq0}(1-xq_0^{i_0}(q_0q_1)^{-i_1-1} q_2^{i_2} \cdots q_r^{i_r} )} {\prod\limits_{i_0,\ldots i_{r} \geq0}(1-xq_0^{i_0}q_1^{-i_1-1}\cdots q_r^{i_r})} =\prod_{i_0\geq0,-i_1-1\leq i<0, i_2,\ldots,i_r \geq 0}(1-xq_0^{i_0}q_1^{-i_1-1}\cdots q_r^{i_r}) \nn \\
  &=&\prod_{i_0,\ldots,i_r\geq0}(1-xq_0^{-i_0-1}q_1^{i_0+i_1+1} q_2^{i_2}\cdots q_r^{i_r})=(xq_1|q_0q_1,q_1,q_2,\cdots,q_r)_{\infty}\nn \\
  &=& (x|q_0q_1,q_1^{-1}, q_2,\cdots,q_r)_{\infty}^{-1}.\nn
  \eea
  By switching the role of $q_0,q_1,q_0q_1$ one can obtain the other cases. In the cases where some $|q_i|>1, \; i \geq 2$, then all the above manipulations go through in the very same way. 
  \end{proof}

\subsection{The multiple elliptic gamma function}
Let's introduce the notation for $\underline{\omega}$ as 
\be
\begin{split}
\underline{\omega} &= ( \omega_0, \cdots, \omega_r ), \\
\underline{\omega}^- (j)
&= ( \omega_0, \cdots, \widecheck{\omega_j}, \cdots, \omega_r ), \\
\underline{\omega} [j]
&= ( \omega_0, \cdots, -\omega_j, \cdots, \omega_r ), \\
- \underline{\omega}
&= ( -\omega_0, \cdots, -\omega_r ), \\
|\underline{\omega}| &= \omega_0 + \cdots + \omega_r \ ,
\end{split}
\ee
and define the \emph{multiple elliptic gamma function}
\be
\begin{split}
G_r (z|\underline{\omega})
&= (x^{-1} q_0 \cdots q_r |\underline{q})_\infty
\{(x|\underline{q})_\infty \}^{(-1)^r} \label{13b} \\
&= \{(x^{-1} |\underline{q}^{-1})_\infty \}^{(-1)^{r+1}}
\{(x|\underline{q})_\infty \}^{(-1)^r} . 
\end{split}
\ee
$G_r (z|\underline{\omega})$ is defined for
$\omega_j \in \comp - \real$ from the general definition of
$(x;\underline{q})_\infty$.
The hierarchy of $G_r (z|\underline{\omega})$ includes
the familiar theta function $\theta_0 (z|\omega) $ ($r=0$) and
the elliptic gamma function $\Gamma (z|\tau,\sigma)$ ($r=1$) 
which appeared in \cite{FV,R1}.

The multiple elliptic gamma function satisfy the following functional equations:
\begin{align}
G_r (z+1|\underline{\omega}) &= G_r(z|\underline{\omega}), \\
G_r (z+\omega_j|\underline{\omega})
&= G_{r-1} (z|\underline{\omega}^- (j)) \ 
G_r (z|\underline{\omega}), \label{eq:GrShift1} \\
G_r (z|\underline{\omega})
&= \frac{1}{G_r (z-\omega_j|\underline{\omega} [j])},  \\
G_r (-z|-\underline{\omega})
&= \frac{1}{G_r (z|\underline{\omega})}, \\
G_r (z|\underline{\omega}) G_r (z|\underline{\omega}[j])
&= \frac{1}{G_{r-1} (z|\underline{\omega}^- (j))}.
\end{align}
$G_r (z|\underline{\omega})$ can be expressed as the following infinite product when $\mathrm{Im}\ \omega_j > 0 \ \forall j$:
\be \label{eq:GrDef2}
G_r(z|\underline\omega) = \prod_{j_0,\dots, j_r=0}^{\infty} (1 - e^{2\pi i (z + j_0\omega_0 + \dots + j_r\omega_r)})^{(-1)^r} \cdot (1 - e^{2\pi i (|\underline \omega| - z + j_0\omega_0 + \dots + j_r\omega_r)}).
\ee

We also note the following formula given in \cite{Narukawa:2003}, valid for $\mathrm{Im}\, z > 0, \mathrm{Im}\ \tau_j > 0 \ \forall j$:
\be 	\label{eq:plethystic1}
( x | \underline q )_\infty  = \prod_{j_0,\ldots,j_r = 0}^{\infty} ( 1 - x q_0^{j_0} \cdots q_r^{j_r} ) = \exp \left ( - \sum_{n=1}^{\infty} \sum_{j_0,\ldots,j_r = 0}^{\infty} \frac{(xq_0^{j_0} \cdots q_r^{j_r} )^n }{n} \right ) \ ,
\ee
of which we will use a simple generalization to see that some infinite products are convergent and well-defined.

In \cite{Narukawa:2003} Narukawa also proved two important theorems about modularity of $G_r(z|\underline \omega)$:
\begin{theorem}[Modular properties of $G_r (z| \underline{\omega})$] \label{thm:G2modularity2}
If $r \ge 2, \im \frac{\omega_j}{\omega_k} \ne 0$,
then the multiple elliptic gamma function satisfies the identity
\be
\prod_{k=1}^r G_{r-2}
\left( \frac{z}{\omega_k} \bigg| \left(
\frac{\omega_1}{\omega_k}, \cdots,
\widecheck{\frac{\omega_k}{\omega_k}}, \cdots,
\frac{\omega_r}{\omega_k} \right) \right)
= \exp \left\{
- \frac{2 \pi i}{r!} B_{rr} (z|\underline{\omega})
\right\}.
\ee
\end{theorem}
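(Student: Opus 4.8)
The plan is to take the logarithm of the left-hand side, rewrite it as a sum over the poles of a single auxiliary meromorphic function $\Phi(t)$, and then read the identity off the residue theorem: the origin will contribute the Bernoulli term, and the remaining poles will reassemble the $G_{r-2}$ factors.

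First I would expand, using \eqref{eq:plethystic1} and \eqref{eq:GrDef2} and writing $\mu_l=\omega_l/\omega_k$, each factor as
\[
\log G_{r-2}\!\left(\tfrac{z}{\omega_k}\Big|(\mu_l)_{l\ne k}\right)=-\sum_{n\ge1}\frac{1}{n\prod_{l\ne k}(1-e^{2\pi in\mu_l})}\Big[(-1)^{r}e^{2\pi inz/\omega_k}+e^{2\pi in(|\underline\mu|-z/\omega_k)}\Big].
\]
The crucial point is that, on setting $t=2\pi in/\omega_k$, each summand is the residue at that point of the master function
\[
\Phi(t)=\frac{e^{zt}}{t\,\prod_{l=1}^{r}(e^{\omega_l t}-1)},
\]
whose poles away from $t=0$ sit on the $r$ rays $t=2\pi in/\omega_k$ ($k=1,\dots,r$, $n\in\zahl\setminus\{0\}$); the hypothesis $\im(\omega_j/\omega_k)\ne0$ keeps these rays distinct, so every such pole is simple.

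Next I would compute $\res_{t=2\pi in/\omega_k}\Phi$ directly: differentiating $e^{\omega_k t}-1$ at the pole gives a factor $\omega_k^{-1}$, the surviving product $\prod_{l\ne k}(e^{2\pi in\mu_l}-1)$ matches the denominator above up to the sign $(-1)^{r-1}$, and the poles with $n>0$ and $n<0$ reproduce, respectively, the $x$-term and the reflected $x^{-1}q_0\cdots q_r$-term of $G_{r-2}$ (the latter after $e^{-a}-1=e^{-a}(1-e^{a})$ and $|\underline\mu|=\sum_{l\ne k}\mu_l$). Summing over all nonzero poles then yields $2\pi i\sum_{\text{nonzero}}\res\Phi=\log(\text{LHS})$. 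Assuming the contour integral of $\Phi$ at infinity vanishes, the residue theorem converts this into $-2\pi i\,\res_{t=0}\Phi$; and since $\prod_{l=1}^r(e^{\omega_l t}-1)\sim t^r\prod_l\omega_l$, the function $\Phi$ has a pole of order $r+1$ at the origin with $\res_{t=0}\Phi=B_{rr}(z|\underline\omega)/r!$ by the generating function $t^{r}e^{zt}/\prod_{l=1}^{r}(e^{\omega_l t}-1)=\sum_nB_{rn}(z|\underline\omega)t^n/n!$ (which involves only $\omega_1,\dots,\omega_r$, consistently with the left-hand side). Exponentiating gives the claim.

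The hard part will be making the residue step rigorous. Since $\Phi$ does not decay in every direction one cannot simply close a large circle; the clean substitute is an Abel--Plana/Hermite type argument, writing the $n$-sums as an integral of $\Phi$ along a contour that separates the origin from the pole rays and then deforming it across $t=0$, the jump being exactly $\res_{t=0}\Phi$. To control convergence I would first work in the chamber where all $\im\omega_j$ share a common sign, so that \eqref{eq:plethystic1} converges and the geometric resummations above are legitimate, and then extend to the stated region $\im(\omega_j/\omega_k)\ne0$ by analytic continuation; this genericity is precisely what keeps the pole rays distinct and the functions $G_{r-2}$ well defined.
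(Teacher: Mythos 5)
The paper does not actually prove this statement: it is imported verbatim from Narukawa \cite{Narukawa:2003}, so there is no internal proof to compare against. Your sketch is, in substance, the argument given there, and your residue bookkeeping is correct: with $\Phi(t)=e^{zt}/\bigl(t\prod_{l=1}^r(e^{\omega_l t}-1)\bigr)$ one checks that $2\pi i\,\res_{t=2\pi in/\omega_k}\Phi$ equals $-\tfrac{(-1)^r}{n}e^{2\pi inz/\omega_k}\prod_{l\ne k}(1-e^{2\pi in\omega_l/\omega_k})^{-1}$ for $n>0$ and the reflected term for $n<0$, and $\res_{t=0}\Phi=B_{rr}(z|\underline\omega)/r!$ by \eqref{eq:BernoulliExp}, so the identity is exactly the statement that all residues of $\Phi$ sum to zero.

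Two points in your outline are genuine gaps rather than routine details. First, the "contour at infinity" step cannot be salvaged by a large circle even after restricting $z$: $\Phi$ grows exponentially in the open sectors between the pole rays $\arg t=\arg(2\pi i/\omega_k)$ for generic $z$, and the correct substitute (which is what Narukawa actually does) is to start from the Hankel-type contour integral representation of $\log\Gamma_r$, equivalently of $\partial_s\zeta_r(s,z|\underline\omega)|_{s=0}$, deform it onto the rays, and only then collect residues; the admissible range of $z$ is afterwards enlarged using the difference equations \eqref{eq:GrShift1}. Your "Abel--Plana type argument" gestures at this but the choice of contour and the decay estimates along the rays are where the real work sits. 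Second, your opening expansion of $\log G_{r-2}(z/\omega_k|(\omega_l/\omega_k)_{l\ne k})$ via \eqref{eq:plethystic1} is only valid when all $\im(\omega_l/\omega_k)>0$, and for a fixed $\underline\omega$ this cannot hold for every $k$ simultaneously (e.g.\ $\im(\omega_1/\omega_2)$ and $\im(\omega_2/\omega_1)$ have opposite signs). So for some of the $r$ factors you must use the general definition \eqref{qfac} of the $q$-shifted factorial, with its $(-1)^k$ exponent and shifted argument; the residues along the corresponding ray do reassemble into exactly that expression, but this is a sign-sensitive verification that your proposal elides, and the convergence of the $n$-sum along each ray imposes the $z$-restriction mentioned above. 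Neither issue is fatal --- both are resolved in \cite{Narukawa:2003} --- but as written your proof is a correct skeleton with the analytic core still to be supplied.
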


\begin{theorem}[Modular properties of $G_r (z| \underline{\omega})$] \label{thm:G2modularity}
If $\im\omega_j \ne 0$ and $\im\frac{\omega_j}{\omega_k} \ne 0$, then
\be
\begin{split}
G_r (z| \underline{\omega})
&= \exp \left\{
\frac{2 \pi i}{(r+2)!} B_{r+2,r+2} (z|(\underline{\omega},-1))
\right\}  \\
& \times \prod_{k=0}^r G_r
\left( \frac{z}{\omega_k} \bigg| \left(
\frac{\omega_0}{\omega_k}, \cdots,
\widecheck{\frac{\omega_k}{\omega_k}}, \cdots,
\frac{\omega_r}{\omega_k},
-\frac{1}{\omega_k}
\right) \right)  \\
&= \exp \left\{
- \frac{2 \pi i}{(r+2)!} B_{r+2,r+2} (z|(\underline{\omega},1))
\right\}  \\
& \times \prod_{k=0}^r G_r
\left( -\frac{z}{\omega_k} \bigg| \left(
-\frac{\omega_0}{\omega_k}, \cdots,
-\widecheck{\frac{\omega_k}{\omega_k}}, \cdots,
-\frac{\omega_r}{\omega_k},
-\frac{1}{\omega_k}
\right) \right). 
\end{split}
\ee
\end{theorem}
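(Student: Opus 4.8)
The plan is to obtain this statement as a direct corollary of the preceding relation, Theorem~\ref{thm:G2modularity2}, by applying that theorem one dimension higher with a single auxiliary parameter appended to $\underline{\omega}$. Concretely, I would apply Theorem~\ref{thm:G2modularity2} with $r$ replaced by $r+2$ (legitimate since $r+2 \ge 2$) and with the enlarged $(r+2)$-tuple $\underline{\eta} = (\underline{\omega},\mp 1) = (\omega_0,\dots,\omega_r,\mp 1)$. Then $G_{(r+2)-2}=G_r$ and $B_{(r+2)(r+2)}=B_{r+2,r+2}$, so the right-hand side of Theorem~\ref{thm:G2modularity2} is already exactly $\exp\{-\tfrac{2\pi i}{(r+2)!}B_{r+2,r+2}(z|(\underline{\omega},\mp 1))\}$, producing the Bernoulli prefactor in the claim; the two displayed identities will correspond to the two sign choices $\underline{\eta}=(\underline{\omega},-1)$ and $\underline{\eta}=(\underline{\omega},+1)$.

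First I would check that the hypotheses match precisely. Theorem~\ref{thm:G2modularity2} requires $\im(\eta_j/\eta_k)\neq 0$ for the enlarged tuple. For $\underline{\eta}=(\underline{\omega},\mp 1)$ this amounts to $\im(\omega_j/\omega_k)\neq 0$ together with $\im\big(\omega_j/(\mp 1)\big)=\mp\,\im\omega_j\neq 0$ (and, symmetrically, $\im\big((\mp 1)/\omega_k\big)\neq 0$), i.e.\ to $\im\omega_j\neq 0$. These are exactly the two hypotheses $\im\omega_j\neq 0$ and $\im\frac{\omega_j}{\omega_k}\neq 0$ stated here, which also explains why the present theorem carries the extra assumption $\im\omega_j\neq 0$ absent from Theorem~\ref{thm:G2modularity2}.

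Next I would split the product $\prod_{k}$ over the enlarged tuple into the $r+1$ factors coming from $\omega_0,\dots,\omega_r$ and the single distinguished factor coming from the appended entry $\mp 1$. The ordinary factors read
\[
\prod_{k=0}^{r} G_r\left(\frac{z}{\omega_k}\,\bigg|\,\left(\frac{\omega_0}{\omega_k},\dots,\widecheck{\frac{\omega_k}{\omega_k}},\dots,\frac{\omega_r}{\omega_k},\frac{\mp 1}{\omega_k}\right)\right),
\]
which is precisely the product in the claim, with last parameter $-1/\omega_k$ for the choice $\mp 1=-1$. The distinguished factor, obtained by dividing all remaining entries by $\mp 1$, equals $G_r(\mp z|\mp\underline{\omega})$. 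For $\underline{\eta}=(\underline{\omega},-1)$ the reflection equation $G_r(-z|-\underline{\omega})=1/G_r(z|\underline{\omega})$ turns this into $1/G_r(z|\underline{\omega})$; moving it across and flipping the overall sign of the exponent then reproduces the first displayed identity verbatim.

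Finally, for the second identity I would take $\underline{\eta}=(\underline{\omega},+1)$, whose distinguished factor is simply $G_r(z|\underline{\omega})$. Isolating it leaves $G_r(z|\underline{\omega})$ equal to $\exp\{-\tfrac{2\pi i}{(r+2)!}B_{r+2,r+2}(z|(\underline{\omega},1))\}$ times $\prod_{k=0}^{r}G_r(z/\omega_k|\dots,1/\omega_k)^{-1}$, and applying the reflection equation in the form $1/G_r(w|\underline{\eta})=G_r(-w|-\underline{\eta})$ to each factor negates all arguments and parameters, giving the second line. Granting Theorem~\ref{thm:G2modularity2}, the only work is this bookkeeping of arguments and the correct use of the reflection equation, so there is no real obstacle here; the genuine analytic content lies in Theorem~\ref{thm:G2modularity2} itself, and were that not available the main difficulty would be to prove it, e.g.\ from a Mellin--Barnes integral representation of $\log G_r$ built from the plethystic form \eqref{eq:plethystic1} together with a contour rotation that collects the residue at $t=0$ as the Bernoulli polynomial.
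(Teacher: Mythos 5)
Your derivation is correct. The paper itself offers no proof of this theorem --- both it and Theorem~\ref{thm:G2modularity2} are simply quoted from Narukawa \cite{Narukawa:2003} --- so there is nothing internal to compare against, but your reduction is exactly the standard one: apply Theorem~\ref{thm:G2modularity2} with $r\mapsto r+2$ to the enlarged tuple $(\underline{\omega},\mp 1)$, peel off the distinguished factor via the reflection equation $G_r(-z|-\underline{\omega})=1/G_r(z|\underline{\omega})$, and note that the extra hypothesis $\im\omega_j\neq 0$ is precisely what makes the enlarged tuple satisfy $\im(\eta_j/\eta_k)\neq 0$. The only point worth making explicit is that the argument is not circular: Theorem~\ref{thm:G2modularity2} follows independently by equating the two infinite product representations of $S_r$ in \eqref{eq:SrFactorization}, so it can legitimately serve as the input here.
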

In these theorems we see the appearance of the so called \emph{multiple Bernoulli polynomials}  $B_{r,n}(z|\omega_1,\ldots,\omega_r)$.
They are defined by the following expansion:
\be \label{eq:BernoulliExp}
	\frac{t^r e^{zt}}{\prod_{i=1}^r (e^{\omega_i t} - 1 ) } = \sum_{n=0}^\infty B_{r,n} ( z | \underline \omega ) \frac{t^n}{n!}.
\ee
In particular $B_{r,r}$ is a polynomial in $z$ of order $r$. 
\begin{remark}
Consider $\mathbb{R}^{r+1}_{\geq 0}$, which is a cone of dimension $r+1$, and has a set of $r+1$, 1-dimensional faces generated by the standard basis vectors $\{ e_1, \dots,e_{r+1} \}$. For each such face, generated by $e_i$, we construct a matrix $\tilde K_i = [e_i ,e_{\sigma(1)},\dots, e_{\sigma(r)}]^{-1}$, where $\sigma$ is a permutation of $1,\ldots, i-1,i+1,\ldots , r+1$ such that $\det (\tilde K_i) = 1$. 

It is convenient for our purposes to extend each $\tilde K_i$ into a matrix $K_i$ in $SL_{r+2}(\mathbb Z)$ by adjoining a $1$ at the lower right entry:
\[
	\tilde K_i   \to K_i =  \begin{pmatrix}
		\tilde K_i & 0 \\
		0 & 1 
	\end{pmatrix} \in SL_{r+2}(\mathbb Z).
\]
Let us also denote by $S$ the S-duality element in $SL_{r+2}(\mathbb{Z})$, i.e.
\be \label{eq:Smatrix}
	S = \begin{pmatrix}
		0 & \cdots & \cdots & -1 \\
		\vdots & 1 & 0 & \vdots \\
		\vdots & 0 & \ddots & \vdots \\
		1 &  \cdots & \cdots & 0
	\end{pmatrix}.	
\ee
Next, consider the following group action of $g\in SL_{r+2} (\mathbb{Z})$ on $(z | \omega_0,\ldots,\omega_r,1)$:
\be \label{eq:groupaction}
	g\cdot ( z | (\underline \omega) ) = (\frac{z}{ (g\underline \omega)_{r+1}} | \frac{(g\underline \omega)_{0}}{(g\underline \omega)_{r+1}},\ldots,\frac{(g\underline \omega)_{r}}{(g\underline \omega)_{r+1}} ) .
\ee
This action of $g$ is a linear fractional transformation, and we note that  $S$ together with $SL_{r+1}(\mathbb{Z})$ generates all of $SL_{r+2}(\mathbb{Z})$. With this machinery, the result of theorem \ref{thm:G2modularity} can be rewritten as
\be
G_r (z| \underline{\omega})
= \exp \left\{
\frac{2 \pi i}{(r+2)!} B_{r+2,r+2} (z|(\underline{\omega},-1))
\right\}  
 \times \prod_{i=0}^r ((S K_i)^*G_r)(z|\underline \omega) ,
 \ee
 where $(SK_i)^*$ acts on $G_r(z|\underline \omega)$ by transforming its arguments as above. The above formula include the modularity properties of $\theta_0$ and the elliptic gamma function studied by Felder and Varchenko in \cite{FV}, and is just a way of writing it that makes precise how $SL_{r+2}$ acts. We will see later how this directly generalizes for the generalized multiple elliptic gamma functions we will introduce. 
\end{remark}

We will also need the following property of $G_r$, which is a direct consequence of proposition \ref{prop:gluing}.
\begin{proposition} \label{prop:G2gluing}
	\be
		\frac{G_r ( z | \omega_0,\omega_1,\cdots,\omega_r)}{G_r(z|\omega_0,\omega_0+\omega_1,\cdots,\omega_r)}=\frac{1}{G_r ( z | -\omega_1,\omega_0+\omega_1,\cdots,\omega_r)} 
	\ee
\end{proposition}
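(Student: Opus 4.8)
The plan is to work from the second product form of the multiple elliptic gamma function in \eqref{13b}, namely
\[
G_r(z|\underline\omega) = \{(x^{-1}|\underline q^{-1})_\infty\}^{(-1)^{r+1}} \{(x|\underline q)_\infty\}^{(-1)^r},
\]
which exhibits $G_r$ as a product of two $q$-shifted factorials. Since Proposition \ref{prop:gluing} is exactly a gluing identity for such factorials under the replacement of $q_1$ by $q_0q_1$, I would apply it separately to each of the two factors and then reassemble, tracking the $(-1)^r$ exponents.

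Writing $\underline q = (q_0,q_1,q_2,\ldots,q_r)$ for $(\omega_0,\omega_1,\ldots,\omega_r)$, the three data sets in the statement correspond to $\underline q_A = \underline q$ (the left numerator), $\underline q_B = (q_0,q_0q_1,q_2,\ldots,q_r)$ (the denominator), and $\underline q_C = (q_1^{-1},q_0q_1,q_2,\ldots,q_r)$ (the right-hand side). First I would apply Proposition \ref{prop:gluing} directly to the factor $(x|\underline q)_\infty$, which gives
\[
\frac{(x|\underline q_A)_\infty}{(x|\underline q_B)_\infty}=(x|\underline q_C)_\infty^{-1}.
\]
Next I would apply the same proposition to the reflected factor $(x^{-1}|\underline q^{-1})_\infty$ with the substitution $x\mapsto x^{-1}$, $q_j\mapsto q_j^{-1}$. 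The key observation is that this substitution preserves the three roles: $(q_0^{-1},q_1^{-1},\ldots)$, $(q_0^{-1},q_0^{-1}q_1^{-1},\ldots)=(q_0^{-1},(q_0q_1)^{-1},\ldots)$ and $(q_1,(q_0q_1)^{-1},\ldots)$ are precisely $\underline q_A^{-1}$, $\underline q_B^{-1}$ and $\underline q_C^{-1}$, so that
\[
\frac{(x^{-1}|\underline q_A^{-1})_\infty}{(x^{-1}|\underline q_B^{-1})_\infty}=(x^{-1}|\underline q_C^{-1})_\infty^{-1}.
\]

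Finally I would form the ratio $G_r(z|\underline\omega_A)/G_r(z|\underline\omega_B)$ using \eqref{13b}, substitute the two gluing identities, and collect exponents. Raising each inverted factorial to the power $(-1)^{r+1}$ or $(-1)^r$ flips its exponent once, since $-(-1)^{r+1}=(-1)^r$ and $-(-1)^r=(-1)^{r+1}$; the result is $(x^{-1}|\underline q_C^{-1})_\infty^{(-1)^r}(x|\underline q_C)_\infty^{(-1)^{r+1}}$, which is exactly $1/G_r(z|\underline\omega_C)$. This yields the claimed identity. The only point requiring care—and the most likely source of a slip—is the bookkeeping of these $\pm1$ exponents when the inverse factorials produced by Proposition \ref{prop:gluing} are raised to the powers appearing in \eqref{13b}; the convergence and meromorphic-continuation subtleties are already settled case by case inside Proposition \ref{prop:gluing}, so no further analytic input is needed.
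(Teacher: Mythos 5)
Your proof is correct, and it reaches the identity by a cleaner route than the paper's. Both arguments rest on the same key lemma, Proposition \ref{prop:gluing}, applied to the two $q$-shifted factorials that make up $G_r$; the difference is which of the two equivalent product forms in \eqref{13b} you start from. The paper uses the first form, $(x^{-1}q_0\cdots q_r\,|\,\underline q)_\infty\{(x|\underline q)_\infty\}^{(-1)^r}$, where the argument of the first factor changes between numerator and denominator (since $q_0\cdots q_r$ becomes $q_0^2q_1\cdots q_r$); handling that factor then requires the shift identity from Proposition \ref{prop:qfactorialprop} in addition to the gluing lemma, and the computation lands on $G_r(z+\omega_1\,|\,\omega_1,\omega_0+\omega_1,\ldots,\omega_r)$, which must finally be converted to $G_r(z\,|\,-\omega_1,\omega_0+\omega_1,\ldots,\omega_r)^{-1}$ via the functional equation $G_r(z|\underline\omega)=1/G_r(z-\omega_j|\underline\omega[j])$. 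You instead use the inversion-symmetric form $\{(x^{-1}|\underline q^{-1})_\infty\}^{(-1)^{r+1}}\{(x|\underline q)_\infty\}^{(-1)^r}$, under which the substitution $x\mapsto x^{-1}$, $q_j\mapsto q_j^{-1}$ maps the triple $(\underline q_A,\underline q_B,\underline q_C)$ to $(\underline q_A^{-1},\underline q_B^{-1},\underline q_C^{-1})$, so Proposition \ref{prop:gluing} applies verbatim to both factors and the sign bookkeeping $-(-1)^{r+1}=(-1)^r$ assembles the answer directly, with no shift identities and no functional equation needed at the end. Your closing remark is also the right one: the only analytic content is already contained in the case-by-case proof of Proposition \ref{prop:gluing} (in particular one needs $\omega_0+\omega_1\notin\mathbb{R}$ so that $|q_0q_1|\neq 1$, exactly as in the paper's version), so nothing further is required.
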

\begin{proof}
One uses the definition of $G_r$ as well as proposition \ref{prop:gluing} as follows
\bea
		\frac{G_r ( z | \omega_0,\omega_1,\cdots,\omega_r)}{G_r(z|\omega_0,\omega_0+\omega_1,\cdots,\omega_r)} &=&
		 \frac{(x^{-1}q_0\cdots q_r | q_0,\cdots,q_r)_\infty (x | q_0,\cdots,q_r)_\infty^{(-1)^r}}{(x^{-1}q_0^2 q_1\cdots q_r | q_0, q_0 q_1,\cdots,q_r)_\infty (x | q_0,q_0 q_1,\cdots,q_r)_\infty^{(-1)^r}} \nn \\
		&=& \frac{(x^{-1}q_0\cdots q_r | q_0,\cdots,q_r)_\infty }{(x | q_1^{-1},q_0q_1,\cdots,q_r )_\infty^{(-1)^r}} \frac{ (x^{-1} q_0 q_1\cdots q_r | q_1,q_0q_1,\cdots,q_r)_\infty}{(x^{-1} q_0 q_1 \cdots q_r | q_0 , q_1, \cdots,q_r)_\infty} \nn \\
		&=& ( q_1 x | q_1,q_0 q_1,\cdots,q_r)_\infty^{(-1)^r} ( x^{-1} q_0q_1\cdots q_r | q_1,q_0q_1,\cdots q_r )_\infty \nn \\
		&=& G_r ( z + \omega_1 | \omega_1,\omega_0+\omega_1,\cdots,\omega_r) = G_r ( z | -\omega_1,\omega_0+\omega_1,\cdots,\omega_r)^{-1} . \nn
\eea
\end{proof}

\subsection{The multiple sine function}
Suppose that the points representing $\omega_1,\cdots,\omega_r \in \comp$ all lie  within the same half of the complex plane. Then the \emph{multiple zeta function} is defined by the series
\be
\zeta_r (s,z | \underline{\omega})
= \sum_{n_1,\dots,n_r=0}^\infty
\frac{1}{(n_1 \omega_1 + \cdots + n_r \omega_r +z)^s} ,
\ee
for $z \in \comp, \re s >r$.
This series is holomorphic in $s$ in the domain $\{ \re s >r \}$,
and it is analytically continued to $s \in \comp$.
Since it is  holomorphic at $s=0$,
we can define the \emph{multiple gamma function} by
\begin{equation}
\Gamma_r (z | \underline{\omega})
= \exp \left( \frac{\partial}{\partial s}
\zeta_r (s,z | \underline{\omega}) \Big|_{s=0} \right).
\label{30}\end{equation}
Now we define the multiple sine function by the form
\begin{equation}
S_r (z | \underline{\omega})
= \Gamma_r (z | \underline{\omega})^{-1}
\Gamma_r (|\underline{\omega}| -z | \underline{\omega})^{(-1)^r}.
\label{31}\end{equation}

The above definition of $\zeta_r (s,z | \underline{\omega})$ is
due to Barnes \cite{B}, and the definitions of
$\Gamma_r (z | \underline{\omega})$ and
$S_r (z | \underline{\omega})$ are due to Kurokawa \cite{K2}
and Jimbo-Miwa \cite{JM}.

Originally, the double sine function
$S_2 (z | \omega_1,\omega_2)$ have been studied to construct
solutions of certain equations
of mathematical physics as in \cite{JM}.  In recent years there has been renewed interest among physicist in the topic of generalized sine functions because of their appearance in the context of supersymmetric gauge theories. Namely, the result of a procedure called supersymmetric localization can be elegantly rewritten in terms of multiple sine functions \cite{Hama:2011,Lockhart:2012}. 

Multiple sine functions enjoy a number of properties that we list here.
\begin{proposition}[Properties of multiple sine function]\mbox{} \label{prop:S3properties}
\begin{itemize}
\item \textbf{Analyticity:}

For $ r $ odd the multiple sine is an entire function in $ z $, with zeros at
\[ z = \vec{n}\cdot\underline \omega\qquad (n_1,\dots,n_r \geq 1),\]
coming from $ \Gamma_r(|\underline\omega|-z|\underline\omega)^{-1} $, as well as zeros at 
\[ z = \vec{n}\cdot\underline \omega\qquad (n_1,\dots,n_r \leq 0),\]
coming from $ \Gamma_r(z|\underline\omega)^{-1} $. For even $ r $, the multiple sine is meromorphic with zeros for $ (n_1,\dots,n_r \geq 1)$ and poles for $ (n_1,\dots,n_r \leq 0)$;

\item \textbf{Difference equation:} 
\begin{equation} 
S_r(z+\omega_i |\underline\omega) = S_{r-1}(z|\underline\omega^-(i))^{-1}S_r(z|\underline \omega); 
\end{equation}

\item \textbf{Symmetries:} $S_r(z,\underline \omega)$ is invariant under permutations of the parameters $ \omega_i $. It also enjoys a reflection property:
\begin{align}S_r(z | \underline\omega) = S_r( |\underline\omega|-z | \underline\omega)^{(-1)^{r+1}};
\label{eq:reflection}\end{align}

\item \textbf{Rescaling invariance:}
\begin{equation} \label{eq:SrRescaling}
	S_r(c z | c\underline\omega) = S_r(z|\underline \omega),
\end{equation}
for any $ c\in \comp^* $;

\item \textbf{Factorization}:
 Let's set $x_k = e^{2\pi i z /\omega_k}$, $q_{jk} =e^{2\pi i \omega_j/\omega_k}$, $\underline{q_k} = (q_{1k},\dots, \widecheck{q_{kk}},\dots ,q_{rk})$ and $\underline{q_k}^{-1} = (q_{1k}^{-1},\dots, \widecheck{q^{-1}_{kk}},\dots ,q^{-1}_{rk})$.     If $r\geq2$, $\text{Im } \frac{\omega_j}{\omega_k} \neq 0$, then $S_r(z|\underline \omega)$ has the following factorization in terms of q-shifted factorials.
\be \label{eq:SrFactorization}
\begin{split} 
S_r (z|\underline{\omega})
&= \exp \left\{
(-1)^r \frac{\pi i}{r!} B_{rr} (z|\underline{\omega})
\right\}
\prod_{k=1}^{r} (x_k | \underline{q_k})_\infty \\
&= \exp \left\{
(-1)^{r-1} \frac{\pi i}{r!} B_{rr} (z|\underline{\omega})
\right\}
\prod_{k=1}^{r} (x_k^{-1} | \underline{q_k}^{-1})_\infty.
\end{split}
\ee
\end{itemize}
\end{proposition}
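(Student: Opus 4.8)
The plan is to reduce every item to a property of the Barnes multiple zeta function $\zeta_r(s,z|\underline\omega)$ and the multiple gamma function $\Gamma_r(z|\underline\omega)=\exp(\partial_s\zeta_r(s,z|\underline\omega)|_{s=0})$, through the defining relation \eqref{31}. Three of the items are then almost immediate. \emph{Symmetry} under permutations of the $\omega_i$ holds because the summand of $\zeta_r$ is symmetric in $\underline\omega$, hence so are $\Gamma_r$ and $S_r$. The \emph{reflection} $S_r(z|\underline\omega)=S_r(|\underline\omega|-z|\underline\omega)^{(-1)^{r+1}}$ is purely algebraic: substituting $z\mapsto|\underline\omega|-z$ in \eqref{31} swaps the two gamma factors, and raising to the power $(-1)^{r+1}$ reproduces \eqref{31} after collecting the signs $(-1)^{r+1}(-1)=(-1)^r$ and $(-1)^r(-1)^{r+1}=-1$. \emph{Rescaling} follows from homogeneity $\zeta_r(s,cz|c\underline\omega)=c^{-s}\zeta_r(s,z|\underline\omega)$, which gives $\Gamma_r(cz|c\underline\omega)=c^{-\zeta_r(0,z|\underline\omega)}\Gamma_r(z|\underline\omega)$; the two prefactors cancel in \eqref{31} precisely because of the special-value reflection $\zeta_r(0,z|\underline\omega)=(-1)^r\zeta_r(0,|\underline\omega|-z|\underline\omega)$, equivalently the identity $B_{rr}(|\underline\omega|-z|\underline\omega)=(-1)^rB_{rr}(z|\underline\omega)$ for the multiple Bernoulli polynomial.

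The \emph{difference equation} is the computational heart and I would isolate it first. Shifting $z\mapsto z+\omega_i$ and re-indexing the lattice sum removes exactly the slice $n_i=0$, giving $\zeta_r(s,z+\omega_i|\underline\omega)=\zeta_r(s,z|\underline\omega)-\zeta_{r-1}(s,z|\underline\omega^-(i))$; applying $\partial_s|_{s=0}$ and exponentiating yields $\Gamma_r(z+\omega_i|\underline\omega)=\Gamma_r(z|\underline\omega)\,\Gamma_{r-1}(z|\underline\omega^-(i))^{-1}$. Feeding this into both gamma factors of \eqref{31} (using $|\underline\omega^-(i)|=|\underline\omega|-\omega_i$ for the second one) collapses to $S_r(z+\omega_i|\underline\omega)=S_{r-1}(z|\underline\omega^-(i))^{-1}S_r(z|\underline\omega)$. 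For \emph{analyticity} I would read off the divisor of each factor from the known singularity structure of $\Gamma_r$: its only singularities sit at the cone points $z=\vec n\cdot\underline\omega$, $n_i\le 0$, and their nature alternates with the parity of $r$ (poles for $r$ odd, zeros for $r$ even, as one sees from iterating the difference equation, which inverts $\Gamma_{r-1}$). Combining $\Gamma_r(z|\underline\omega)^{-1}$ (divisor at $n_i\le 0$) with $\Gamma_r(|\underline\omega|-z|\underline\omega)^{(-1)^r}$ (divisor at $n_i\ge1$ after the reflection $z\mapsto|\underline\omega|-z$) gives the stated entire/meromorphic structure.

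The \emph{factorization} is the real obstacle and the only step that is not a short manipulation. I would prove it by a uniqueness argument built on the difference equation just established. Write $F_r(z)$ for the right-hand side of \eqref{eq:SrFactorization}. First I would check that $F_r$ obeys the same recursion $F_r(z+\omega_i|\underline\omega)=S_{r-1}(z|\underline\omega^-(i))^{-1}F_r(z|\underline\omega)$: under $z\mapsto z+\omega_i$ each $x_k$ is multiplied by $q_{ik}$, the factor $k=i$ is unchanged because $q_{ii}=1$, and Proposition \ref{prop:qfactorialprop} turns the remaining factors into the ratios $(x_k|\underline{q_k})_\infty/(x_k|\underline{q_k}^-(i))_\infty$, which by the inductive hypothesis (the $r-1$ case of the factorization) reassemble, together with the shift $B_{rr}(z+\omega_i)-B_{rr}(z)=r\,B_{r-1,r-1}(z|\underline\omega^-(i))$ of the Bernoulli prefactor, into exactly $S_{r-1}(z|\underline\omega^-(i))^{-1}$. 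Hence $F_r/S_r$ is invariant under every translation $z\mapsto z+\omega_i$; since $\im(\omega_j/\omega_k)\ne0$ makes two of the periods $\real$-independent, this ratio is elliptic with respect to a rank-two sublattice, with base case the classical product $S_1(z|\omega)=2\sin(\pi z/\omega)=F_1(z)$.

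The crux — and where I expect the bulk of the work — is to show that $F_r$ and $S_r$ have identical divisors, so that the elliptic, pole-free ratio is forced to be constant by Liouville, after which a single normalization (e.g.\ comparing the leading behaviour as $\im(z/\omega_k)\to+\infty$, or the value at a convenient point) pins the constant to $1$. Matching divisors means verifying that the zeros of $\prod_k(x_k|\underline{q_k})_\infty$, located at the appropriate translates of the period lattices, assemble exactly into the zero/pole pattern of $S_r$ from the analyticity item, with the Bernoulli exponential contributing no divisor; this bookkeeping, together with controlling the overall constant, is the main difficulty. An alternative, more computational route that sidesteps the divisor matching is the Mellin--Barnes/contour-integral representation of $\log\Gamma_r$ used by Narukawa \cite{Narukawa:2003}: pushing the contour across the double pole at $s=0$ produces the $B_{rr}$ term, while the residues at the poles $s\in 2\pi i\,\zahl/\omega_k$ resum, one period direction $\omega_k$ at a time, into $\log(x_k|\underline{q_k})_\infty$; here the hypothesis $\im(\omega_j/\omega_k)\ne0$ is what separates these pole families and keeps each $q_{jk}$ off the unit circle.
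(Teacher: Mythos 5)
The paper itself offers no proof of this proposition: it is imported verbatim from Narukawa \cite{Narukawa:2003} as review material, so your attempt can only be measured against Narukawa's argument. Your treatment of the symmetry, reflection, rescaling and difference-equation items is correct and is the standard derivation from \eqref{30}--\eqref{31} (including the identity $B_{rr}(|\underline\omega|-z|\underline\omega)=(-1)^rB_{rr}(z|\underline\omega)$ needed to cancel the $c^{-\zeta_r(0,\cdot)}$ prefactors), and your verification that the right-hand side of \eqref{eq:SrFactorization} satisfies the same difference equation as $S_r$ is a genuinely nice consistency check. But two steps do not hold up. First, the analyticity argument rests on a false claim: the nature of the singularities of $\Gamma_r$ does \emph{not} alternate with the parity of $r$. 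The recursion reads $\Gamma_r(z|\underline\omega)=\Gamma_{r-1}(z|\underline\omega^-(i))\,\Gamma_r(z+\omega_i|\underline\omega)$, with $\Gamma_{r-1}$ appearing to the power $+1$, so by induction from $\Gamma_0(z)=z^{-1}$ every $\Gamma_r$ has poles (and no zeros) on the non-positive cone. Carried through correctly, this gives for even $r$ zeros of $S_r$ at $n\le 0$ and poles at $n\ge 1$ --- which is also what the factorization formula itself forces (for $r=2$ the factor $(x_1|\underline{q_1})_\infty$ manifestly vanishes at $z=0$), and is the opposite of the statement as printed; your sign error happens to reproduce the printed claim, which should itself be double-checked against \cite{Narukawa:2003}.

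Second, and more seriously, the factorization --- the only nontrivial item --- is not proven. The Liouville strategy reduces the theorem to showing that the ratio $F_r/S_r$ is a nowhere-vanishing holomorphic doubly periodic function and then to evaluating the resulting constant, which a priori depends on $\underline\omega$; you explicitly defer both steps. The divisor matching is tedious but feasible, whereas pinning the constant to $1$ is where essentially all of the content of \eqref{eq:SrFactorization} resides (it is precisely the Bernoulli exponential), and no mechanism for doing so is supplied beyond a vague appeal to asymptotics. The one complete route you indicate --- the contour-integral representation of $\zeta_r'(0,z|\underline\omega)$, with the double pole at the origin producing $B_{rr}$ and the poles at $t\in 2\pi i\zahl/\omega_k$ resumming into $\log(x_k|\underline{q_k})_\infty$ --- is in fact Narukawa's actual proof, so as it stands the proposal establishes the easy items and only gestures at the hard one.
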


\section{Good cones}\label{sec:cones}
\begin{definition}
A strictly convex rational polyhedral cone $C \subset \BB{R}^n$ can be presented as 
\begin{equation}\label{cone}
C =\{r \in \BB{R}^n \,|\, r \cdot v_i \geq 0,\; i= 1,\dots, n  \} \subset \BB{R}^n , 
\end{equation}
where the $v_i$'s are also known as inward pointing normals of the $n$ faces of the cone. The rationality condition means that $v_i \in \BB{Z}^n$, and without loss of generality we assume that the $\{v_i\}$ are primitive, i.e. that $\gcd (v_i) = 1 \ \forall i$. 
\end{definition}
We also assume that the set $\{v_i\}$ is minimal, i.e. removing any of the $v_i$'s would change $C$. Note in particular that for $C$ 2-dimensional, this means that $C$ is specified by two normals. In the remainder of this paper we will refer to a strictly convex rational polyhedral cone just as a cone.

\begin{definition} Given a cone $C$, its \emph{dual cone} $C^*$ is defined by
\be
C^* = \{y \in \BB R^n | \;y\cdot u \geq 0 \;\textrm{for all}\; u \in C \} .
\ee
\end{definition}

For the purposes of this paper we assume the cone to be of the following type:

\begin{definition}
A cone $C$ is \emph{good}  if at every codimension $k$ face, the $k$-normals $v_{i_1}\dots v_{i_k}$ satisfy
\be
\Span_{\BB R}\langle v_{i_1}\dots v_{i_k}\rangle \;\cap \BB Z ^m = \Span_{\BB Z}\langle v_{i_1}\dots v_{i_k}\rangle .
\ee
\end{definition}  
It is easy to see that this condition is equivalent to the fact that $\{v_{i_1}\dots v_{i_k}\}$ can be completed into an $SL_m(\BB Z)$ matrix. The goodness condition was originally introduced by Lerman  \cite{Lerman:2001zua}  in the context of contact toric geometry. In this context, the cone appears as the image of the toric moment map, and the goodness condition corresponds to the toric space being a smooth manifold. 

\begin{definition}
A cone $C$ is said to be a \emph{1-Gorenstein cone} if there exists a primitive vector $\xi \in \BB Z^n$ such that 
\be
\xi \cdot v_i = 1\ ,
\ee
for each $i = 1,\dots, n$.
\end{definition}
In the toric geometry context, this corresponds to the associated toric manifold being Calabi-Yau.

For toric varieties, there is a well known result (given for example in section 2.6 of \cite{fulton}), that states that any toric variety has a refinement that resolves it's singularities. Because of the correspondence between cones and toric varieties, this can easily be translated into a statements about cones:
\begin{proposition}\label{prop:ConeSubdivision}
	Let $C$ be a $r$-dimensional cone. Then there exists a subdivision of $C$ into a finite set of smaller cones, $C = \bigcup_i C_i$, such that each subdivided cone $C_i$ has $r$ normals, forming a basis of $\mathbb{Z}^r$. 
\end{proposition}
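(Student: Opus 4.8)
The plan is to reduce the statement to the toric resolution result quoted just above, and then carry out the (elementary) translation between the ray-generator and the normal-vector descriptions of a smooth simplicial cone. First I would recall the dictionary: a strictly convex rational polyhedral cone $C \subset \mathbb{R}^r$, together with all of its faces, is a fan, and hence defines an affine toric variety. In the language of \cite{fulton}, smoothness of this variety is equivalent to $C$ being generated as a cone by part of a $\mathbb{Z}$-basis of $\mathbb{Z}^r$; since our $C$ is $r$-dimensional, this means $C$ is simplicial with ray generators $u_1,\dots,u_r$ forming a basis of $\mathbb{Z}^r$. The quoted theorem then provides a refinement $\Sigma$ of this fan consisting of finitely many cones, all contained in $C$, with $|\Sigma| = C$, and such that every cone of $\Sigma$ is smooth.

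Next I would single out the maximal, i.e. $r$-dimensional, cones $C_1,\dots,C_N$ of $\Sigma$. Since $\Sigma$ is finite this is a finite list, and since $|\Sigma| = C$ with every lower-dimensional cone a face of some maximal one, we obtain $C = \bigcup_i C_i$, the desired subdivision. Each $C_i$ is a smooth $r$-dimensional cone; smoothness forces it to be simplicial, hence to have exactly $r$ facets, with ray generators $u_1^{(i)},\dots,u_r^{(i)}$ forming a $\mathbb{Z}$-basis of $\mathbb{Z}^r$.

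Finally I would translate unimodularity from the generators to the normals, which is the one place a short computation is needed. For a smooth simplicial full-dimensional cone the primitive inward normals $v_1,\dots,v_r$ are precisely the dual basis of the generators, characterized by $v_a \cdot u_b = \delta_{ab}$ (indeed $v_a$ is orthogonal to the facet spanned by the $u_b$ with $b\neq a$, and unimodularity pins down $v_a\cdot u_a = 1$). Writing $U$ for the integer matrix with columns $u_b$, we have $U \in GL_r(\mathbb{Z})$ because the $u_b$ are a lattice basis, and the matrix whose rows are the normals is $(U^{-1})^{\mathsf{T}} \in GL_r(\mathbb{Z})$; hence the $v_a$ are again a $\mathbb{Z}$-basis of $\mathbb{Z}^r$. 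Thus each $C_i$ has exactly $r$ normals forming a basis, as required.

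I expect the only subtlety to be bookkeeping rather than a genuine obstacle: one must make sure the refinement is taken to be full, so that its top-dimensional cones genuinely cover $C$, and that the normals of a smooth cone are correctly identified with the dual basis of its generators. An alternative route, avoiding the citation, would be a direct induction on the index $[\mathbb{Z}^r : \mathbb{Z}\langle u_1,\dots,u_r\rangle]$ of a simplicial subdivision, performing repeated star subdivisions along suitable interior lattice points to lower this index to $1$; but since the toric statement has already been quoted, invoking it is the cleaner option.
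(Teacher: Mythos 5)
Your argument is correct and follows exactly the route the paper takes: the paper simply cites the toric resolution-of-singularities result from section 2.6 of Fulton and asserts that translating it into a statement about cones is easy, without writing out a proof. You have carried out that translation explicitly (maximal cones of the smooth refinement cover $C$, and the normals of a smooth full-dimensional cone are the dual basis of its ray generators, hence again a $\mathbb{Z}$-basis), and this bookkeeping is accurate.
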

In dimension 2, it is relatively straightforward to explicitly construct this subdivision, which we do in the following lemma. 
In higher dimensions, it is not in general easy to find such a subdivision. 
\begin{lemma} \label{lem:subdivision}
Let $W$ be a convex wedge inside $\mathbb{R}^2$, defined as $W = \{ x \in \mathbb{R}^2 : x\cdot v_1 \geq 0, x\cdot v_2 < 0 \}$ where the normal vectors $v_1,v_2 \in \mathbb{Z}^2$. Then any sum of the form 
\be
	f_W(\underline \omega) = \sum_{n\in W\cap \mathbb{Z}^2} f(n\cdot \underline \omega) ,
\ee
where $\underline \omega \in \mathbb{C}^2$ and $f:\mathbb{C} \rightarrow \mathbb{C}$ is such that the sum converges absolutely, can be written in the form 
\be
	f_W(\underline \omega) = \sum_{j=0}^n \sum_{m\in \mathbb{Z}^2_{\geq 0}} f( (m_1 + 1) ( \underline \omega \times u_j ) + m_2 (\underline \omega \times u_{j+1} ) ) ,
\ee
where $\{ u_i \}_{i=0}^{n+1}$ is a set of normals corresponding to lines that subdivide the wedge,such that $u_0 = v_1$ and $u_{n+1}=v_2$, and also $\det [ u_i,u_{i+1}] = u_i \times u_{i+1} = 1$. 
\end{lemma}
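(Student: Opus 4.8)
The plan is to reduce the statement to two independent ingredients: a unimodular subdivision of the wedge $W$ furnished by the normals $u_j$, and an elementary reparametrization of the lattice points of each sub-wedge. Throughout I write $a\times b = a_1 b_2 - a_2 b_1$ for $a,b\in\mathbb{R}^2$ and $a^\perp = (-a_2,a_1)$, so that $a^\perp\cdot b = a\times b$. Since the sum defining $f_W$ converges absolutely, once $W\cap\mathbb{Z}^2$ is written as a disjoint union over sub-wedges I may split and reorder the series freely; the whole content is therefore to produce the $u_j$ and to evaluate $n\cdot\underline\omega$ on each piece.

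First I would construct the chain $u_0=v_1,\dots,u_{n+1}=v_2$. Because $W$ is a strictly convex wedge I may assume the labelling is such that $\det[v_1,v_2]=D\ge 1$, the degenerate cases being trivial. The construction is by induction on $D$: if $D=1$ there is nothing to do and $n=0$. Otherwise $v_1$ primitive gives, by Bézout, a vector $u^*$ with $\det[v_1,u^*]=1$, and the general solution of $\det[v_1,u]=1$ is $u=u^*+kv_1$, along which $\det[u,v_2]=\det[u^*,v_2]+kD$ runs through a full residue class mod $D$. Writing $v_2=\alpha v_1+\beta u^*$ in the basis $\{v_1,u^*\}$ one finds $\beta=D$ and, $v_2$ being primitive, $\gcd(\alpha,\beta)=1$; hence for $D>1$ one can choose $k$ so that $1\le\det[u,v_2]<D$. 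Setting $u_1:=u$, which then lies angularly strictly between $v_1$ and $v_2$ since both $\det[v_1,u_1]=1$ and $\det[u_1,v_2]$ are positive, and recursing on the pair $(u_1,v_2)$ produces the desired chain with $\det[u_j,u_{j+1}]=1$ for all $j$.

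Next I would establish the partition $W=\bigsqcup_{j=0}^{n}W_j$ with $W_j=\{x:\ x\cdot u_j\ge 0,\ x\cdot u_{j+1}<0\}$. Applying the rotation $(\cdot)^\perp$ one checks that the edges of $W_j$ point along $-u_j^\perp$ and $-u_{j+1}^\perp$, and that $-u_{j+1}^\perp$ is simultaneously the closed edge of $W_{j+1}$ and the open edge of $W_j$; since $(\cdot)^\perp$ is a rigid rotation, the rays $-u_0^\perp,\dots,-u_{n+1}^\perp$ are angularly ordered, with $-u_0^\perp$ and $-u_{n+1}^\perp$ exactly the closed and open boundary rays of $W$. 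This matches the half-open conventions precisely, so the $W_j$ tile $W$ without overlap. Then on a fixed $W_j$ the pair $\{u_j,u_{j+1}\}$ is a $\mathbb{Z}$-basis, so $n\mapsto(n\cdot u_j,\ n\cdot u_{j+1})$ is a bijection $\mathbb{Z}^2\to\mathbb{Z}^2$ under which $W_j\cap\mathbb{Z}^2$ corresponds to $n\cdot u_j\in\mathbb{Z}_{\ge 0}$, $n\cdot u_{j+1}\in\mathbb{Z}_{\le -1}$; setting $m_2=n\cdot u_j$ and $m_1=-(n\cdot u_{j+1})-1$ gives a bijection with $\mathbb{Z}^2_{\ge 0}$. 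Expanding $n$ in the basis dual to $\{u_j,u_{j+1}\}$, namely $a=-u_{j+1}^\perp$, $b=u_j^\perp$, and using $a^\perp\cdot b=a\times b$ yields
\[
n\cdot\underline\omega=(n\cdot u_j)(\underline\omega\times u_{j+1})-(n\cdot u_{j+1})(\underline\omega\times u_j)=(m_1+1)(\underline\omega\times u_j)+m_2(\underline\omega\times u_{j+1}),
\]
which is precisely the claimed summand; summing over $j$ gives the lemma.

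The main obstacle I expect is the subdivision step: one must not only produce primitive normals with consecutive determinant $1$ (the two–dimensional toric resolution, i.e. the Hirzebruch–Jung algorithm), but also verify they are correctly ordered so that the half-open wedges $W_j$ genuinely partition $W$. The coprimality argument controlling the residue class of $\det[u,v_2]$ is what simultaneously guarantees termination of the induction and that each inserted normal falls strictly between $v_1$ and $v_2$; the subsequent lattice bookkeeping and cross-product identities are routine.
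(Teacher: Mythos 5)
Your proposal is correct and follows essentially the same route as the paper: a unimodular (continued-fraction/Zagier-style) subdivision of the wedge, a half-open partition into sub-wedges $W_j$, and an $SL_2(\mathbb{Z})$ change of lattice coordinates on each piece producing the summand $(m_1+1)(\underline\omega\times u_j)+m_2(\underline\omega\times u_{j+1})$. Your Bézout/induction-on-$\det[v_1,v_2]$ construction of the chain $\{u_j\}$ is just a coordinate-free rephrasing of the paper's algorithm (and you are, if anything, more careful about the half-open bookkeeping), so no substantive differences to report.
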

\begin{remark} Above, and in the rest of the paper, we adopt the notational shorthand $\det [ u,v] = u\times v$ when $u,v \in \mathbb{R}^2$. 
\end{remark}
\begin{proof}
We begin by giving an algorithm for finding the lines $\{ u_i \}$. 

We can assume that $v_1$ and $v_2$ are both primitive vectors, which means that there exist a $SL_2(\mathbb{Z})$ transformation that maps $v_1$ to $(0,1)$.  Because of the structure of the sum, and since it is absolutely convergent, one can reorganize it using $SL_2(\mathbb{Z})$ transformations. So without loss of generality, we assume that $v_1 = (0,1)$, and we consider the case when $v_2 = (-a,b)$, with $a,b > 0$ and $\gcd(a,b)=1$. The other possibilities for $v_2$ can be handled in a similar way and the same results holds. For two coprime integers, we can always find an integer solution to $a c - b d = 1$ where $a>d, b>c$. This is essentially the chinese remainder theorem and is easily proven. Geometrically, the line with the normal vector $u_N = (-d,c)$ will then have a slope less than the line specified by $v_2$, so it will divide the wedge in two parts, and that $c,d$ solves the equation means that the this new line and $v_2$ forms an $SL_2(\mathbb{Z})$ basis, i.e. $u_N\times v_2 = 1$. We can then continue the subdivision process, now considering the wedge between $u_n$ and $v_1$, and so on. This will generate $N$ wedges, and the key point is that the process will end in finitely many steps since the involved integers are getting smaller at every step. The process stops when we reach the line with normal $v_1$ and thus have a complete subdivision of the wedge. This is equivalent to Zagier's construction \cite{Z}, which uses continued fractions to achieve the same goal. 

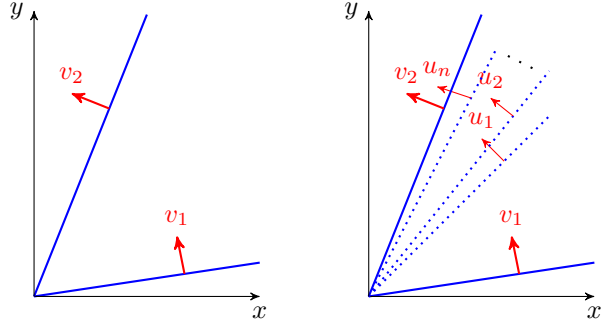
\begin{figure} \label{fig:2dconeSubdivision}

\begin{tikzpicture}[scale=1.]
\draw[thick,-,blue](0,0)-- (3,0.45);
\draw[->,black](0,0) -- (3,0) node[below]{\small $x$};
\draw[thick, ->,red] (2,0.3)-- (2-0.1,0.8) node[above] {\small$v_1$};

\draw[thick,-,blue] (0,0) -- (1.5, 3.75);
\draw[->,black](0,0) -- (0,3.8) node[left]{\small $y$};
\draw[thick, ->,red] (1, 2.5) -- (1-5*0.1,2.5+2*0.1)node[above] {\small$v_2$} ; 
\end{tikzpicture} \ \ \ \ \ 
\begin{tikzpicture}[scale=1.]

\draw[thick,-,blue](0,0)-- (3,0.45);
\draw[->,black](0,0) -- (3,0) node[below]{\small $x$};
\draw[thick, ->,red] (2,0.3)-- (2-0.1,0.8) node[above] {\small$v_1$};

\draw[thick, dotted,blue](0,0)-- (2.4,2.4);
\draw[->,red] (1.8, 1.8) -- (1.8 - 0.3 ,1.8+ 0.3 ) node[above] {\small$u_1$};

\draw[thick,dotted,blue](0,0)-- (2.4, 3);
\draw[->,red] (1.6*1.2, 1.6*1.5)-- (1.6*1.2 - 0.1 * 3, 1.6*1.5 + 0.1*2.4) node[above] {\small$u_2$};

\draw[thick,dotted,blue](0,0)-- (1.7, 3.3);
\draw[ ->,red] (1.6*0.85, 1.6*1.65)-- (1.6*0.85 - 5*0.09 , 1.6*1.65+2*0.075) node[above] {\small$u_n$};

\draw[thick,loosely dotted, black] ( 1.85, 3.2) -- (2.25, 3.0);

\draw[thick,-,blue] (0,0) -- (1.5, 3.75);
\draw[->,black](0,0) -- (0,3.8) node[left]{\small $y$};
\draw[thick, ->,red] (1, 2.5) -- (1-5*0.1,2.5+2*0.1)node[above] {\small$v_2$} ;

\end{tikzpicture}

\caption{The left picture shows a wedge, and the right shows its subdivision into smaller $SL_2(\mathbb{Z})$ wedges, i.e. wedges where the normals satisfy $u_i \times u_{i+1} = 1$. }
\label{fig:vertexgluing} 
\end{figure}

This algorithm gives the set of normals $\{ u_i \}$. We set $u_0=v_1$ and $v_2=u_{n+1}$, the normals satisfy
\[
	\det [ u_i,u_{i+1} ] =  u_i \times u_{i+1} =  1 .
\]
This statement means that two pairwise following lines form an $SL_2(\mathbb{Z})$ basis. The sum $f_W (\omega)$ can be decomposed as a sum over each of these $n+1$ smaller wedges 
\[
	f_W(\underline \omega) = \sum_{j=0}^n \sum_{m \in W_j\cap\mathbb{Z}^2} f(m \cdot\underline\omega).
\]
Since for each wedge we are excluding the upper edge and including the lower edge, every lattice point is counted exactly once. Each of the smaller wedges can be mapped to a region over which the sum can be more easily performed, by applying the $SL_2(\mathbb{Z})$ transformation $ A_i = [ u_i,u_{i+1}]^{-1}$ to them, sending each wedge to the space $X = \{ x,y \in \mathbb{R} | x\geq 0, y< 0 \}$. This corresponds to a rearrangement of the sum, which is allowed and doesn't change its value due to absolute convergence. So we have 
\[ \begin{split}
	\sum_{m \in W_j \cap \mathbb{Z}^2 } f(m \cdot \underline\omega) &= \sum_{ m \in X \cap \mathbb{Z}^2 } f ( m \cdot (A_j\underline\omega)) = \sum_{m_1 = 0}^{\infty} \sum_{m_2 = -1}^{-\infty} f( m_1(\underline \omega \times u_{j+1} ) + m_2 (-\underline\omega \times u_{j} ) ) \\
	&=\sum_{m_1 = 0}^{\infty} \sum_{m_2 = 0}^{\infty} f( m_1(\underline \omega \times u_{j+1} ) + (m_2+1) (\underline\omega \times u_{j} ) ),
	\end{split}
\]
where we work out $A_j \underline \omega$ and recognize $\underline \omega \times u_j$, and lastly shift $m_2$ to get both summations starting from 0. This proves the lemma. 
\end{proof}
 
\subsection{Good cones and modularity} \label{sec:ConesModularity}
A good cone of dimension $r > 2$ can be seen as a way to ``label'' a set of $SL_{r+1}(\mathbb{Z})$ elements, which we describe in this section. 
Let $C$ be an $r$-dimensional good cone, with $N$ 1-dimensional faces, generated by $x_1, \ldots, x_N$. At each such face $f$, there will be $r-1$ normals of the cone, which we call $v_1^f ,\ldots,v_{r-1}^f$. We choose an ordering of these normals such that 
\[
	\det [ x_f, v_1^f ,\ldots, v_{r-1}^f ] > 0.
\]
By goodness of the cone we can also always find an $n^f$ such that 
\[
	\det [ n^f ,v_1^f ,\ldots, v_{r-1}^f ] = 1,
\]
which together with the above ordering guarantees that $x_f \cdot n^f > 0 $. 
Let us define $\tilde K_f = [ n^f, v_1^f,\ldots,v_{r-1}^f]^{-1}$, which is an $SL_r (\mathbb{Z})$ matrix. We embed this into $SL_{r+1}(\mathbb{Z})$ through
\[
	K_f = \begin{pmatrix}
		\tilde K_f & 0 \\
		0 & 1 
	\end{pmatrix},
\]
for reasons that will become clear later. Let us also denote by $S$ the S-duality element in $SL_{r+1}(\mathbb{Z})$, given explicitly in equation \eqref{eq:Smatrix}. 
Next, consider the group action of $g\in SL_{r+1} (\mathbb{Z})$ on $(z | \omega_1,\ldots,\omega_r)$ as given in equation \eqref{eq:groupaction}, 
i.e. as a linear fractional transformation.  $S$ together with $SL_{r}(\mathbb{Z})$ generates all of $SL_{r+1}(\mathbb{Z})$. So we can think of $C$ as specifying a set of $N$ $SL_{r+1}(\mathbb{Z})$ matrices, and we will see later how these matrices parametrizes the modular properties of the generalized elliptic gamma functions, as well as the infinite product representation of generalized multiple sine. Of course, the above prescription does not uniquely determine our matrices $K_f$, since there is freedom in how to choose the vector $n^f$, however one can easily check that different choices of $n^f$ only changes the parameters in $(SK_f)\cdot ( z | \underline \omega )$ by integer shifts, leaving the value of multiple elliptic gamma functions or the multiple q-factorials unchanged.

We remark that the $r=2$ case we excluded above is special, since for a 2d cone, a 1d face has only one associated normal vector $v^f$. So in this case we have no freedom in choosing an ordering, and are not guaranteed that $\det [ x_f, v^f ] > 0$. We still require that $n^f \cdot x_f > 0$, so if $\det [x_f,v^f]<0$ we will have $\det [ n^f , v^f ] = -1$.

Further, we observe that for the standard cones $\mathbb{R}^r_{\geq 0}$, following this prescription, one gets the $S$-duality matrix composed with a permutation map, of even degree (i.e. with only $+1$ as non-zero entries). So this matches precisely what is described in theorem \ref{thm:G2modularity}. 

\section{Generalized Bernoulli polynomials} \label{sec:genBernoulli}
The generating function of the original multiple Bernoulli polynomials as seen in \eqref{eq:BernoulliExp} can be written as the following expansion:
\be
	\frac{t^r e^{zt}}{\prod_{i=1}^r ( e^{\omega_i t } - 1 ) } = t^r e^{zt} \sum_{n\in \mathbb{Z}^r_{>0} } e^{-(\underline \omega \cdot n )t } , 
\ee
which converges as long as Re $\omega_i > 0$ and $t>0$. We define the \emph{generalized Bernoulli polynomials} $B_{r,n}^C$ associated to a good cone $C$ of dimension $r$ from the following generating function:
\be \label{eq:genBernoulliDef}
	t^r e^{zt} \sum_{n\in C^\circ \cap\mathbb{Z}^r} e^{-(\underline \omega \cdot n) t } = \sum_{n=0}^\infty B_{r,n}^C ( z | \underline \omega ) \frac{t^n}{n!} , 
\ee
if there exists a $c\in\comp^*$ such that $\re ( c \underline \omega ) \in (C^*)^\circ$, and for $t \in \comp$ such that the sum on the left hand side absolutely converges. 
Here $(C^*)^\circ$ denotes the interior of $C^*$. 
These polynomials will make an appearance in the following sections where we prove properties of the generalized multiple sine and elliptic gamma functions, just as the original Bernoulli polynomials appears for the normal $S_r, G_r$ functions. In particular $B_{r,r}^C$ will be a polynomial in $z$ of order $r$, just as the normal $B_{r,r}$. 

As an example, for $r=2$, we can apply the lemma \ref{lem:subdivision} to the definition and show the following:
\begin{example}
Let $C$ be a 2d cone with normals $u_0,u_{n+1}$, then
\be \label{eq:B22C}
	B_{2,2}^C ( z | \underline \omega ) = B_{2,2}(z|\underline \omega \times u_n,\underline\omega\times u_{n+1}) + \sum_{j=0}^{n-1} B_{2,2} (z+\underline \omega \times u_j | \underline \omega \times u_j,\underline \omega \times u_{j+1} ) , 
\ee	
where $\{u_j\}_{j=0}^{n+1}$ is the set of vectors subdividing $C$, as described in lemma \ref{lem:subdivision}.  
This is a straight forward application of lemma \ref{lem:subdivision}, where one treats the last term separately, removing its shift, so that all integers inside $C$ are included. 
\end{example}

The leading order term of $B_{r,r}^C$ for $r=2$ ($r=3$), i.e. the $z^2$ ($z^3$) term, is proportional to the sum of the lengths of the sides of the cone (the area of the cone) capped of by the line (plane) defined by $y\cdot \underline \omega = 1$. 
Computing the coefficient of a particular generalized Bernoulli polynomial is similar to the Ehrhart problem of counting points in $\Delta^k$, where $\Delta^k$ is the polyhedron formed by cutting of the cone by a face given by $y\cdot R = k$, where $R$ is also a rational vector \cite{Ehrhart1968}.

\section{Generalized multiple sine functions }\label{sec:generalizedsine}
Let $C$ be a good cone of dimension $r \geq 2$ as defined in section \ref{sec:cones}, and assume that $\underline \omega \in \mathbb{C}^r$ is such that for some $c\in\mathbb{C}^*$, we have $\mathrm{Re} \  c\underline \omega \in (C^*)^\circ$. 
Then we define the associated generalized multiple zeta function by the series 
\be \label{eq:zetaCdef}
	\zeta_r^C (s, z | \underline \omega ) = \sum_{ n \in C \cap \mathbb{Z}^r } \frac{ 1 } { (z + n\cdot \underline \omega )^s },
\ee
for $z \in \mathbb{C}$, $\mathrm{Re}\  s > r$. The sum is absolutely convergent in this region and the function is holomorphic in $s$ for the domain $\{ \mathrm{Re} \ s > r \}$. 
Of course for the standard cone corresponding to $\mathbb{Z}^r_{\geq 0}$ the generalized multiple zeta function becomes exactly the usual Barnes zeta function, and the requirement on $\underline \omega$ in this case means that all the points representing $\omega_1,\ldots,\omega_r$ lie on the same side of some straight line through the origin. 

The analytic properties of $\zeta^C_r$ are established trough the following result.
\begin{proposition}
Let $C$ be a cone of dimension $r$. It is always possible to write
\be
	\zeta_r^C ( s, z | \underline \omega ) = \sum_{d=1}^r \sum_{i = 0 }^{N_d} n_{d,i} \zeta_r ( s, z | \underline \omega_{d,i} ) ,
\ee
where $n_{d,i}$ are integers with $n_{r,i}=1 \ \forall i$, $\underline \omega_{d,i} \in \comp^d$ and $\zeta_0 ( s,z) = z^{-s}$. 
\end{proposition}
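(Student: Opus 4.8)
The plan is to reduce the lattice sum defining $\zeta_r^C$ in \eqref{eq:zetaCdef} to a $\mathbb{Z}$-linear combination of standard Barnes sums by resolving $C$ into unimodular cones and then carefully accounting for the lattice points on the shared boundaries. First I would invoke Proposition \ref{prop:ConeSubdivision} to produce a subdivision $C=\bigcup_i \sigma_i$ into $r$-dimensional cones, each $\sigma_i$ having its $r$ normals — equivalently its $r$ primitive ray generators $w^{\sigma_i}_1,\dots,w^{\sigma_i}_r$ — forming a $\mathbb{Z}$-basis of $\mathbb{Z}^r$. Together with all their faces these cones form a fan $\Sigma$ refining $C$, and every face $\tau\in\Sigma$ of dimension $d$ is again unimodular, with primitive generators $w^\tau_1,\dots,w^\tau_d$ that extend to a $\mathbb{Z}$-basis.

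The key simplification is that $C\cap\mathbb{Z}^r$ is the disjoint union $\bigsqcup_{\tau\in\Sigma}(\mathrm{relint}\,\tau\cap\mathbb{Z}^r)$ over the relative interiors of all faces, so that
\[
\zeta_r^C(s,z|\underline\omega)=\sum_{\tau\in\Sigma}\ \sum_{n\in\mathrm{relint}\,\tau\cap\mathbb{Z}^r}\frac{1}{(z+n\cdot\underline\omega)^s},
\]
with each lattice point counted exactly once and no inclusion–exclusion needed at this stage. For a $d$-dimensional face $\tau$ I would change variables by $n=\sum_{j=1}^d a_j w^\tau_j$ with $a_j\in\mathbb{Z}_{>0}$; this is a bijection onto $\mathrm{relint}\,\tau\cap\mathbb{Z}^r$ precisely because the $w^\tau_j$ form part of a $\mathbb{Z}$-basis, and it gives $n\cdot\underline\omega=\sum_j a_j\,\omega^\tau_j$ with $\omega^\tau_j:=w^\tau_j\cdot\underline\omega$. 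Since each $w^\tau_j\in C$, the hypothesis $\mathrm{Re}(c\underline\omega)\in(C^*)^\circ$ yields $\mathrm{Re}(c\,\omega^\tau_j)=w^\tau_j\cdot\mathrm{Re}(c\underline\omega)>0$, so the $\omega^\tau_j$ lie in a common open half-plane and the resulting $d$-dimensional sum is a convergent shifted Barnes-type sum over $\mathbb{Z}^d_{>0}$.

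It then remains to rewrite the open-orthant sum in terms of the closed-orthant Barnes function $\zeta_d$. Splitting $\mathbb{Z}^d_{\geq 0}$ according to which coordinates are strictly positive and performing Möbius inversion on the Boolean lattice gives
\[
\sum_{a\in\mathbb{Z}^d_{>0}}\frac{1}{(z+a\cdot\underline\omega^\tau)^s}=\sum_{S\subseteq\{1,\dots,d\}}(-1)^{d-|S|}\,\zeta_{|S|}\!\left(s,z\,\big|\,(\omega^\tau_j)_{j\in S}\right),
\]
with integer coefficients $(-1)^{d-|S|}$ and with the convention $\zeta_0(s,z)=z^{-s}$ for the empty set (the relative interior of the origin). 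Substituting this for every face and collecting terms produces the claimed expression $\sum_{d}\sum_i n_{d,i}\,\zeta_d(s,z|\underline\omega_{d,i})$ with all $n_{d,i}\in\mathbb{Z}$. The coefficient of each top-dimensional term is $+1$: a full cone $\sigma_i$ contributes $\zeta_r(s,z|\underline\omega^{\sigma_i})$ only through the $S=\{1,\dots,r\}$ term of its own inversion (coefficient $(-1)^0=1$), and distinct top cones carry distinct parameter tuples, so no cancellation occurs; hence $n_{r,i}=1$. Throughout, the identity is first established on $\{\mathrm{Re}\,s>r\}$ where every sum converges absolutely, and then extends to all $s$ by analytic continuation.

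The main obstacle is the boundary bookkeeping: one must be certain that each lattice point of $C$ is accounted for exactly once and with an integer weight, and in particular that the top-cell coefficients come out equal to $1$. I expect the cleanest route is exactly the disjoint relative-interior decomposition above, which makes the counting manifest and pushes all of the genuine combinatorics into the elementary Möbius inversion converting open orthants to closed ones; the remaining care is simply to check that every lower-dimensional $\zeta_d$ that appears inherits the half-plane convergence condition — which it does, since its parameters are $w^\tau_j\cdot\underline\omega$ with $w^\tau_j\in C$ — so that each term is a genuine Barnes zeta function admitting analytic continuation.
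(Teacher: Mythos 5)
Your argument is correct, and it reaches the same conclusion by a genuinely different route than the paper. The paper proceeds by induction on $r$: it sums over the closed maximal subcones $C_i$ of the unimodular subdivision (each giving an ordinary $\zeta_r(s,z|A_i\underline\omega)$ after an $SL_r(\mathbb{Z})$ change of variables), observes that lattice points on shared faces are overcounted, and compensates by subtracting generalized zeta functions $\zeta_d^{c_{d,i}}$ attached to lower-dimensional overlap cones with integer coefficients that are never made explicit; the induction hypothesis then resolves those. You instead avoid induction entirely by decomposing $C\cap\mathbb{Z}^r$ as the \emph{disjoint} union of relative interiors of all faces of the fan, parametrizing each unimodular face by $\mathbb{Z}^d_{>0}$, and converting open orthants to closed ones by M\"obius inversion on the Boolean lattice, which yields the explicit coefficients $(-1)^{d-|S|}$. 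Your version buys explicitness and makes the claim $n_{r,i}=1$ and the integrality of all $n_{d,i}$ manifest, and it also checks the half-plane convergence condition for every lower-dimensional term (which the paper leaves implicit); the paper's version is closer in spirit to the explicit $r=2,3$ manipulations used later in the text, where shifted closed-orthant sums over subdivided wedges appear directly. Both arguments rest on the same inputs: Proposition \ref{prop:ConeSubdivision} (and the fact that the resulting subdivision, with all its faces, is a fan), absolute convergence for $\mathrm{Re}\,s>r$ to justify rearrangement, and analytic continuation at the end.
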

\begin{proof}
	We prove this by induction. The case $r=0$ is trivial, and also the $r=1$ one, since there is essentially only one cone in $\mathbb{R}$. 
			
	For $r > 1$, the argument relies on proposition \ref{prop:ConeSubdivision}, which guarantees that there always exists a subdivision of $C$ into a set of subcones $\{ C_i \}$, such that their normals form a basis of $\mathbb{Z}^r$. 
	Now, since the sum in equation \eqref{eq:zetaCdef} is absolutely convergent (for $\re s > r$)
	we can rearrange the terms of the sum without changing its value.
	 In particular this means that we can perform the sum by summing over each of the $C_i$'s and then adding up the result. 
	 In this way we will count the lattice points that lie within more than one of the $C_i$'s more than once, for which we need to compensate. 
	 The overlap between different $C_i$ cones will be given by a collection by lower dimensional cones $c_{d,i}$, with $d=0,\ldots,r-1$. A particular lower dimensional cone might overlap with several other cones, and in particular all the cones will of course overlap at the origin, $n = (0,\ldots,0)$. 
	\be
		\zeta_r^C ( s,z|\underline \omega) = \sum_i \sum_{n \in C_i \cap \mathbb{Z}^r } \frac{1}{(z+ n \cdot \underline \omega)^s } + \sum_{d=0}^{r-1} \sum_{i}^{N_d} \sum_{m \in c_{d,i}\cap\mathbb{Z}^r} n_{d,i} \frac{1}{(z+m\cdot \underline\omega)^s} .
	\ee
	Next, since all the $C_i$'s have normals that constitute a basis of $\mathbb{Z}^r$, we can use a $SL_r(\mathbb{Z})$ transformation to ``rearrange'' the sum over each $C_i$ into a sum over $\mathbb{R}^r_{\geq 0} \cap \mathbb{Z}^r = \mathbb{Z}^r_{\geq 0}$, i.e. 
	\[
	\sum_{n \in C_i \cap \mathbb{Z}^r } \frac{1}{(z+ n \cdot \underline \omega)^s } = \sum_{A_i^{-1} n \in A_i C \cap \mathbb{Z}^r } \frac{1}{(z+ (A_i^{-1} n) \cdot A_i \underline \omega)^s } =\zeta_r ( s ,z | A_i \underline \omega ) 	, 
	\]
	where $A_i \in SL_r(\mathbb{Z})$ and we recognize the ordinary $\zeta_r$. 
	 It is also clear that the sum over each lower dimensional cone $c_{d,i}$ in $\mathbb{R}^r$ can be written as a sum over lattice points inside a cone $\tilde c_{d,i} \subset \mathbb{R}^d$ , with an appropriate change of parameters $\underline \omega \rightarrow \underline \omega_{d,i}$, so we have that
	 \be
	 	\zeta_r^C ( z | \underline \omega ) = \sum_i \zeta_r ( s, z | A_i \underline \omega) + \sum_{d=0}^{r-1} \sum_{i}^{N_d} n_{d,i}\zeta^{c_{d,i}}_d ( s,z|\underline \omega_{d,i} ),
	 \ee	
	 from which the statement of the proposition follow by induction.
\end{proof}

This result is very similar in spirit to lemma \ref{lem:subdivision}, which gives us the explicit prescription for $r=2$. 
For a general cone in higher dimensions, finding explicitly the subdivision and working out a correct prescription for computing $\zeta_r^C$ is in general a difficult problem. 
However even without a general algorithm, we observe that the above result means that $\zeta_r^C$ always can be analytically continued to any $s \in \comp$ by analytic continuation of $\zeta_r$. 
And since analytic continuation is unique, the function we thus obtain does not depend on the subdivision, and one can use it to define the generalized multiple gamma function as follows:

\begin{definition} 
Given a cone $C$ the \emph{generalized multiple gamma function} associated to the cone is defined as
\be
	\Gamma_r^C ( z | \underline \omega ) = \exp \left ( \frac{\partial}{\partial s } \zeta^C_r ( s,z|\underline \omega ) \big |_{s=0} \right ) .
\ee
\end{definition}
We also define in the same way the functions $\zeta_r^{C^{\circ}}$ and $\Gamma_r^{C^\circ}$, but where the sum now is over the interior of the cone. 
\begin{definition} 
Given a cone $C$ the \emph{generalized multiple sine function} associated to the cone is defined as 
\be
	S_r^C ( z | \underline \omega ) = \Gamma_r^C ( z | \underline \omega )^{-1} \Gamma_r^{C^\circ} ( - z | \underline \omega )^{(-1)^r}. 
\ee
\end{definition}
This generalizes the definition of $S_r$ first given by Kurokawa \cite{K2}. Note that for the standard cone given by $\mathbb{Z}^r_{\geq 0}$, this definition matches the ordinary $S_r$ exactly, since the restriction to the interior of the cone corresponds to a shift of $z$ by $|\underline \omega | = \sum_i^r \omega_i$. 

\subsection{Properties of generalized multiple sine function $S_r^C$}
\begin{proposition}[Rescaling invariance]
Let $C$ be either a good 2d cone ($r=2$), or a good 1-Gorenstein 3d cone ($r=3$). Then we have 
\be
	S_r^C ( cz|c\underline \omega) = S_r^C ( z | \underline \omega ),
\ee
for $c\in\comp^*$. 
\end{proposition}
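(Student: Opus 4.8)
The plan is to reduce the claimed scale invariance to a single identity about the special value $\zeta_r^C(0,\cdot\,|\underline\omega)$, and then to establish that identity from the generating function of the generalized Bernoulli polynomials together with cone (Ehrhart--Macdonald) reciprocity. First I would exploit the homogeneity of the defining series. For $\re s > r$ one has, term by term, $(cz + n\cdot c\underline\omega)^{-s} = c^{-s}(z + n\cdot\underline\omega)^{-s}$; the hypothesis $\re(c'\underline\omega)\in(C^*)^\circ$ forces all the numbers $z + n\cdot\underline\omega$ with $n\in C\cap\mathbb{Z}^r$ into a common half-plane, so a single branch of the logarithm can be fixed and the factorization is legitimate with $c^{-s} = e^{-s\log c}$ for a fixed $\log c$. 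Hence $\zeta_r^C(s,cz|c\underline\omega) = c^{-s}\zeta_r^C(s,z|\underline\omega)$ on $\re s > r$, and this persists under analytic continuation in $s$. Differentiating at $s=0$ gives $\Gamma_r^C(cz|c\underline\omega) = c^{-\zeta_r^C(0,z|\underline\omega)}\,\Gamma_r^C(z|\underline\omega)$, and the same computation for the interior sum yields $\Gamma_r^{C^\circ}(-cz|c\underline\omega) = c^{-\zeta_r^{C^\circ}(0,-z|\underline\omega)}\,\Gamma_r^{C^\circ}(-z|\underline\omega)$. Substituting these into the definition of $S_r^C$ collapses all the $\Gamma$-factors up to the scalar $c^{A}$ with $A = \zeta_r^C(0,z|\underline\omega) - (-1)^r\,\zeta_r^{C^\circ}(0,-z|\underline\omega)$, so the proposition is equivalent to proving $A = 0$.

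Next I would compute both special values through the Mellin transform. Writing $\sigma_C(t) = \sum_{n\in C\cap\mathbb{Z}^r} e^{-(n\cdot\underline\omega)t}$ and similarly $\sigma_{C^\circ}(t)$, the representation $\zeta_r^C(s,z) = \Gamma(s)^{-1}\int_0^\infty t^{s-1} e^{-zt}\sigma_C(t)\,dt$ identifies $\zeta_r^C(0,z)$ with the $t^0$-coefficient of $e^{-zt}\sigma_C(t)$ in its small-$t$ expansion, and likewise $\zeta_r^{C^\circ}(0,-z)$ with the $t^0$-coefficient of $e^{zt}\sigma_{C^\circ}(t)$. By the definition of the generalized Bernoulli polynomials, $t^r e^{zt}\sigma_{C^\circ}(t) = \sum_n B_{r,n}^C(z|\underline\omega)\,t^n/n!$, so $\zeta_r^{C^\circ}(0,-z) = B_{r,r}^C(z|\underline\omega)/r!$. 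The cone reciprocity relation $\sigma_C(-t) = (-1)^r\,\sigma_{C^\circ}(t)$, valid for a pointed rational cone, then rewrites $e^{-zt}\sigma_C(t) = (-1)^r\,[e^{zt}\sigma_{C^\circ}(t)]\big|_{t\to -t}$, and since $t\to -t$ leaves the $t^0$-coefficient unchanged, $\zeta_r^C(0,z) = (-1)^r B_{r,r}^C(z|\underline\omega)/r!$. Combining the two evaluations gives $A = (-1)^r B_{r,r}^C(z|\underline\omega)/r! - (-1)^r B_{r,r}^C(z|\underline\omega)/r! = 0$, which is the assertion.

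The step I expect to be the main obstacle is making the $s=0$ evaluation rigorous for a general good cone: the Mellin argument requires controlling the small-$t$ asymptotics of the cone sum $\sigma_C(t)$, equivalently showing that $B_{r,r}^C$ is a genuine degree-$r$ polynomial and that the reciprocity relation survives the analytic continuation, while the boundary strata of $C$ contribute lower-dimensional pieces that must be tracked. This is precisely where the dimensional hypothesis enters. For $r=2$ the explicit subdivision of Lemma \ref{lem:subdivision} reduces everything to standard wedges and to the known special value for the ordinary Barnes zeta, so the reciprocity can be verified directly; for $r=3$ the $1$-Gorenstein hypothesis supplies a comparably controlled subdivision into unimodular subcones on which the same reduction applies. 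A secondary point requiring care is the branch bookkeeping for $c^{-s}$ and $c^{-\zeta(0)}$: one must use a single determination of $\log c$ throughout the argument, so that the prefactors cancel exactly rather than only up to a root of unity.
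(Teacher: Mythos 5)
Your argument is correct in substance, but it takes a genuinely different route from the paper. The paper's proof is a two-line corollary of the factorization theorems: in the course of proving Theorems \ref{thm:S2Cfactorization} and \ref{thm:S3Cfactorization} one obtains $S_r^C$ as a \emph{finite} product of ordinary multiple sines (equations \eqref{eq:S2CinS2} and \eqref{eq:S3CinS3}) whose arguments are all linear in $(z,\underline\omega)$, so the rescaling invariance \eqref{eq:SrRescaling} of each factor gives the result immediately; this is also why the hypotheses are exactly ``good 2d'' and ``good 1-Gorenstein 3d'' -- those are the cases where the paper has the finite-product representation. You instead reduce the claim to the single special-value identity $\zeta_r^C(0,z|\underline\omega)=(-1)^r\zeta_r^{C^\circ}(0,-z|\underline\omega)$ via the homogeneity $\zeta_r^C(s,cz|c\underline\omega)=c^{-s}\zeta_r^C(s,z|\underline\omega)$, and then prove it by identifying both sides with $(-1)^r B_{r,r}^C(z|\underline\omega)/r!$ using the Mellin/Laurent-coefficient description of $\zeta(0)$ and Stanley-type reciprocity $\sigma_C(-t)=(-1)^r\sigma_{C^\circ}(t)$. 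Your computation of the exponent $A$ and of the two constant terms checks out, and the branch bookkeeping for $c^{-s}$ is the same issue as in the classical proof of \eqref{eq:SrRescaling}, so it is not a new obstruction. What your approach buys is independence from the (much harder) factorization theorems and, in principle, uniformity in $r$: it needs only Proposition \ref{prop:ConeSubdivision} to establish that $\sigma_C(t)$ is meromorphic with a pole of order at most $r$ at $t=0$, so the $1$-Gorenstein hypothesis plays no essential role for you. The price is that the cone reciprocity statement is nowhere in the paper and must be supplied; it does follow quickly from the half-open unimodular decompositions the paper already constructs (each half-open unimodular piece satisfies reciprocity by inspection of the geometric series, and the decomposition has no overcounting), but you should state and prove that lemma explicitly rather than cite Ehrhart--Macdonald, which is the polytope rather than the cone version.
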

\begin{proof}
This follows from the rescaling property of the usual $S_r$ given in equation \eqref{eq:SrRescaling}, combined with the representation of $S_r^C$ as a finite product of ordinary $S_r$ that we obtain in the proofs of theorems \ref{thm:S2Cfactorization} and \ref{thm:S3Cfactorization}, specifically in equations \eqref{eq:S2CinS2} and \eqref{eq:S3CinS3}. 
\end{proof}
\begin{proposition}[Analyticity]
	For $ r $ odd the generalized multiple sine is an entire function in $ z $, with zeros at
	\[ 
		z = n\cdot\underline \omega\qquad  n \in C^{\circ} \cap \mathbb{Z}^r  ,
	\]
	coming from $\Gamma^{C^\circ}_r( - z|\underline\omega)^{-1} $, as well as zeros at 
	\[ 
		z = n \cdot\underline \omega\qquad -n \in C \cap \mathbb{Z}^r,
	\]
	coming from $ \Gamma_r^C(z|\underline\omega)^{-1} $. For even $ r $, the multiple sine is meromorphic with zeros for $n \in C^{\circ} \cap \mathbb{Z}^r $ and poles for $-n \in C\cap \mathbb{Z}^r$.
\end{proposition}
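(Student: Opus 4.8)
The plan is to reduce everything to the already-known analyticity of the ordinary multiple gamma and sine functions (the analyticity item of Proposition \ref{prop:S3properties}), via the representation of $\Gamma_r^C$ and $\Gamma_r^{C^\circ}$ as finite products of ordinary Barnes gamma functions. Concretely, the decomposition proposition proved just above writes $\zeta_r^C(s,z|\underline\omega)$ as a finite integer combination $\sum_{d,i} n_{d,i}\,\zeta_d(s,z|\underline\omega_{d,i})$ of ordinary zeta functions attached to the unimodular subcones (and their shared lower-dimensional faces) of a subdivision of $C$ as in Proposition \ref{prop:ConeSubdivision}. Applying $\exp\partial_s(\,\cdot\,)|_{s=0}$ and differentiating termwise (justified by absolute convergence) turns this into a finite product
\[
\Gamma_r^C(z|\underline\omega) = \prod_{d,i}\Gamma_d(z|\underline\omega_{d,i})^{\,n_{d,i}},
\]
and likewise for $\Gamma_r^{C^\circ}$, now summing over interior lattice points only. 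Since a finite product of meromorphic functions is meromorphic, this reduces the whole statement to locating the zeros and poles of the factors and recombining them.

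Next I would feed in the zero data of the factors. The analyticity item of Proposition \ref{prop:S3properties} is equivalent to the statement that $\Gamma_d(z|\underline\omega_{d,i})^{(-1)^d}$ is an \emph{entire} function of $z$ whose zeros are exactly at $z=-m\cdot\underline\omega_{d,i}$ with $m\in\mathbb{Z}^d_{\ge0}$; transported through the $SL_r(\mathbb{Z})$ maps of the subdivision, these sit at $z=-n\cdot\underline\omega$ with $n$ ranging over the lattice points of the corresponding face cone $c_{d,i}\subset\mathbb{R}^r$. The crux is then the following key lemma: $\Gamma_r^C(z|\underline\omega)^{(-1)^r}$ is entire with zero set exactly $\{\,z=-n\cdot\underline\omega : n\in C\cap\mathbb{Z}^r\,\}$, and likewise $\Gamma_r^{C^\circ}(z|\underline\omega)^{(-1)^r}$ is entire with zero set $\{\,z=-n\cdot\underline\omega : n\in C^\circ\cap\mathbb{Z}^r\,\}$. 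The distinction between the closed cone $C$ and its interior $C^\circ$ is exactly what produces the two different lattice families in the proposition, paralleling the shift of $z$ by $|\underline\omega|$ that identifies $\Gamma_r^{C^\circ}(-z|\underline\omega)$ with the ordinary $\Gamma_r(|\underline\omega|-z|\underline\omega)$ for the standard cone.

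Granting the key lemma, the proposition follows by inserting it into $S_r^C(z|\underline\omega)=\Gamma_r^C(z|\underline\omega)^{-1}\,\Gamma_r^{C^\circ}(-z|\underline\omega)^{(-1)^r}$ and splitting on the parity of $r$. For odd $r$ both $\Gamma_r^C(z)^{-1}$ and $\Gamma_r^{C^\circ}(-z)^{(-1)^r}=\Gamma_r^{C^\circ}(-z)^{-1}$ are entire, so $S_r^C$ is entire, with the zeros at $-n\in C\cap\mathbb{Z}^r$ coming from the first factor and those at $n\in C^\circ\cap\mathbb{Z}^r$ from the second. For even $r$ the first factor $\Gamma_r^C(z)^{-1}$ contributes poles at $-n\in C\cap\mathbb{Z}^r$, while $\Gamma_r^{C^\circ}(-z)^{+1}$ contributes zeros at $n\in C^\circ\cap\mathbb{Z}^r$, which is precisely the claimed meromorphic structure.

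The hard part is the key lemma, and the real danger is that a naive local analysis of $\zeta_r^C$ near a candidate point $z_0=-n\cdot\underline\omega$ gives the wrong answer: the regularised order is a global quantity and is parity-dependent (it is $\Gamma_d^{(-1)^d}$, not $\Gamma_d^{-1}$, that is entire), so one cannot simply read the order off the single singular term. Instead I would argue through the product above, where the point is a sign-coherence between two independent parities. Writing $E_{d,i}:=\Gamma_d(\,\cdot\,|\underline\omega_{d,i})^{(-1)^d}$ for the entire factor with zeros on the face lattice, one has $\Gamma_r^C(z)^{(-1)^r}=\prod_{d,i}E_{d,i}^{\,(-1)^{r+d}n_{d,i}}$, so entirety with no spurious poles follows once one checks that every exponent $(-1)^{r+d}n_{d,i}$ is nonnegative, i.e. that the inclusion--exclusion coefficients of the subdivision carry the alternating sign $\mathrm{sign}(n_{d,i})=(-1)^{r-d}$. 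This is the combinatorial heart of the argument; I expect it from the convexity of $C$ via a local Euler-characteristic computation at each lattice point (the faces of the subdivision through a given point form a contractible star, so the signed local count is $1$ inside $C$ and $0$ outside), after which the support of the resulting zero set is automatically $-(C\cap\mathbb{Z}^r)$, respectively $-(C^\circ\cap\mathbb{Z}^r)$; note that orders greater than one may occur at lattice points lying on shared faces, but only locations are asserted. In dimension $2$ this can be verified explicitly from Lemma \ref{lem:subdivision}, giving a concrete check on the signs before tackling general $r$.
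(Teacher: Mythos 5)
Your overall strategy --- reduce $\Gamma_r^C$ and $\Gamma_r^{C^\circ}$ to finite products of ordinary Barnes gamma functions via a subdivision of the cone and then read off zeros and poles --- is exactly what the paper intends by ``applying the same logic as for the ordinary $S_r$'' (the paper's own proof is only that one sentence). However, your ``key lemma'' carries the wrong parity twist, and this propagates through the rest of the argument. The classical fact (Barnes; see Ruijsenaars \cite{R2}) is that $\Gamma_d(z|\underline\omega)^{-1}$ is entire for \emph{every} $d$, with zeros at $z=-m\cdot\underline\omega$ of order equal to the number of representations, while $\Gamma_d$ itself is zero-free. Your claim fails already at $d=2$: writing $\zeta_2(s,z|1,1)=z^{-s}+\zeta_H(s-1,z+1)+(1-z)\zeta_H(s,z+1)$ and differentiating at $s=0$ gives $\Gamma_2(z|1,1)=z^{-1}e^{h(z)}$ with $h$ holomorphic near $0$, i.e.\ a simple \emph{pole} at $z=0$, not a zero (equivalently $\Gamma_2(z|1,1)\propto(2\pi)^{z/2}/G(z)$ with $G$ the Barnes $G$-function, which vanishes at $0$). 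So the ``naive local analysis'' you warn against --- isolating the finitely many terms of the zeta sum singular at $z_0$, each contributing $-\log(z+n\cdot\underline\omega)$ to $\partial_s\zeta|_{s=0}$ --- is in fact valid and gives the correct, parity-independent answer: the order of $\Gamma_r^C$ at $z_0$ is $-\#\{n\in C\cap\mathbb{Z}^r:\ n\cdot\underline\omega=-z_0\}$.

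Two consequences. First, the sign-coherence programme is both unworkable and unnecessary: with the correct lemma the relevant exponents are $-n_{d,i}$, whose signs genuinely alternate with $d$ in any inclusion--exclusion decomposition, so factor-by-factor positivity fails; what makes $(\Gamma_r^C)^{-1}$ entire is that the \emph{net} order at each candidate point equals the honest lattice-point count, which is nonnegative by construction of the decomposition. (Alternatively, use the half-open subdivision of lemma \ref{lem:subdivision}, as in the proofs of theorems \ref{thm:S2Cfactorization} and \ref{thm:S3Cfactorization}: every piece is a shifted copy of a unimodular cone with coefficient $+1$ and no lower-dimensional corrections arise, so $(\Gamma_r^C)^{-1}$ is manifestly a finite product of entire functions.) Second, with the corrected lemma the even-$r$ conclusion comes out with the two factors' roles exchanged: $\Gamma_r^C(z)^{-1}$ is entire and supplies \emph{zeros} at $-n\in C\cap\mathbb{Z}^r$, while $\Gamma_r^{C^\circ}(-z)^{+1}$ supplies \emph{poles} at $n\in C^\circ\cap\mathbb{Z}^r$. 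You need to reconcile this with the statement (and with the analyticity item of proposition \ref{prop:S3properties}, from which you reverse-engineered your lemma): note that the factorization \eqref{eq:SrFactorization} and the difference equation $S_2(z+\omega_1)=S_2(z)/(2\sin(\pi z/\omega_2))$ both independently place the zeros of $S_2$ at $n\le 0$ and the poles at $n\ge 1$. As written, your proof reaches the stated conclusion only because the incorrect key lemma compensates for this exchange.
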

\begin{proof}
	This is straightforward from the definition, applying the same logic as for the ordinary $S_r$ functions, looking at the zeros or poles of $(\Gamma_r^C)^{\pm 1}$. 
\end{proof}

Perhaps the most intriguing property of these generalized triple sine products it that they seem to have an infinite product representation, similar to the one proved for $S_r$ in \cite{Narukawa:2003}, i.e. equation \eqref{eq:SrFactorization}, and which is closely related to the geometry of the cone. For the cases we can handle, $S_r^C$ can be written as a product of q-shifted factorials, one for each 1-dimensional face of the cone, with parameters determined by the inward pointing normals determining the codimension 1 faces intersecting along this 1d face. Also appearing in this infinite product representation is an exponential of a generalized Bernoulli polynomial, introduced in section \ref{sec:genBernoulli}.
\begin{remark}
For technical reasons we require the cone to satisfy the Gorenstein condition in the $r=3$ case. This condition helps us in the following way: let $\xi$ be the vector such that $\xi \cdot v_i = 1 \ \forall i$, then for $m \in C \cap \mathbb{Z}^3$, we have $(m+\xi)\cdot v_i = m\cdot v_i + 1 \geq 1$, so we can write the sum over the interior of the cone as a sum over the whole cone but with a shift of $z$ by $\xi \cdot \underline \omega$. 
\end{remark}

\subsection{Infinite product representation of $S_2^C$}

\begin{theorem}[Infinite product representation of $S_2^C$] \label{thm:S2Cfactorization}
Let $C$ be a two dimensional good cone defined by the normals $\{ v_1,v_2 \}$, and let $\underline \omega$ be such that $\im \left( \frac{\underline \omega \cdot p}{\underline \omega \cdot q} \right ) \neq 0$ for any $p,q \in \mathbb{Z}^2 \setminus \{ 0 \},\ p\neq q$. Then 
\be
	S_2^C ( z | \underline \omega) = e^{\frac{\pi i}{2} B_{22}^C ( z | \underline \omega ) } \prod_{f=1}^2  ( e^{2\pi i \frac{z}{ \tau^f_3 } } | e^{2\pi i \frac{ \tau^f_2 } { \tau^f_3 } } )_\infty , 
\ee
where
\be
	\underline \tau^f = SK_f (\underline \omega,1),
\ee
with $K_f, S \in SL_{3}^{\pm} (\mathbb{Z})$ are as described in section \ref{sec:ConesModularity}.
$B_{22}^C$ is a generalized Bernoulli polynomial as defined in section \ref{sec:genBernoulli}, and explicitly given by equation \eqref{eq:B22C}.
\end{theorem}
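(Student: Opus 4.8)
The plan is to reduce $S_2^C$ to a finite product of \emph{ordinary} double sine functions, one per unimodular subcone supplied by Lemma~\ref{lem:subdivision}, then to factorize each such $S_2$ through Narukawa's formula \eqref{eq:SrFactorization} and to glue the resulting $q$-shifted factorials along the interior rays until only the two genuine $1$-dimensional faces of $C$ remain. First I would subdivide: applying Lemma~\ref{lem:subdivision} to the lattice sum defining $\zeta_2^C$ in \eqref{eq:zetaCdef} with $f(w)=(z+w)^{-s}$, and separately to $\zeta_2^{C^\circ}$ (summing over the interior, so that both bounding rays are excluded), writes each as a finite combination of ordinary Barnes zeta functions with parameters $\underline\omega\times u_j,\underline\omega\times u_{j+1}$ plus lower-dimensional ($\zeta_1,\zeta_0$) corrections from the shared rays and the origin. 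Differentiating at $s=0$ turns these into products of ordinary $\Gamma_2$'s, and forming $S_2^C=\Gamma_2^C(z)^{-1}\Gamma_2^{C^\circ}(-z)$ makes the lower-dimensional corrections recombine --- exactly as in the ordinary passage from $\Gamma_2$ to $S_2$ --- so that $S_2^C$ becomes a finite product of ordinary double sines, with $z$-shifts by $\underline\omega\times u_j$ matching those in \eqref{eq:B22C}; this is the representation \eqref{eq:S2CinS2} used in the rescaling proposition. Throughout, the hypothesis $\im\left(\frac{\underline\omega\cdot p}{\underline\omega\cdot q}\right)\neq0$ guarantees that every ratio $(\underline\omega\times u_j)/(\underline\omega\times u_{j+1})$, being of the form $\underline\omega\cdot p/\underline\omega\cdot q$ with $p,q\in\mathbb{Z}^2\setminus\{0\}$ (since $\underline\omega\times u=\underline\omega\cdot Ju$ for the rotation $J$), has nonzero imaginary part, so Narukawa's factorization applies on each subcone and all $q$-factorials are well defined.

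Next I would factorize and glue. Applying \eqref{eq:SrFactorization} to each ordinary $S_2(z_j|\underline\omega\times u_j,\underline\omega\times u_{j+1})$ produces a prefactor $\exp\{\tfrac{\pi i}{2}B_{22}\}$ and two single-variable $q$-factorials, one tied to $u_j$ and one tied to $u_{j+1}$. The exponents collect, by the additivity recorded in \eqref{eq:B22C} (treating the terminal subcone separately and removing its $z$-shift exactly as there), into $\exp\{\tfrac{\pi i}{2}B_{22}^C(z|\underline\omega)\}$, which is the claimed prefactor since $(-1)^2=1$ makes every ordinary contribution enter with the same sign. The two $q$-factorials meeting along an interior ray $u_j$ (the upper factor of one subcone and the lower factor of the next) are then merged by iterating the gluing identity \eqref{modular_new} of Proposition~\ref{prop:gluing}; the continued-fraction relation $u_{j-1}+u_{j+1}=a_ju_j$ together with $\det[u_j,u_{j+1}]=1$ from Lemma~\ref{lem:subdivision} controls how many elementary gluings are needed at each ray. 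Telescoping over $j=1,\dots,n$ removes every interior factor and leaves precisely one $q$-factorial for each of the outer rays $u_0=v_1$ and $u_{n+1}=v_2$.

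It then remains to identify the two survivors with $\underline\tau^f=SK_f(\underline\omega,1)$. Unwinding $\tilde K_f=[n^f,v^f]^{-1}$ from section~\ref{sec:ConesModularity} and using Cramer's rule (for $\det[n^f,v^f]=1$) gives $\tilde K_f(\omega_1,\omega_2)=(\underline\omega\times v^f,\,-\underline\omega\times n^f)$, whence, with $S$ as in \eqref{eq:Smatrix} acting by $(a,b,c)\mapsto(-c,b,a)$, one finds $SK_f(\underline\omega,1)=(-1,\,-\underline\omega\times n^f,\,\underline\omega\times v^f)$, so that $\tau_3^f=\underline\omega\times v^f$ and $\tau_2^f=-\underline\omega\times n^f$. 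The completion $n^f$ may be taken to be $\pm$ the adjacent interior normal ($u_1$ at the face with $v^f=v_1=u_0$, and $u_n$ at $v^f=v_2=u_{n+1}$), which is admissible because $\det[u_j,u_{j+1}]=1$; this makes $\tau_2^f/\tau_3^f$ equal to the ratio appearing in the telescoped survivor, so that $(e^{2\pi iz/\tau_3^f}|e^{2\pi i\tau_2^f/\tau_3^f})_\infty$ matches it exactly. The freedom of adding multiples of $v^f$ to $n^f$ shifts $\tau_2^f/\tau_3^f$ by an integer and leaves the $q$-factorial unchanged, in accordance with the remark closing section~\ref{sec:ConesModularity}.

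The hard part will be the bookkeeping in the gluing/telescoping step: keeping the $z$-shifts by $\underline\omega\times u_j$, the orientation sign $\det[u_j,u_{j+1}]=1$, and the $(-1)^r$'s consistent so that each application of \eqref{modular_new} lines up and the leftover exponents reproduce $B_{22}^C$ \emph{exactly} rather than up to a boundary term. A secondary subtlety is the sign convention for $2$d faces noted in section~\ref{sec:ConesModularity} (where $\det[x_f,v^f]<0$ forces $\det[n^f,v^f]=-1$, so $K_f\in SL_3^{\pm}(\mathbb{Z})$): this must be tracked through the Cramer computation of the previous paragraph to ensure the two survivors land on $SK_f(\underline\omega,1)$ itself and not merely on an $SL_3(\mathbb{Z})$-equivalent normalization.
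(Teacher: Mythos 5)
Your proposal follows the paper's proof essentially step for step: subdivide via Lemma \ref{lem:subdivision}, write $S_2^C$ as the finite product \eqref{eq:S2CinS2} of ordinary double sines, factorize each through \eqref{eq:SrFactorization}, collect the Bernoulli prefactors into $B_{2,2}^C$ as in \eqref{eq:B22C}, eliminate the contributions from interior rays, and identify the two survivors with $SK_f(\underline\omega,1)$, including the $SL_3^{\pm}(\mathbb{Z})$ caveat for 2d faces and the integer-shift freedom in $n^f$. All of that is right and matches the paper.

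One concrete correction to the mechanism at the interior rays: for $r=2$ the factorization \eqref{eq:SrFactorization} produces \emph{single-variable} $q$-shifted factorials $(x|q)_\infty$, so Proposition \ref{prop:gluing} --- a statement about factorials with at least two $q$-parameters, which the paper uses only in the $S_3^C$ and $G_r^C$ arguments --- is not applicable here, and there is nothing to ``merge''. What actually happens is that the two factors sharing the denominator $\underline\omega\times u_j$ are exact inverses of one another: using periodicity of $x=e^{2\pi i z}$ in $z$, the inversion property \eqref{eq:blockinversion}, and $u_{j-1}+u_{j+1}\in\mathbb{Z}\,u_j$, one gets
\[
\bigl(e^{2\pi i\frac{z+\underline\omega\times u_j}{\underline\omega\times u_j}}\,\big|\,e^{2\pi i\frac{\underline\omega\times u_{j+1}}{\underline\omega\times u_j}}\bigr)_\infty\,\bigl(e^{2\pi i\frac{z+\underline\omega\times u_{j-1}}{\underline\omega\times u_j}}\,\big|\,e^{2\pi i\frac{\underline\omega\times u_{j-1}}{\underline\omega\times u_j}}\bigr)_\infty=1,
\]
so each interior block simply disappears (no iteration or counting of ``elementary gluings'' is needed), leaving only the $u_0$ and $u_{n+1}$ factors. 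With that substitution your argument coincides with the paper's.
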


\begin{proof}
For Re $s > 2$, the sum in the definition of the generalized zeta-function will converge absolutely, so using lemma \ref{lem:subdivision} one can express $\zeta_2^C$ as
\be
	\zeta_2^C ( s,z | \underline \omega) = \zeta_2 ( s,z| \underline \omega \times u_n , \underline \omega \times u_{n+1} ) + \sum_{j=0}^{n-1} \zeta_2(s,z + \underline \omega \times u_j | \underline \omega \times u_j , \underline \omega \times u_{j+1} ) , 
\ee
where $u_0=v_1$ and $u_{n+1}=-v_2$ (the minus sign appears because of how the wedge was previously defined in lemma \ref{lem:subdivision}), and $u_1,\ldots,u_n$ are the subdivision also given in lemma \ref{lem:subdivision}. We here assume that the cone has the shape depicted in figure \ref{fig:2dconeSubdivision}, the other cases all work out in essentially the same way. In the above formula, we separate the term from the `last' ($u_n,u_{n+1}$) wedge since we have to include both boundaries of $C$ in order not to miss the points along the $v_2$ face.  
Similarly, for the generalized zeta function associated to the interior of the cone, $C^\circ$, we use the opposite choice of what to include in every subdivided wedge (i.e. we consider $\tilde W_j = \{ n\cdot u_j > 0, n\cdot u_{j+1} \leq 0 , n\in\mathbb{Z}^2 \}$ instead of $W_j$ as defined in the lemma). Then:
\[
	\zeta_2^{C^\circ} ( s,z| \underline \omega ) = \zeta_2(s,z+\underline \omega \times ( u_n + u_{n+1}) | \underline \omega \times u_n, \underline \omega \times u_{n+1} ) + \sum_{j=0}^{n-1} \zeta_2(s,z + \underline \omega \times u_{j+1} | \underline \omega \times u_j , \underline \omega \times u_{j+1} ) ,
\]
Going through the construction of $S^C_r$ and remembering the definition of the usual $S_2$, we can write: 
\be \label{eq:S2CinS2}
S_2^C ( z | \underline \omega ) = S_2 ( z |  \underline \omega \times u_n , \underline \omega \times u_{n+1} ) \prod_{j=0}^{n-1} S_2 ( z + \underline \omega \times u_j | \underline \omega \times u_j , \underline \omega \times u_{j+1} ).
\ee
Next, using equation \eqref{eq:SrFactorization}, which applies as guaranteed by the condition on $\underline \omega$, it follows that: 
\be
\begin{split}
	S_2^C ( z | \underline \omega ) =  e^{\frac{\pi i}{2} B_{2,2}^C(z|\underline \omega)} 
	(e^{2\pi i \frac{z}{\underline \omega\times u_n}} | e^{2\pi i \frac{\underline \omega \times u_{n+1}}{\underline \omega \times u_n} })_\infty 
	(e^{2\pi i \frac{z}{\underline \omega\times u_{n+1}}} | e^{2\pi i \frac{\underline \omega \times u_{n}}{\underline \omega \times u_{n+1}} })_\infty  \\
	\times \prod_{j=0}^{n-1} (e^{2\pi i \frac{z +  \underline \omega \times u_j }{\underline \omega\times u_j} }|e^{2\pi i  \frac{\underline \omega \times u_{j+1}}{\underline \omega \times u_j} })_\infty 
	(e^{2\pi i \frac{z + \underline \omega \times u_j }{\underline \omega\times u_{j+1}}} |e^{2\pi i  \frac{\underline \omega \times u_{j}}{\underline \omega \times u_{j+1} } })_\infty ,
\end{split}
\ee	
where we've collected the Bernoulli polynomials into the $B_{2,2}^C$ of equation \eqref{eq:B22C}.
Next, consider two factors of this product with the same denominator in their argument of the q-factorial, i.e. 
\[
	\star = (e^{2\pi i  \frac{z + \underline \omega \times u_j}{\underline \omega \times u_j } }|e^{2\pi i  \frac{\underline \omega\times u_{j+1}}{\underline \omega \times u_{j} } })_\infty 
	( e^{2\pi i \frac{z + \underline \omega \times u_{j-1}}{\underline \omega \times u_{j} }} | e^{2\pi i \frac{\underline \omega\times u_{j-1}}{\underline \omega \times u_{j} } })_\infty . 
\]
Because of multiple q-factorials properties, as well as the fact that $u_{i-1} + u_{i+1} = \mathbb{Z} u_i$, those two factors cancels:
\bea
	\star &=& ( e^{2\pi i \frac{z}{\underline \omega \times u_j }} | e^{2\pi i \frac{\underline \omega\times u_{j+1}}{\underline \omega \times u_{j} }} )_\infty 
	( e^{2\pi i \frac{z }{\underline \omega \times u_{j} } }| e^{- 2\pi i \frac{\underline \omega\times u_{j-1}}{\underline \omega \times u_{j} } })^{-1}_\infty \nn \\
	&=& (e^{2\pi i  \frac{z}{\underline \omega \times u_j } }| e^{2\pi i  \frac{\underline \omega\times u_{j+1}}{\underline \omega \times u_{j} } })_\infty 
	( e^{2\pi i \frac{z }{\underline \omega \times u_{j} }} |e^{- 2\pi i \frac{\underline \omega \times(\mathbb{Z}u_j - u_{j+1})}{\underline \omega \times u_{j} }} )^{-1}_\infty \nn \\
	&=& 1 \ . \nn
\eea
So each of these blocks will in fact cancel, except the two which do not pair up with any other block, i.e. the 2 q-factorials
\be
	( e^{2\pi i \frac{z}{\underline \omega \times u_0 } }| e^{2\pi i \frac{\underline \omega \times u_1}{\underline \omega \times u_0 }} )_\infty 
	( e^{2\pi i \frac{z}{\underline \omega \times u_{n+1} } }| e^{2\pi i  \frac{\underline \omega \times u_{n}}{\underline \omega \times u_{n+1} } })_\infty  .
\ee
Finally, using the definitions of $u_0, u_1, u_n$ and $u_{n+1}$ we have that
\be
	\underline \omega \times u_0 = \det [ \underline \omega, v_1 ] , \ \ \ \underline \omega \times u_1 = \det [\underline \omega, n_1 ] , 
\ee
since $n_1$ is defined as a vector that satisfies $\det[ v_1 , n_1]=1$ and $u_1$ fulfills this by construction. Similarly,
\be
	\underline \omega \times u_{n+1} = \det [\underline \omega, -v_2 ] = \det[ v_2, \underline \omega]  , \ \ \ \underline \omega \times u_n = \det [ \underline \omega,  n_2 ] .
\ee
Now, we only need to compare this with the statement in the theorem. For face 1 (for this shape of the cone), corresponding to $v_1$, we have $\det [x_1, v_1 ] > 0$ and thus $\tilde K_1 = [n_1,v_1]^{-1} \in SL_{2}(\mathbb{Z})$. And it is simple to work out that indeed
\be
	\tilde K_1 \begin{pmatrix} \omega_1 \\ \omega_2 \end{pmatrix} = \begin{pmatrix} \det [ \underline \omega, v_1] \\ \det[ \underline \omega, n_1] ,\end{pmatrix}
\ee
which after extending and composing with $S$, to $S K_f \in SL_3(\mathbb{Z})$, gives the right answer. It is also easy to realize that for face 2, corresponding to $v_2$, $\det[x_2,v_2]<0$ and thus we should consider $\tilde K_f = [-n_2,v_2]$, which has determinant $-1$. Comparing in the same way as above, we again see that it matches. 
\end{proof}

\subsection{Infinite product representation of $S_3^C$}

\begin{theorem}[Infinite product representation of $S_3^C$]
\label{thm:S3Cfactorization}
	Let $C$ be a good three dimensional cone, satisfying the 1-Gorenstein condition, with inwards pointing normals $\{ v_i \}$, $i=1,\ldots,N$, a set of 1d faces $\Delta_1^C$, and let $\underline \omega$ be such that $\im \left ( \frac{\underline \omega \cdot p}{\underline \omega \cdot q} \right ) \neq 0$ for $p,q \in \mathbb{Z}^3 \setminus {0}$ and $p\neq q$. Then the associated generalized triple sine $S_3^C ( z | \underline \omega)$ can be written in the following form
	\be
		S_3^C ( z | \underline \omega) = e^{ - \frac{\pi i}{6}B_{33}^C (z | \underline \omega) } \prod_{f \in \Delta_1^C} ( e^{2\pi i \frac{ z } { \tau^f_4 } } | e^{2\pi i \frac{\tau^f_2}{\tau^f_4}}, e^{2\pi i \frac{\tau^f_3}{\tau^f_4} })_\infty , 
	\ee
	where
	\[
		\underline \tau^f = S K_f  (\underline \omega, 1 ) ,
	\]
	with $S$ and $K_f$ as defined in section \ref{sec:ConesModularity}. 
\end{theorem}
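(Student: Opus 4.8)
The plan is to transport the three-step scheme of Theorem~\ref{thm:S2Cfactorization} --- subdivide, factorize, cancel --- from two to three dimensions, with the $1$-Gorenstein hypothesis doing the boundary bookkeeping that happened automatically for $r=2$.

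First I would reduce $S_3^C$ to a finite product of ordinary triple sines. By Proposition~\ref{prop:ConeSubdivision} the cone admits a subdivision $C=\bigcup_i C_i$ into unimodular simplicial cones, each carrying an $A_i\in SL_3(\mathbb{Z})$ sending it to $\mathbb{R}^3_{\geq 0}$. Since the defining series of $\zeta_3^C$ is absolutely convergent for $\re s>3$, I can reorganize it over the $C_i$, each top-dimensional piece becoming an ordinary $\zeta_3(s,z\,|\,A_i\underline\omega)$, with the overlaps on shared lower-dimensional faces corrected by inclusion--exclusion exactly as in the proof of the decomposition of $\zeta_r^C$. The Gorenstein condition enters through the Remark preceding this theorem: $C^\circ\cap\mathbb{Z}^3=\xi+(C\cap\mathbb{Z}^3)$, so $\zeta_3^{C^\circ}(s,z\,|\,\underline\omega)=\zeta_3^{C}(s,z+\xi\cdot\underline\omega\,|\,\underline\omega)$ and hence $\Gamma_3^{C^\circ}(-z\,|\,\underline\omega)=\Gamma_3^{C}(\xi\cdot\underline\omega-z\,|\,\underline\omega)$. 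Differentiating at $s=0$ and inserting this into the definition of $S_3^C$ collapses it to $[\Gamma_3^C(z)\Gamma_3^C(\xi\cdot\underline\omega-z)]^{-1}$ and produces the three-dimensional analogue of \eqref{eq:S2CinS2}: a finite product of ordinary $S_3$ factors, one per subcone with an appropriate shift of $z$, together with lower-dimensional $S_2,S_1$ factors coming from the overlaps.

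Next I would factorize every ordinary $S_3$ by \eqref{eq:SrFactorization}, whose hypothesis $\im\big(\frac{\underline\omega\cdot p}{\underline\omega\cdot q}\big)\neq 0$ is precisely the genericity assumed in the statement. Each $S_3(C_i)$ then becomes $\exp\!\big(-\tfrac{\pi i}{6}B_{33}\big)$ times three double $q$-shifted factorials, one for each $1$d edge of $C_i$: the factorial attached to an edge $e$ has argument $e^{2\pi i z/(e\cdot\underline\omega)}$ and elliptic parameters $e^{2\pi i(e'\cdot\underline\omega)/(e\cdot\underline\omega)}$ for the two neighbouring edges $e'$ of $C_i$. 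Summing the exponential prefactors over the subcones with their shifts collapses them into $\exp\!\big(-\tfrac{\pi i}{6}B_{33}^C\big)$ straight from the generating-function definition \eqref{eq:genBernoulliDef}, in the same way that \eqref{eq:B22C} arose from Lemma~\ref{lem:subdivision}.

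The heart of the argument is the cancellation of every double factorial attached to an edge that is not a $1$d face of $C$, and this is where I expect the real work. Grouping the factorials by their edge $e$, those sharing $e$ come exactly from the simplicial cones surrounding it; projecting along $e$ these form a two-dimensional fan, and the two elliptic parameters of each factorial run over neighbouring rays of that transverse fan, so one can replay the telescoping of Theorem~\ref{thm:S2Cfactorization} via Proposition~\ref{prop:gluing} (the $e$-slot acting as a spectator), using the unimodular relation among consecutive transverse rays (the analogue of $u_{i-1}+u_{i+1}\in\mathbb{Z}u_i$). For an edge in the interior of $C$ the transverse fan closes into a full loop and the product telescopes all the way to $1$. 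For a genuine $1$d face $f$ the transverse fan is bounded by the two $2$d faces with normals $v_1^f,v_2^f$, the telescoping leaves a single double factorial, and a direct computation --- identical in spirit to the $r=2$ end-game comparing survivors with $v_1,v_2$ --- matches its argument and parameters with $\underline\tau^f=SK_f(\underline\omega,1)$, where $\tilde K_f=[n^f,v_1^f,v_2^f]^{-1}$; the ambiguity in $n^f$ only shifts parameters by integers and is invisible to the factorial.

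The main obstacle, genuinely absent in two dimensions, is the boundary bookkeeping. An interior $3$d edge is shared by arbitrarily many subcones, so the cancellation is a telescoping around a variable-length fan rather than a single pairwise cancellation; worse, edges lying in the relative interior of a $2$d face of $C$ (unavoidable when that face is not unimodular) produce bounded transverse fans whose residual factorials must be shown to cancel against the lower-dimensional $S_2,S_1$ corrections generated in the first step --- and it is precisely here that the good-cone and $1$-Gorenstein structure are needed to keep the accounting finite and closed. Finally, because Proposition~\ref{prop:ConeSubdivision} is only existential in three dimensions, I would invoke the uniqueness of analytic continuation to conclude that the resulting expression is independent of the chosen subdivision.
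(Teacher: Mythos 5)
Your overall scheme (subdivide, factorize via \eqref{eq:SrFactorization}, cancel everything except the $1$d faces) is the right one, but you take a genuinely different and substantially harder route than the paper, and the route as described has a concrete error plus an unfilled gap at exactly the point you identify as the heart of the argument. The paper does \emph{not} invoke Proposition \ref{prop:ConeSubdivision} and a general $3$d unimodular subdivision. Instead it uses the $1$-Gorenstein condition a second time, beyond the shift $C^\circ = \xi + C$: choosing coordinates with $\xi=(1,0,0)$, every normal reads $v_i=(1,-L_i)$, so the lattice sum fibers over the $(n_2,n_3)$-plane with $n_1$ running over a half-line above each point. The base plane is then partitioned into \emph{half-open} wedges $W_i$ (one per $2$d face of $C$), each subdivided explicitly by Lemma \ref{lem:subdivision}. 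Because the wedges are half-open, every lattice point is counted exactly once and no inclusion--exclusion is ever needed; the only correction is a single $\zeta_1(s,z|\omega_1)$ from the origin of the base. This reduces everything to the explicitly solvable $2$d continued-fraction subdivision and produces exactly one ``interior edge'' (the $\xi$-ray), whose residual is controlled by Corollary \ref{cor:A1cancellation}.

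The concrete problems with your version are these. First, you assert that for an edge in the interior of $C$ ``the transverse fan closes into a full loop and the product telescopes all the way to $1$.'' This contradicts Corollary \ref{cor:A1cancellation}: the product of double $q$-factorials around a complete $2$d fan equals $1-e^{2\pi i z}$, not $1$. In the paper this single residual $(1-e^{2\pi i z/\omega_1})^{-1}$ is cancelled (up to a phase absorbed into the Bernoulli factor) against the $2\sin(\pi z/\omega_1)$ coming from the $\zeta_1$ correction term; in your scheme every interior edge of the subdivision produces such a factor, and each must be matched against an $S_1$-type overlap correction that you have not exhibited. Second, your first step genuinely requires inclusion--exclusion over shared $2$d and $1$d faces of the closed simplicial subcones, generating a family of $S_2$ and $S_1$ corrections with signs and multiplicities; you defer their cancellation against the residual factorials on non-face edges to ``the real work'' without carrying it out, and the Bernoulli bookkeeping for $B^C_{3,3}$ would have to track the same corrections. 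These are not cosmetic omissions: they are precisely the difficulties the paper's Gorenstein fibration is designed to avoid, and without resolving them the proof is incomplete. (A minor further point: independence of the chosen subdivision is not actually needed --- proving the identity for one subdivision suffices --- so the appeal to uniqueness of analytic continuation at the end is superfluous.) If you want to salvage your route, the natural fix is to use a \emph{half-open} unimodular decomposition of $C$ so that no overlap corrections arise, and then to pair the $1-e^{2\pi i z/(e\cdot\underline\omega)}$ residuals from interior edges against the half-open boundary conventions; but that is essentially rebuilding the paper's argument in different clothing.
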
	

Before giving the proof, we first show some preliminary results about about products of q-factorials over wedges which will be used in the proof of theorem \ref{thm:S3Cfactorization}. 

	\begin{proposition} \label{prop:induction}
		Consider two rays from the origin in $\mathbb{R}^2$ with primitive normals $v,w \in \mathbb{Z}^2$ pointing in an anti-clockwise direction such that $v\times w = 1$. Let $u_1,\ldots,u_n$ denote the normals of $n$ extra lines subdividing the wedge defined by $v,w$ such that $v \times u_1 = u_i \times u_{i+1} = u_n\times w = 1$, and let $u_0 = v, u_{n+1} = w$. Then 
		\be
			\prod_{i=0}^{n} ( e^{2\pi i z } | e^{2\pi i \underline \omega \times u_i }, e^{ - 2\pi i \omega \times u_{i+1} } )_\infty = (e^{2\pi i z } | e^{2\pi i \underline \omega \times v}, e^{-2\pi i \underline \omega \times w } )_\infty  . 
		\ee
	\end{proposition}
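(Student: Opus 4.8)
The plan is to argue by induction on the number $n$ of interior subdivision rays, peeling off one ray at a time by means of the gluing relation of Proposition \ref{prop:gluing}. Throughout I would write $x = e^{2\pi i z}$ and $a_i = e^{2\pi i\,\underline\omega\times u_i}$, so that the $i$-th factor on the left-hand side is $(x\,|\,a_i, a_{i+1}^{-1})_\infty$ and the claimed right-hand side is $(x\,|\,a_0, a_{n+1}^{-1})_\infty$ (recall $u_0=v$, $u_{n+1}=w$). The base case $n=0$ is the tautology that the single factor $(x\,|\,a_0,a_1^{-1})_\infty$ equals the right-hand side.

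For the inductive step the key local move is a two-factor merge. Suppose an interior ray $u_k$ ($1\le k\le n$) is the mediant of its neighbours, $u_k = u_{k-1}+u_{k+1}$; then $\underline\omega\times u_k = \underline\omega\times u_{k-1} + \underline\omega\times u_{k+1}$, i.e.\ $a_k = a_{k-1}a_{k+1}$. Specializing Proposition \ref{prop:gluing} to $r=1$ and rewriting it as $(x\,|\,q_0, q_0q_1)_\infty = (x\,|\,q_0,q_1)_\infty\,(x\,|\,q_1^{-1}, q_0q_1)_\infty$, the choice $q_0 = a_{k-1}$, $q_0q_1 = a_{k+1}^{-1}$ (so that $q_1 = a_k^{-1}$, $q_1^{-1}=a_k$) gives exactly
\[
	(x\,|\,a_{k-1}, a_k^{-1})_\infty\,(x\,|\,a_k, a_{k+1}^{-1})_\infty = (x\,|\,a_{k-1}, a_{k+1}^{-1})_\infty .
\]
Thus factors $k-1$ and $k$ combine into the single factor attached to the merged wedge $(u_{k-1}, u_{k+1})$. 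Moreover $u_{k-1}\times u_{k+1} = u_{k-1}\times(u_{k-1}+u_{k+1}) = u_{k-1}\times u_k = 1$, so deleting $u_k$ leaves a configuration $u_0,\dots,u_{k-1},u_{k+1},\dots,u_{n+1}$ that still meets the hypotheses (consecutive cross products equal $1$, all rays inside and ordered anti-clockwise), now with $n-1$ interior rays; the induction hypothesis then collapses the whole product to $(x\,|\,a_0, a_{n+1}^{-1})_\infty$.

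It remains to supply such a removable ray, and this combinatorial point is the only real obstacle. Since $v\times w=1$, the pair $\{v,w\}$ is a $\zahl$-basis, so every $u_i = s_i v + t_i w$ has $s_i,t_i\in\zahl_{\ge 0}$, with $u_0 = (1,0)$ and $u_{n+1}=(0,1)$ in these coordinates; the relation $u_i\times u_{i+1}=1$ reads $s_i t_{i+1}-t_i s_{i+1}=1$, which forces each $u_i$ primitive and hence every interior ray to satisfy $s_i,t_i\ge 1$, i.e.\ $s_i+t_i\ge 2$. I would then pick an interior ray $u_k$ maximising $s_k+t_k$. Writing $u_{k-1}+u_{k+1}=c_k u_k$ (which holds with $c_k\in\zahl$ because $\{u_{k-1},u_k\}$ is a basis and $u_k\times u_{k+1}=1$) and taking $\ell_1$-norms in the non-negative orthant gives $\|u_{k-1}\|_1+\|u_{k+1}\|_1 = c_k\|u_k\|_1$; maximality yields $c_k\le 2$, and $c_k=2$ would force two adjacent rays of equal norm $L\ge 2$, for which $u_{k-1}\times u_k = L\,(a-b)=1$ is impossible. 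Hence $c_k=1$, so $u_k$ is a mediant and the merge above applies, completing the induction.

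Finally, I would remark that the merge identity can alternatively be obtained by taking the plethystic logarithm \eqref{eq:plethystic1}, where it reduces to the elementary rational-function identity
\[
	\frac{1}{(1-A)(1-(AB)^{-1})} + \frac{1}{(1-AB)(1-B^{-1})} = \frac{1}{(1-A)(1-B^{-1})},
\]
valid in a convergent regime and extended to all $\underline\omega$ by uniqueness of meromorphic continuation. I prefer invoking Proposition \ref{prop:gluing} directly, since it is already established in every convergence regime and so sidesteps any analytic-continuation bookkeeping.
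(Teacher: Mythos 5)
Your proof is correct, and it runs on the same engine as the paper's: induction on $n$, with Proposition \ref{prop:gluing} supplying the one analytic identity that fuses the two $q$-factorials attached to a pair of adjacent small wedges sharing a mediant ray. Where you differ is in the combinatorial organization. The paper normalizes $v=(0,1)$, $w=(-1,0)$ by an $SL_2(\mathbb{Z})$ transformation, proves that the \emph{global} mediant $v+w$ must occur among the $u_i$ (via a short case analysis on where the subdividing lines sit relative to it), and splits the wedge at that ray into two sub-wedges, recursing on each; its base case $n=1$ is literally Proposition \ref{prop:gluing}. You instead contract one ray per step: you locate an interior $u_k$ that is the mediant of its own neighbours and merge the two factors it separates. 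Your existence argument --- maximize $s_k+t_k$ in the basis $\{v,w\}$, deduce $u_{k-1}+u_{k+1}=c_k u_k$ with $c_k\le 2$ from the $\ell_1$-norm identity, and exclude $c_k=2$ --- is clean and avoids both the normalization and the case analysis; it does quietly use that every $u_i$ lies in the non-negative span of $v,w$, which comes from the geometric hypothesis that the lines subdivide the wedge (the paper leans on the same fact implicitly when it writes its normals with positive entries). The specialization of Proposition \ref{prop:gluing} with $q_0=a_{k-1}$, $q_0q_1=a_{k+1}^{-1}$ is exactly the required merge, and $u_{k-1}\times u_{k+1}=u_{k-1}\times u_k=1$ after deletion, so the induction closes. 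Net effect: same key lemma, a divide-and-conquer recursion in the paper versus a single-ray contraction in yours; neither is more general, but your version localizes the combinatorics and dispenses with the case analysis.
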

\begin{proof}
	We proof this by induction on the number of lines added. Up to an $SL_2(\mathbb{Z})$ transformation, we can assume that $v= (0,1)$ and $w = (-1,0)$, and then we first of all consider the case of adding a single line $u$. The only possibility is $u=v+w=(-1,1)$ and for these lines, the expression above is exactly proposition \ref{prop:gluing}.
	Next, consider adding $n$ redundant lines with normals $u_1,\ldots,u_n$, still assuming $v=(0,1), w=(-1,0)$. If one of the added lines has $u_i =u=(-1,1)$, we instead consider the two wedges $v,u_i$ and $u_i, w$, each of which satisfies the conditions of the proposition, and will be subdivided by less than $n$ lines and the proof follows by induction. This relies on the assumption that it is always possible to find a line with normal $u$ in the subdivision proposed here.  
	
	To prove this, assume first that all $n$ lines are between the lines with normals $v$ and $u$. Then the last of them must have normal $u$. If instead all lines are between the lines with normals $u$ and $w$, then the first of them must have normal $u$. The remaining case is that there are lines between both  the lines with normals $u,v$ and $u,w$. Assume that none of the lines have normal equal to $u$, then there are two primitive normals $u_i,u_{i+1}$ on either side of $u$. Let $u_i = (-c,d)$ and $u_{i+1} = (-a,b)$, with $a,b,c,d > 0$. In order for the corresponding lines to be on either side of the $u$-line, these integers needs to satisfy $\frac{c}{d} < 1 < \frac{a}{b}$ or $d>c \geq 1$ and $a > b \geq 1$. However this leads to $u_i \times u_{i+1} = ad-bc > 1$ which is in contradiction with the assumptions made. So we can always find the line $u$ among the lines of the subdivision.
		
	\end{proof}

\begin{corollary} \label{cor:A1cancellation}
	Assume that there exist a set of normals $\{ u_i \}_{i=0}^{n+1}$ satisfying $u_i \times u_{i+1} = 1$ that covers the entire $\mathbb{R}^2$, i.e. $u_0 = u_{n+1}$. Then 
	\be \label{eq:A1canc}
		\prod_{i=0}^{n} ( e^{2\pi i z } | e^{2\pi i \underline \omega \times u_i }, e^{ - 2\pi i \omega \times u_{i+1} } )_\infty = 1 - e^{2 \pi i z }.
	\ee
\end{corollary}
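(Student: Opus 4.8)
The plan is to read the left-hand side as a function $P(\{u_i\})$ of the data $\{u_i\}$, which is exactly a complete unimodular fan in $\mathbb{R}^2$, and to show that $P$ is unchanged under refining the fan, so that it can be computed on a short list of minimal fans. The basic move is the insertion of a \emph{mediant} ray $u'=u_i+u_{i+1}$ between two consecutive rays: since $u_i\times u'=u'\times u_{i+1}=1$ this keeps the fan unimodular, and by proposition \ref{prop:gluing} the single factor attached to the edge $(u_i,u_{i+1})$ factorizes as the product of the two factors attached to $(u_i,u')$ and $(u',u_{i+1})$. Hence $P$ is invariant under a mediant insertion, and equally under its inverse, the contraction of any ray $u_k$ with $u_{k-1}+u_{k+1}=u_k$ (a torus-invariant $(-1)$-curve). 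I would then invoke the classification of smooth complete toric surfaces: iterated contraction of $(-1)$-curves terminates at a minimal fan, which is either that of $\mathbb{P}^2$ ($N=3$ rays) or a Hirzebruch fan $\mathbb{F}_a$ ($N=4$ rays). It therefore suffices to evaluate $P$ on these.

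\textbf{Base cases.} For the explicit evaluation I would pass to logarithms. Generalizing \eqref{eq:plethystic1} and continuing it past $|q_j|=1$, one gets, for $|a|,|b|\neq1$, the closed form
\[
  \log(x\,|\,a,b)_\infty = -\sum_{\ell\ge1}\frac{x^\ell}{\ell}\,\frac{1}{(1-a^\ell)(1-b^\ell)},
\]
which one checks is consistent with the sign flips in \eqref{qfac}. With $t_i=e^{2\pi i\,\underline\omega\times u_i}$ (all $|t_i|\neq1$ under the genericity hypothesis on $\underline\omega$) the edge $(u_i,u_{i+1})$ contributes $a=t_i$, $b=t_{i+1}^{-1}$, so that $\log P=-\sum_{\ell\ge1}\frac{e^{2\pi i\ell z}}{\ell}\,\Sigma_\ell$ with $\Sigma_\ell=\sum_{i=0}^{n}\frac{1}{(1-t_i^{\ell})(1-t_{i+1}^{-\ell})}$. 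Since $\log(1-e^{2\pi iz})=-\sum_{\ell\ge1}e^{2\pi i\ell z}/\ell$, the corollary is equivalent to $\Sigma_\ell=1$ for all $\ell$, and replacing $\underline\omega$ by $\ell\underline\omega$ reduces this to the case $\ell=1$. For $\mathbb{F}_a$ the four summands split into two pairs, each pair sharing one factor and having the two remaining factors reciprocal to one another, so the elementary identity $\frac{1}{1-w}+\frac{1}{1-w^{-1}}=1$ collapses each pair, and a final use of the same identity gives $\Sigma=1$. For $\mathbb{P}^2$ the three rays satisfy $\sum_i u_i=0$, hence $\prod_i t_i=1$, and a direct three-term computation again yields $\Sigma=1$. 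Tracing back through the logarithm gives $P=1-e^{2\pi iz}$ in each base case, and hence in general.

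\textbf{Main obstacle and an alternative.} The only substantial input is the reduction step: mediant invariance is elementary, but knowing that the contractions terminate at the explicit list $\{\mathbb{P}^2,\mathbb{F}_a\}$ is precisely the toric Castelnuovo theorem, i.e.\ the statement that every complete unimodular fan with $N\ge5$ possesses a contractible ray. A conceptually cleaner but technically comparable route is to recognize $\Sigma_\ell=1$ as an instance of the Brion/Lawrence--Varchenko signed cone decomposition: the meromorphically continued exponential sums over the maximal cones of a complete fan cancel everywhere except at the origin, whose contribution is $1$. Making that viewpoint rigorous requires half-open cone decompositions to control the boundary lattice points, which is the same difficulty in a different guise; I would present the inductive argument as the primary one and mention the Brion interpretation as motivation.
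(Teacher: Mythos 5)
Your proof is correct, and it reaches the base cases by a genuinely different route than the paper. The two arguments share the first step: invariance of the product under unimodular refinement, which is your ``mediant insertion'' and is exactly proposition \ref{prop:induction}, proved from proposition \ref{prop:gluing}. Where you diverge is the reduction. The paper normalizes $u_0=(0,1)$, $u_1=(-1,0)$ and asserts that $(1,-1)$ must occur among the remaining normals, so that the whole fan refines the three-ray fan $\{(0,1),(-1,0),(1,-1)\}$ and only that one configuration needs to be computed. You instead contract rays satisfying $u_{k-1}+u_{k+1}=u_k$ down to a minimal complete smooth fan and invoke the classification of minimal toric surfaces, which forces you to also treat the four-ray Hirzebruch fans $\mathbb{F}_a$ as base cases. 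This costs you a heavier external input (the toric Castelnuovo theorem), but it buys completeness: the paper's assertion is in fact false for, e.g., the fan $(0,1),(-1,0),(0,-1),(1,0)$ of $\mathbb{F}_0$, where $(1,-1)$ does not appear --- the argument of proposition \ref{prop:induction} forces the mediant to appear only inside a convex wedge, not in the reflex complement of the first quadrant --- and these are precisely the fans your Hirzebruch base case covers. Your evaluation of the base cases is also different from the paper's direct $q$-factorial manipulation: passing to $\log(x\,|\,a,b)_\infty=-\sum_{\ell\ge1}\frac{x^\ell}{\ell}(1-a^\ell)^{-1}(1-b^\ell)^{-1}$ reduces everything to the rational identity $\Sigma_\ell=1$, the pairing via $\frac{1}{1-w}+\frac{1}{1-w^{-1}}=1$ checks out for both $\mathbb{P}^2$ and $\mathbb{F}_a$, and the reduction to $\ell=1$ by rescaling $\underline\omega$ is legitimate since the identity is one of rational functions in the $t_i$. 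The only point worth writing out explicitly is the one you flag yourself: that the closed form for the logarithm remains valid across $|a|=1$ or $|b|=1$ under the sign conventions of \eqref{qfac}, which is a one-line verification.
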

 \begin{proof} 
 Using a $SL_2 (\mathbb{Z})$ transform, we can put $u_0, u_1$ into $(0,1)$ and $(-1,0)$ respectively. Then by the argument used in the proof of proposition \ref{prop:induction}, the normal $(1,-1)$ must be among the rest of the normals. Thus, after the $SL_2 (\mathbb{Z})$ transformation, we can view the normals as giving some subdivision of the two wedges specified by the normals $(-1,0),(1,-1)$ and $(1,-1),(0,1)$, and by proposition \ref{prop:induction} we know that such subdivisions do not change the value of the product. The final step is computing the above product for the three normals $(0,1),(-1,0),(1,-1)$. This is easily done using properties of the multiple q-factorials,
	 \[
		\prod_{i=1}^3  ( e^{2\pi i z } | e^{2\pi i \underline \omega \times v_i }, e^{ - 2\pi i \omega \times v_{i+1} } )_\infty = 1 - e^{2 \pi i z },
	\]
proving the corollary. 
  \end{proof}

\begin{proof}[Proof of theorem \ref{thm:S3Cfactorization}]
	Since the cone satisfies the 1-Gorenstein condition, we choose coordinates such that for all normals, their first component is equal to 1, i.e. the normal vectors can be written as $v_i = (1, -L_i)$ for two-vectors $L_i= (L_i^1,L_i^2)$. This is equivalent to having a coordinate system where $\xi = (1,0,0)$. A vector $n = (n_1,n_2,n_3)$ inside the cone can be written as 
	\be
		n_1 \geq L_i^2 n_2 + L_i^3 n_3 , \ \ \ i=1,\ldots,N. 
	\ee	
	At each point in the $(n_2,n_3)$ plane, one of those inequalities will dominate and give the true lower bound of $n_1$. Geometrically those regions correspond to the regions over the different faces of the cone. Let $W_i$ be the wedge of the $n_2-n_3$ plane corresponding to face $i$, $W_i = \{ m \in \mathbb{R}^2 : m\cdot v_i \geq 0 , m\cdot v_{i+1} < 0 \}$, and note that for $\re s$ large enough, the sum is absolutely convergent so that we can rearrange the order of summation without changing its value. Using this, the generalized zeta function $\zeta^C_3 ( z | \underline \omega )$ is expressed as
	\be
		\zeta_3^C(s,z|\underline \omega) = \sum_{i = 1}^N \sum_{(n_2,n_3) \in W_i} \sum_{n_1 = L_i^2 n_2 + L_i^3 n_3}^{\infty} \frac{1}{(z + n\cdot \underline \omega )^s }   + \sum_{n_1 = 0}^{\infty } \frac{1}{(z + n_1 \omega_1 )^s } , 
	\ee
	where the last sum is performed over the origin of the $n_2-n_3$ plane in order not to miss the lattice points there. 
	Next, recognizing the last term as $\zeta_1$ gives 
	\[
	 \begin{split}
		\zeta_3^C(s,z|\underline \omega) &=  \sum_{i = 1}^N \sum_{(n_2,n_3) \in W_i} \sum_{n_1 = 0}^{\infty} \frac{1}{(z + n_1 \omega_1 + (\omega_2 + L_i^2 \omega_1 ) n_2 + (\omega_3 + L_i^3 \omega_1) n_3 )^s }  + \zeta_1 ( s,z|\omega_1)  \\
		&=  \sum_{i = 1}^N  \sum_{n_1 = 0}^{\infty} \sum_{m \in W_i} \frac{1}{(z + n_1 \omega_1 + m \cdot \underline \omega_i )^s }  + \zeta_1 ( s,z|\omega_1),
	\end{split}
	\]
	where we define $\underline \omega_i = (\omega_2 + L_i^2\omega_1, \omega_3 + L_i^3 \omega_1 )$.	
	
	The absolute convergence of the zeta function (for large enough $\re s$) allows us to apply lemma \ref{lem:subdivision} to the sum. This gives a set of subdividing lines $\{ u_{i,k} \}_{k=0}^{N_i+1}$ for each wedge labeled by $i$. Each subdivided wedge bounded by $u_{i,j},u_{i,j+1}$ contributes with a copy of $\zeta_2$. 
	\[ \begin{split} 
		\zeta_3^C(s,z|\underline \omega) &= \zeta_1 ( s,z|\omega_1 ) +  \sum_{i = 1}^N  \sum_{n_1 = 0}^{\infty} \sum_{j=0}^{N_i} \zeta_2 ( s,z + \underline \omega_i \times u_{i,j} + n_1 \omega_1 | \underline \omega_i \times u_{i,j}, \underline \omega_i \times u_{i,j+1} ) \\
		&= \zeta_1 ( s,z| \omega_1 ) + \sum_{i=1}^N \sum_{j=0}^{N_i} \zeta_3 ( s, z +  \underline \omega_i \times u_{i,j}  | \omega_1,  \underline \omega_i \times u_{i,j}, \underline \omega_i \times u_{i,j+1} ). 
	\end{split} \] 
	In this way $\zeta^C_3$ has been resolved into a finite sum of ordinary $\zeta_3$ functions. This directly implies the factorization of the $\Gamma_3^C$: 
	\[  
		\Gamma_3^C ( z | \underline \omega ) = \Gamma_1 ( z | \omega_1) \prod_{i=1}^N \prod_{j=0}^{N_i} \Gamma_3 (z + \underline \omega_i \times u_{i,j}  | \omega_1,  \underline \omega_i \times u_{i,j}, \underline \omega_i \times u_{i,j+1} ),
	\]
	In the definition of the wedges $W_i$, the choice of including the lower edge (i.e. $m\cdot v_i \geq 0$) while excluding the upper edge ($m\cdot v_{i+1} < 0$) is arbitrary. One can equally well define the wedges with the opposite convention, this indeed leads to the alternative representation of $\Gamma_3^C$ as 
	\[
		\Gamma_3^C ( z | \underline \omega ) =  \Gamma_1 ( z | \omega_1) \prod_{i=1}^N \prod_{j=0}^{N_i} \Gamma_3 (z + \underline \omega_i \times u_{i,j+1}  | \omega_1,  \underline \omega_i \times u_{i,j}, \underline \omega_i \times u_{i,j+1} ).
	\]
	The 1-Gorenstein condition lets us write the sum over the interior of the cone as a shift of $z$ by $\xi \cdot \underline \omega = \omega_1$ since $\xi = (1,0,0)$, and using this as well as both the above representations of $\Gamma_3^C$, we write the generalized triple sine as 
	\begin{align*}
		S_3^C ( z | \underline \omega ) &=  \Gamma_3^C ( z | \underline \omega )^{-1} \Gamma_3^C ( \xi \cdot \underline \omega - z | \underline \omega )^{-1}\nn \\
		&= \Gamma_1 ( z | \omega_1)^{-1} \Gamma_1 (\xi \cdot \underline \omega - z | \omega_1)^{-1} \prod_{i=1}^N \prod_{j=0}^{N_i} \Gamma_3 (z + \underline \omega_i \times u_{i,j}  | \omega_1,  \underline \omega_i \times u_{i,j}, \underline \omega_i \times u_{i,j+1} )^{-1} \nn \\
		&\times \Gamma_3 (\omega_1 - z + \underline \omega_i \times u_{i,j+1}  | \omega_1,  \underline \omega_i \times u_{i,j}, \underline \omega_i \times u_{i,j+1} )^{-1}.
	\end{align*}
	The factors of $\Gamma_3$ combines into the usual triple sine, and the factors of $\Gamma_1$ combines into $S_1$ which is nothing but  the usual $\sin$ function
	\be \label{eq:S3CinS3}
		S^C_3 ( z | \underline \omega ) = 2  \sin \left ( \frac{\pi z }{\omega_1} \right ) \prod_{i=1}^N \prod_{j=0}^{N_i} S_3 (z + \underline \omega_i \times u_{i,j}  | \omega_1,  \underline \omega_i \times u_{i,j}, \underline \omega_i \times u_{i,j+1} ).
	\ee
	Next, because of the condition on $\underline \omega$, one can now use the infinite product representation of $S_3$ given by equation  \eqref{eq:SrFactorization}, and expand this as 
	\begin{equation} \label{eq:usingfactor} \begin{split}
		&S^C_3 ( z | \underline \omega ) =  2  \sin \left ( \frac{\pi z }{\omega_1} \right ) \times B \times \overbrace{\prod_{i=1}^N \prod_{j=0}^{N_i} ( e^{2\pi i \frac{ z + \underline \omega_i \times u_{i,j} } {\omega_1} } | e^{2\pi i \frac{\underline \omega_i \times u_{i,j}}{\omega_1}}, e^{2\pi i \frac{\underline \omega_i \times u_{i,j+1}}{\omega_1}} )_\infty}^{A_1} \times \\
		& \underbrace{ \prod_{i=1}^N \prod_{j=0}^{N_i}  ( e^{2\pi i \frac{ z } { \underline \omega_i \times u_{i,j} } } | e^{2\pi i \frac{\omega_1}{ \underline \omega_i \times u_{i,j} }}, e^{2\pi i \frac{\underline \omega_i \times u_{i,j+1}}{ \underline \omega_i \times u_{i,j} }} )_\infty  ( e^{2\pi i \frac{ z + \underline \omega_i \times u_{i,j} } {\underline \omega_i \times u_{i,j+1}} } | e^{2\pi i \frac{\underline \omega_i \times u_{i,j}}{\underline \omega_i \times u_{i,j+1}}}, e^{2\pi i \frac{\omega_1}{\underline \omega_i \times u_{i,j+1}}} )_\infty}_{A_2} , 
		\end{split}
	\end{equation}
	where
	\be
	 	B = \prod_{i=1}^N \prod_{j=0}^{N_i} e^{ - \frac{\pi i}{6}  B_{33} (z + \underline \omega_i \times u_{i,j}  | \omega_1,  \underline \omega_i \times u_{i,j}, \underline \omega_i \times u_{i,j+1} )} . 
	\ee
	We will deal with the factors $A_1,A_2$ and $B$ separately. The factor $A_1$ can be written exactly in the form of the product considered in corollary \ref{cor:A1cancellation}, using the property \eqref{eq:blockinversion} of q-factorials, and applying the corollary gives 
	\be
		A_1 = (1 - e^{2\pi i \frac{z}{\omega_1}})^{-1}. 
	\ee
	The $A_1$ factor almost cancels against the $2\sin(\pi z/\omega_1)$ and we are left with $ie^{-i\pi \frac{z}{\omega_1}}$, this term will be considered together with the $B$ factor.

	To deal with the $A_2$ factor, consider first the contribution from two neighboring small wedges inside the same overall wedge, i.e. the contribution from the wedge between $u_{i,k-1}, u_{i,k}$ and the wedge $u_{i,k},u_{i,k+1}$. To prevent notational clutter, we suppress the index $i$ on $u_{i,k}$ in the next few formulas. By the definition of the subdivision we have that 
	\be \label{eq:sumofnormals}
		u_{k-1} \times u_{k} = u_{k}\times u_{k+1} = 1 \ \ \Rightarrow \ \  u_{k-1} + u_{k+1} = a_k u_{k}, \ \ \ \ a_k \in \mathbb{Z}.
	\ee
	Then, we consider the contributions from these two wedges that have the same denominator inside their exponentials, i.e. 
	\be \label{eq:blockcancellation} \begin{split}
		&( e^{2\pi i \frac{z }{\underline \omega_i \times u_k } } | e^{2\pi i \frac{\omega_1}{\underline \omega_i \times u_k } }, e^{2\pi i \frac{\underline\omega_i \times u_{k+1} }{\underline \omega_i \times u_k } } )_\infty 
		( e^{2\pi i \frac{z + \underline \omega_i \times u_{k-1} }{\underline \omega_i \times u_k } } | e^{2\pi i \frac{\omega_1}{\underline \omega_i \times u_k } }, e^{2\pi i \frac{ \underline \omega_i \times u_{k-1} }{\underline \omega_i \times u_{k} } } )_\infty \\
		&= ( e^{2\pi i \frac{z}{\underline \omega_i \times u_k } } | e^{2\pi i \frac{\omega_1}{\underline \omega_i \times u_k } }, e^{2\pi i \frac{\underline\omega_i \times u_{k+1} }{\underline \omega_i \times u_k } } )_\infty
		 ( e^{2\pi i \frac{z}{\underline \omega_i \times u_k } } | e^{2\pi i \frac{\omega_1}{\underline \omega_i \times u_k } }, e^{-2\pi i \frac{ \underline \omega_i \times u_{k-1} }{\underline \omega_i \times u_{k} } } )^{-1}_\infty \\
		&=  ( e^{2\pi i \frac{z}{\underline \omega_i \times u_k } } | e^{2\pi i \frac{\omega_1}{\underline \omega_i \times u_k } }, e^{2\pi i \frac{\underline\omega_i \times u_{k+1} }{\underline \omega_i \times u_k } } )_\infty
		 ( e^{2\pi i \frac{z}{\underline \omega_i \times u_k } } | e^{2\pi i \frac{\omega_1}{\underline \omega_i \times u_k } }, e^{2\pi i \frac{\underline \omega_i \times (u_{k+1} - a_k u_k ) }{\underline \omega_i \times u_{k} } } )^{-1}_\infty \\
		&= 1.
	\end{split} 
	\ee
	where one makes use the periodicity and inversion properties of the q-shifted factorial, as well as the relation \eqref{eq:sumofnormals}. This means that almost every factor in $A_2$ cancels, and the only ones that are left are the ones coming from the edge of the large wedges, i.e. the ones bounded by either $u_{i,0},u_{i,1}$ or $u_{i,N_i}, u_{i,N_i+1}$.

	Let's consider such a case, where for convenience we call the involved lines $u_0,u_1$ and $u_2$ and let $u_1$ be the boundary between the two faces. Let also $\underline \omega_1, \underline \omega_2$ be the two corresponding $\underline \omega_i$-vectors. The faces which the line $u_1$ divides have normals $v_1 =(1,-L_1)$ and $v_2 =(1, -L_2)$. Since $u_1$ is normal to the line along the intersection of these two faces,
	$u_1$ is parallel to $L_1 - L_2$. From the goodness of the cone, the vector $L_1 - L_2$ is primitive, as is $u_1$, and hence $u_1 = L_2 - L_1$, where the right sign is fixed by the convention that the normals $u_i$ point counter clockwise. This relation holds for any line that separates two wedges $W_i,W_{i+1}$, and it implies that 
	\be \label{eq:omegaparamrel}
		\underline \omega_1 \times u_1 = \underline \omega_2 \times u_2.
	\ee
	Also from the definition of $\underline \omega_i$ we have from $u_1 = L_2 - L_1$ that
	\be \label{eq:omega2}
		\underline \omega_2 = \underline \omega_1 + \omega_1 u_1,
	\ee
	The contribution to $A_2$ from the two sides of $u_1$ is given by 
	\[
		( e^{2\pi i \frac{z }{\underline \omega_1 \times u_1 } }| e^{2\pi i \frac{\omega_1}{\underline \omega_1 \times u_1 }} , e^{2\pi i \frac{\underline \omega_1 \times u_0}{\underline \omega_1 \times u_1 }} )_\infty
		( e^{2\pi i \frac{z +\underline \omega_2 \times u_2 }{\underline \omega_2 \times u_1 } }| e^{2\pi i \frac{\omega_1}{\underline \omega_2 \times u_1 }} , e^{2\pi i \frac{\underline \omega_2 \times u_2}{\underline \omega_2 \times u_1 }} )_\infty . 
	\]
	Using periodicity and the functional equations that the q-factorial enjoys as well as the relations \eqref{eq:sumofnormals}, \eqref{eq:omega2}  and \eqref{eq:omegaparamrel}, we see that this is equal to 
	\begin{align}
		&( e^{2\pi i \frac{z}{\underline \omega_1 \times u_1 } }| e^{2\pi i \frac{\omega_1}{\underline \omega_1 \times u_1 }} , e^{2\pi i \frac{\underline \omega_1 \times u_0}{\underline \omega_1 \times u_1 }} )_\infty
		 ( e^{2\pi i \frac{z }{\underline \omega_1 \times u_1 } }| e^{2\pi i \frac{\omega_1}{\underline \omega_1 \times u_1 }} , e^{-2\pi i \frac{\underline \omega_2 \times u_2}{\underline \omega_1 \times u_1 }} ) ^{-1}_\infty \nn \\
		=& ( e^{2\pi i \frac{z}{\underline \omega_1 \times u_1 } }| e^{2\pi i \frac{\omega_1}{\underline \omega_1 \times u_1 }} , e^{2\pi i \frac{\underline \omega_1 \times u_0}{\underline \omega_1 \times u_1 }} )_\infty
		 ( e^{2\pi i \frac{z }{\underline \omega_1 \times u_1 } }| e^{2\pi i \frac{\omega_1}{\underline \omega_1 \times u_1 }} , e^{-2\pi i \frac{\underline \omega_2 \times ( a u_1 - u_0 ) }{\underline \omega_1 \times u_1 }} )^{-1}_\infty \nn		\\
		=& ( e^{2\pi i \frac{z}{\underline \omega_1 \times u_1 } }| e^{2\pi i \frac{\omega_1}{\underline \omega_1 \times u_1 }} , e^{2\pi i \frac{\underline \omega_1 \times u_0}{\underline \omega_1 \times u_1 }} )_\infty
		( e^{2\pi i \frac{z }{\underline \omega_1 \times u_1 } }| e^{2\pi i \frac{\omega_1}{\underline \omega_1 \times u_1 }} , e^{2\pi i \frac{\underline \omega_1  \times  u_0  + \omega_1 }{\underline \omega_1 \times u_1 }} )^{-1}_\infty \nn . 
	\end{align}
	Moreover, we use proposition \ref{prop:gluing} which allows us to combine this into the single q-shifted factorial
	\[
			( e^{ 2\pi i \frac{z}{\underline \omega_1 \times u_1} } | e^{ 2\pi i \frac{\underline \omega_2 \times u_2 }{\underline\omega_1 \times u_1 }}, e^{2\pi i \frac{\underline \omega_1 \times u_0}{\underline \omega_1 \times u_1} } )_\infty , 
	\]	
	where we also used \eqref{eq:omega2}. 
	Note that if $v_1 = (1,-L_1)$, $v_2 = (1,-L_2)$ it follows from $u_1 = L_2 - L_1$ that 
	\[
		\det [ v_1,v_2,\underline \omega ] = \omega_1 ( L_1 \times L_2) + \omega_2 ( L_2^2 - L_1^3) + \omega_3 ( L_1^2 - L_2^2) = \underline \omega_1 \times u_1.  
	\]
	Also note that if we let $n=(0,-u_2)$, then 
	\[
		\det [v_1,v_2, n] = \det [ v_1-v_2,v_2,n] = \begin{pmatrix}
				0 & 1 & 0 \\
				u_1 & -L_2 & -u_2
			\end{pmatrix} = -u_2 \times u_1 = 1.
	\]
	and then it is straight forward to check that indeed
	\begin{align}
		\det [ n, \underline \omega, v_2 ] &= \underline \omega_2 \times u_2 , \\
		 \det [ v_1, \underline \omega, n] &= -\underline \omega_1\times u_2 = \underline \omega_1\times u_0 + a \underline \omega_1 \times u_1 , 
	\end{align}
	where the ambiguity in the integer $a$ doesn't matter since it will only enter as $e^{2\pi ia}$. Comparing this with the prescription given in the statement of the theorem, it is a straight forward exercise to see that the above parameters indeed match.

	Finally, we consider the factor $B' = B e^{-i\pi \frac{z}{\omega_1} + \frac{i \pi}{2}}$. The logarithm of this is given by 
		\begin{align*}
		\log B' &= -\frac{\pi i }{6} \sum_{i=1}^N \sum_{j=0}^{N_i}  B_{33} (z + \underline \omega_i \times u_{i,j}  | \omega_1,  \underline \omega_i \times u_{i,j}, \underline \omega_i \times u_{i,j+1} ) -i\pi \frac{z}{\omega_1}  + \frac{i \pi}{2} . 
		\end{align*}
	By following a procedure analogous to the one used in $\zeta_r^C$ case, it is possible to resolve the sum in equation \eqref{eq:genBernoulliDef}, using the 1-Gorenstein condition as above, and find that the above is indeed equal to $B_{3,3}^C ( z | \underline \omega )$. This completes the proof. 		
\end{proof}

The two above results for $S_2^C$ and $S_3^C$ reduces to the previously known infinite product representations of $S_2$ and $S_3$ if one chooses the standard cones $\mathbb{R}^2_{\geq 0}$ or $\mathbb{R}^3_{\geq 0}$, and the action of the $SL_{r+1}(\mathbb{Z})$ on the parameters of the multiple q-factorials is essentially the same as the group action given in equation \eqref{eq:groupaction}. In the next section, we will show a very similar factorization property for $G_1^C$ and $G_2^C$. 

\section{Generalized multiple elliptic gamma functions }\label{sec:generalizedgamma}
\begin{definition} Let $C$ be a good cone in $\mathbb{R}^{r+1}$, and assume that $\mathrm{Im}(\underline \omega) \in (C^*)^\circ$. We define the \emph{generalized multiple elliptic gamma function} associated to $C$ as 
\be
	G_r^C ( z | \underline \tau ) = \prod_{n \in C \cap \mathbb{Z}^r }( 1 - e^{2\pi i ( z + n\cdot \underline \omega )} )^{(-1)^r} \prod_{n\in C^{\circ} \cap \mathbb{Z}^r} ( 1 - e^{2\pi i (- z + n \cdot \underline \omega )}). 
\ee
\end{definition}
This definition closely mimics one way of writing the usual multiple elliptic gamma functions, as shown in equation \eqref{eq:GrDef2}, and if one chooses the standard cone $C = \mathbb{R}^r_{\geq 0}$ the function is exactly the ordinary $G_r$. Notice that the usual definition uses the q-shifted factorials, which has a natural extension allowing $\omega_i \in \mathbb{C} - \mathbb{R}$, whereas here we require that $\mathrm{Im} (\underline \omega)$ is strictly inside the dual of the cone, so that the product above converges. 

For $r=1$, this definition is essentially the same as the gamma functions associated to wedges defined in \cite{FHRZ}. 
\subsection{Factorization property of $G_1^C$}

\begin{theorem}[Factorization property of $G_1^C$] \label{thm:G1Cfactorization}
	Let $C$ be a 2-dimensional good cone defined by the two normals $\{ v_1, v_2 \}$. Then the associated generalized elliptic gamma function can be written as
	\be
		G_1^C ( z | \underline \omega ) = e^{ \frac{\pi i }{3} B_{3,3}^{\tilde C} (z | \underline \omega , -1) } \prod_{f=1}^2 (SK_f)^{*} G_1( z | \underline \omega ) , 
	\ee
	where $S, K_f$ are defined in section \ref{sec:ConesModularity}, and where $g\in SL_{3} (\mathbb{Z})$ acts on $G_1$ by the group action given in  equation \eqref{eq:groupaction} on its parameters. $\tilde C$ is a 3d cone with normals $(v_1,0),(v_2,0),(0,0,1)$, and $B_{3,3}^{\tilde C}$ is its associated Bernoulli polynomial. 	
\end{theorem}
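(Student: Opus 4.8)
The plan is to recognize that this identity is the cone-generalization of Narukawa's modular transformation for the elliptic gamma function, Theorem \ref{thm:G2modularity} with $r=1$: for the standard cone $C=\mathbb{R}^2_{\geq 0}$ one has $\tilde C=\mathbb{R}^3_{\geq 0}$, the two factors $(SK_f)^*G_1$ collapse onto the $k=0,1$ factors of that theorem, and the coefficient $\frac{\pi i}{3}=\frac{2\pi i}{3!}$ already agrees. For a general cone the mechanics run parallel to the proof of Theorem \ref{thm:S3Cfactorization}, because $G_1$ is built from double-$q$-shifted factorials whose recombinations are controlled by Proposition \ref{prop:gluing}. Concretely, I would (i) subdivide $C$ by Lemma \ref{lem:subdivision} to write $G_1^C$ as a finite product of ordinary $G_1$'s; (ii) expand each $G_1$ into $q$-factorials by \eqref{13b}; (iii) cancel the contributions of the interior subdivision lines; (iv) reassemble the two surviving boundary contributions into $(SK_f)^*G_1$; and (v) collect the phases into the Bernoulli prefactor.

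For step (i), since $\mathrm{Im}(\underline\omega)\in (C^*)^\circ$ the defining product converges absolutely (as in \eqref{eq:plethystic1}), so it may be reorganized over the $SL_2(\mathbb{Z})$ wedges of Lemma \ref{lem:subdivision}, producing the $G_1$-analogue of \eqref{eq:S2CinS2},
\[
	G_1^C(z|\underline\omega)=G_1(z|\underline\omega\times u_n,\underline\omega\times u_{n+1})\prod_{j=0}^{n-1}G_1(z+\underline\omega\times u_j|\underline\omega\times u_j,\underline\omega\times u_{j+1}),
\]
with $u_0=v_1$, $u_{n+1}=-v_2$. The care needed here is the edge bookkeeping: the product over $C$ (with exponent $(-1)^r$) and the product over $C^\circ$ (with reflected argument $-z$) must be split over the wedges with complementary boundary conventions so that they pair up into one genuine $G_1$ per wedge, exactly as the two representations of $\Gamma_3^C$ were reconciled in Theorem \ref{thm:S3Cfactorization}.

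In steps (ii)--(iv) I would expand via \eqref{13b} and cancel the blocks attached to the interior lines $u_1,\dots,u_n$. Using $u_{j-1}\times u_j=u_j\times u_{j+1}=1$, hence $\underline\omega\times u_{j-1}+\underline\omega\times u_{j+1}=a_j(\underline\omega\times u_j)$ with $a_j\in\mathbb{Z}$, the two blocks sharing the modulus $\underline\omega\times u_j$ cancel via the inversion identities \eqref{eq:blockinversion}, precisely as in \eqref{eq:blockcancellation}. Only the boundary blocks at $u_0=v_1$ and $u_{n+1}=-v_2$ survive; these I would combine by Proposition \ref{prop:gluing} (equivalently Proposition \ref{prop:G2gluing}) and identify with $(SK_f)^*G_1$ through $\underline\tau^f=SK_f(\underline\omega,1)$ and the action \eqref{eq:groupaction}, carrying out the sign discussion of section \ref{sec:ConesModularity} (the sign of $\det[x_f,v^f]$ forcing $\det\tilde K_f=\pm1$) for each of the two faces as in the closing lines of Theorem \ref{thm:S2Cfactorization}.

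The hard part is step (v). Unlike the multiple-sine case, where the Bernoulli term is delivered ready-made by the factorization formula \eqref{eq:SrFactorization}, the elliptic gamma function carries no intrinsic Bernoulli polynomial, so the entire prefactor must be produced by the modular recombination that turns the $SL_2$-rearranged survivors into the $S$-inverted arguments of $(SK_f)^*G_1$, and then shown to collapse to a single polynomial. I would verify this by resolving the generating function \eqref{eq:genBernoulliDef} of $B_{3,3}^{\tilde C}$ for the auxiliary $3$d cone $\tilde C$ with normals $(v_1,0),(v_2,0),(0,0,1)$ over the same subdivision, in the manner of the $\log B'$ computation closing the proof of Theorem \ref{thm:S3Cfactorization}; the extra normal $(0,0,1)$ is what supplies the $-1$ slot in the argument $(\underline\omega,-1)$ and pins down the coefficient $\frac{\pi i}{3}$.
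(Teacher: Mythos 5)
Your skeleton (subdivide by Lemma \ref{lem:subdivision}, telescope the interior lines, identify the two survivors with $(SK_f)^*G_1$, assemble the Bernoulli prefactor over $\tilde C$) matches the paper's, and your steps (i) and the endgame identifications are fine. But there is a genuine gap in the ordering of steps (ii)--(iv): you propose to expand each wedge factor $G_1(z+\underline\omega\times u_j\,|\,\underline\omega\times u_j,\underline\omega\times u_{j+1})$ into raw $q$-factorials via \eqref{13b} and cancel the interior blocks ``precisely as in \eqref{eq:blockcancellation}''. That cancellation cannot happen at the raw level. The product of these wedge factors is, by construction, just a rearrangement of the absolutely convergent product over all of $C\cap\mathbb{Z}^2$ (and $C^\circ\cap\mathbb{Z}^2$); if the interior blocks cancelled, the interior lattice points would have disappeared from the product, which is absurd. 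Mechanically, the cancellation in \eqref{eq:blockcancellation} relies on the periodicity trick $e^{2\pi i\,\underline\omega\times(u_{k-1}+u_{k+1})/(\underline\omega\times u_k)}=e^{2\pi i a_k}e^{-2\pi i\,\underline\omega\times u_{k+1}/(\underline\omega\times u_k)}$ with $e^{2\pi i a_k}=1$, which only works because all exponents share the denominator $\underline\omega\times u_k$; the raw parameters $e^{2\pi i\,\underline\omega\times u_j}$ carry no such denominator, so $a_k(\underline\omega\times u_k)$ is not an integer and nothing telescopes.

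The paper's proof resolves this by applying Theorem \ref{thm:G2modularity} (with $r=1$, i.e.\ the Felder--Varchenko modularity) to \emph{every} wedge factor \emph{before} any cancellation. This single step does two things at once: it produces, for each of the $n+1$ wedges, a pair of $S$-inverted $G_1$'s whose arguments do share the denominators $\underline\omega\times u_j$ (so that \eqref{eq:blockcancellation} applies and $2n$ of the $2(n+1)$ factors cancel pairwise, leaving one per boundary face), and it produces $n+1$ exponentials of $B_{3,3}$ which sum to $B_{3,3}^{\tilde C}(z|\underline\omega,-1)$ upon resolving \eqref{eq:genBernoulliDef} over the same subdivision. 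Your placement of the modular step only at the end, applied to the two survivors, would therefore also fail quantitatively: it would generate only two $B_{3,3}$ terms, whereas $B_{3,3}^{\tilde C}$ is a sum over the full subdivision and genuinely depends on all the interior lines. To repair the proposal, move the invocation of Theorem \ref{thm:G2modularity} from step (v) to immediately after step (i), applied to each wedge factor; the rest of your outline then goes through as written.
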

\begin{proof}
This proof needs again the subdivision of the wedge as constructed in lemma \ref{lem:subdivision}, but now in order to perform an infinite product rather than a sum. The factorization of the original infinite product into products over smaller wedges is well defined. To see this,  we just need the obvious generalization of equation \eqref{eq:plethystic1}, that the logarithm of the product gives a sum which converges absolutely, which allows us to rearrange the product in this way. The product over each subdivided wedge will converge, and after an $SL_2(\mathbb{Z})$ transformation gives a q-factorial,  
\be
	 \prod_{n \in C \cap \mathbb{Z}^2 }( 1 - e^{2\pi i ( z + n\cdot \underline \omega )} ) = (e^{2\pi iz} | e^{2\pi i(\underline \omega \times u_n)}, e^{2\pi i(\underline \omega \times u_{n+1})})_\infty \prod_{j=0}^{n-1} ( e^{2\pi i(z + \underline \omega \times u_j)} | e^{2\pi i(\underline \omega \times u_j)}, e^{2\pi i(\underline \omega \times u_{j+1})} )_\infty .
\ee
Using this, and the alternative choice for including lines and taking care to only include the interior of the cone for the other infinite product, it follows that 
\be
	G_1^C ( z | \underline \omega ) = G_1 ( z | \underline \omega \times u_n, \underline \omega \times u_{n+1}) \prod_{j=0}^{n-1} G_1( z + \underline \omega \times u_j | \underline \omega \times u_j, \underline \omega \times u_{j+1} ).
\ee	
Next, we apply the result of theorem \ref{thm:G2modularity}, in order to write this as 
\be
\begin{split}
	G_1^C ( z | \underline \omega ) = B \times G_1 ( \frac{z}{\underline \omega \times u_n} | -\frac{1}{\underline \omega \times u_n}, \frac{u_{n+1}}{\underline \omega \times u_n} ) G_1 ( \frac{z}{\underline \omega \times u_{n+1}} | -\frac{1}{\underline \omega \times u_{n+1}},\frac{u_{n}}{\underline \omega \times u_{n+1}} )\\
	\times \prod_{j=0}^{n-1} G_1 ( \frac{z + \underline \omega \times u_n}{\underline \omega \times u_n} | -\frac{1}{\underline \omega \times u_n}, \frac{u_{n+1}}{\underline \omega \times u_n} ) G_1 ( \frac{z+ \underline \omega \times u_n}{\underline \omega \times u_{n+1}} | -\frac{1}{\underline \omega \times u_{n+1}},\frac{u_{n}}{\underline \omega \times u_{n+1}} )		, 
	\end{split}
\ee
where $B$ is the exponential of a sum of $B_{3,3}$ terms. A quick computation starting from the definition of $B_{r,n}^C$ and using the same procedure as in the proof of theorem \ref{thm:S3Cfactorization} gives 
\be
	B = e^{\frac{\pi i}{3} B_{3,3}^{\tilde C} ( z | (\underline \omega , -1))}.
\ee
To deal with the product over $G_1$'s, one considers the product over two factors with the same denominator in their arguments. Then, expressing each $G_1$ as a product of two q-factorials and matching the q-factorials up appropriately, the calculation seen in equation \eqref{eq:blockcancellation} goes through again and shows that all paired up $G_1$'s indeed cancel. So in the end we are left with only two remaining $G_1$-functions, 
\[
	G_1 ( \frac{z}{\underline \omega \times u_{n+1}} | -\frac{1}{\underline \omega \times u_{n+1}},\frac{\underline \omega \times u_{n}}{\underline \omega \times u_{n+1}} ) G_1 ( \frac{z}{\underline \omega \times u_{0}} | -\frac{1}{\underline \omega \times u_{0}},\frac{\underline \omega \times u_{1}}{\underline \omega \times u_{0}} ).
\]
Repeating the arguments from the end of the proof of theorem \ref{thm:S2Cfactorization}, one can easily compare this with the prescription of the theorem, and up to ordering, which of course do not matter since $G_r(z|\underline \omega)$ is symmetric under permutation of $\underline \omega$, we see that it precisely matches the group action of $S K_F$ on $(z|(\underline \omega,1))$ where the final 1 is removed.

\end{proof}

\subsection{Factorization property of $G_2^C$}
\begin{theorem}[Factorization property of $G_2^C$]
\label{thm:G2Cfactorization}
	Let $C$ be a good 3-dimensional cone satisfying the 1-Gorenstein condition with normals $\{ v_i \}_{i=1}^N$, and a set of 1d faces $\Delta_1^C$. Then 
	\be \label{eq:G2Cfactorization1}
		G_2^C ( z | \underline \omega ) = e^{\frac{\pi i }{12} B_{4,4}^{\tilde C} ( z | \underline \omega,-1) } \prod_{f \in \Delta^C_1} (SK_F)^* G_2 (z |\underline \omega ) .
	\ee
	where  $\tilde C$ is the 4d cone with normals $\{ (v_1,0), \ldots, (v_N,0), (0,0,0,1) \}$, $S, K_f \in SL_4 (\mathbb{Z})$ are defined as in section \ref{sec:ConesModularity} and $(SK_f)$ acts on $G_2$ by acting on its parameters $(z | \underline \omega ) $ as specified by equation \eqref{eq:groupaction}. 
\end{theorem}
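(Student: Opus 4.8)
The plan is to mirror the strategy used for $S_3^C$ in theorem \ref{thm:S3Cfactorization}, but to carry the elliptic gamma functions through the argument and to use the modular property of theorem \ref{thm:G2modularity} in place of the $S_3$ factorization \eqref{eq:SrFactorization}; the combinatorial skeleton of the cancellations is then identical to the $G_1^C$ computation of theorem \ref{thm:G1Cfactorization}. First I would use the $1$-Gorenstein condition to choose coordinates in which the Gorenstein vector is $\xi=(1,0,0)$ and every normal reads $v_i=(1,-L_i)$ for a two-vector $L_i$, so that a lattice point $n=(n_1,n_2,n_3)\in C$ is characterized by $n_1\geq L_i^2 n_2 + L_i^3 n_3$. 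Projecting onto the $(n_2,n_3)$-plane splits it into wedges $W_i$ attached to the faces, exactly as in theorem \ref{thm:S3Cfactorization}. Since the logarithm of the defining product converges absolutely (the obvious generalization of \eqref{eq:plethystic1}), I may rearrange the infinite product freely: the fibre over the origin of the $(n_2,n_3)$-plane, i.e. the $n_1$-axis, contributes the $r=0$ function $G_0(z|\omega_1)=\theta_0(z|\omega_1)$, while over each $W_i$ I apply lemma \ref{lem:subdivision} to cut it into unimodular subwedges. Choosing opposite boundary-inclusion conventions for the $C$-product and the $C^\circ$-product so that they pair up, each subwedge assembles into an ordinary $G_2$, yielding
\[
G_2^C(z|\underline\omega)=G_0(z|\omega_1)\prod_{i=1}^N\prod_{j=0}^{N_i}G_2\big(z+\underline\omega_i\times u_{i,j}\,\big|\,\omega_1,\underline\omega_i\times u_{i,j},\underline\omega_i\times u_{i,j+1}\big),
\]
with $\underline\omega_i=(\omega_2+L_i^2\omega_1,\omega_3+L_i^3\omega_1)$ and $\{u_{i,j}\}$ the subdivision of lemma \ref{lem:subdivision}.

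Next I would apply theorem \ref{thm:G2modularity} to every $G_2$ factor above. Each factor produces an exponential of a $B_{4,4}$ evaluated on $(\omega_1,\underline\omega_i\times u_{i,j},\underline\omega_i\times u_{i,j+1},-1)$ together with three transformed gamma functions $G_2\big(\tfrac{z'}{\tau}\,\big|\,\ldots,-\tfrac1\tau\big)$, one for each $\tau\in\{\omega_1,\underline\omega_i\times u_{i,j},\underline\omega_i\times u_{i,j+1}\}$, where $z'=z+\underline\omega_i\times u_{i,j}$. Collecting these, I split the output into the Bernoulli prefactor $B$, the family $A_1$ of the $\tau=\omega_1$ factors (those with $\omega_1$ in the denominator), and the family $A_2$ of the remaining two. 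The $A_2$ family is handled exactly as in theorems \ref{thm:S3Cfactorization} and \ref{thm:G1Cfactorization}: writing each $G_2$ as a pair of triple q-shifted factorials, neighbouring subwedges inside one $W_i$ cancel pairwise by the block cancellation \eqref{eq:blockcancellation} (using $u_{k-1}+u_{k+1}\in\mathbb{Z}u_k$), and the surviving boundary contributions of adjacent $W_i$'s are fused by proposition \ref{prop:G2gluing}; only one $G_2$ per genuine $1$d face $f\in\Delta_1^C$ remains.

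The family $A_1$ is where the real work is. The subwedges $\{u_{i,j}\}$ over all $i,j$ form a complete fan of the $(n_2,n_3)$-plane, so I would collapse $A_1$ by slicing each $G_2\big(\tfrac{z'}{\omega_1}\,\big|\,\tfrac{\cdot}{\omega_1},\tfrac{\cdot}{\omega_1},-\tfrac1{\omega_1}\big)$ along its fixed $e^{-2\pi i/\omega_1}$ direction into double q-shifted factorials and applying corollary \ref{cor:A1cancellation} in the two wedge directions slice by slice. The complete-fan collapse then reassembles into a single-parameter factor equal to $G_0(z|\omega_1)^{-1}$ up to an explicit phase, precisely the $G_2$-analogue of the identity $A_1=(1-e^{2\pi iz/\omega_1})^{-1}$ used for $S_3^C$. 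This cancels the explicit $G_0(z|\omega_1)$ prefactor and leaves only a phase, which I would absorb into $B$. Finally, resolving the defining sum \eqref{eq:genBernoulliDef} for the cone $\tilde C$ (normals $(v_i,0)$ and $(0,0,0,1)$) by the same wedge subdivision and Gorenstein shift used above shows that the accumulated $B_{4,4}$ terms together with the leftover phase combine into $\exp\{\tfrac{\pi i}{12}B_{4,4}^{\tilde C}(z|\underline\omega,-1)\}$. It then remains to identify the parameters of the surviving per-face $G_2$'s with $(SK_f)^*G_2(z|\underline\omega)$; this is the same determinant computation as at the end of theorem \ref{thm:S3Cfactorization}, using $\det[v_1,v_2,\underline\omega]=\underline\omega_1\times u_1$ and the auxiliary vector $n=(0,-u_2)$ to read off the entries of $SK_f$, together with the permutation symmetry of $G_2$ to fix the ordering.

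I expect the main obstacle to be the $A_1$ collapse and its cancellation against the theta factor $G_0(z|\omega_1)$: unlike the $G_1^C$ case there is a genuine function, not a pure phase, to dispose of, so one must establish the correct $G_2$-level analogue of corollary \ref{cor:A1cancellation} and then track the resulting phase carefully enough that it merges with the $B_{4,4}$ contributions into the single generalized Bernoulli polynomial $B_{4,4}^{\tilde C}$. The remaining steps are bookkeeping that parallels proofs already carried out in the paper.
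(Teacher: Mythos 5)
Your proposal follows the paper's own proof essentially step for step: the Gorenstein coordinates $v_i=(1,-L_i)$ and wedge subdivision giving $G_2^C(z|\underline\omega)=G_0(z|\omega_1)\prod_{i,j}G_2(z+\underline\omega_i\times u_{i,j}|\omega_1,\underline\omega_i\times u_{i,j},\underline\omega_i\times u_{i,j+1})$, the application of theorem \ref{thm:G2modularity} to each factor, the split into $B$, $A_1$, $A_2$ with the same cancellation mechanisms (the block cancellation \eqref{eq:blockcancellation} within faces, proposition \ref{prop:G2gluing} across faces, and a complete-fan generalization of corollary \ref{cor:A1cancellation} for $A_1$), and the reassembly of all phases into $B_{4,4}^{\tilde C}(z|\underline\omega,-1)$. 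The only cosmetic difference is that the paper states $A_1=G_0(\frac{z}{\omega_1}|-\frac{1}{\omega_1})$ and then invokes the modular property of $\theta_0$ to merge it with the $G_0(z|\omega_1)$ prefactor, whereas you fold that modular transformation into the claim that $A_1$ equals $G_0(z|\omega_1)^{-1}$ times an explicit phase --- the two formulations are equivalent, and you correctly flag this $A_1$ collapse as the step the paper itself treats most briefly.
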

\begin{proof}
The proof proceeds along the same lines as the proof of theorem \ref{thm:S3Cfactorization}, for this reason we will not write it in as much detail.

We use the 1-Gorenstein condition to let all the normals have the form $v_i = (1 , -L_i )$, i.e. choose $\xi = (1,0,0)$, and then consider the product over each face separately. 
At this point it is possible to split each face into the subdivided wedges of lemma \ref{lem:subdivision}, so that we get a set of subdivided normals $\{ u_{i,j} \}$, where each small wedge has normals forming a $SL_2(\mathbb{Z})$ basis. The product over each of these wedges can then be transformed into a product over $\mathbb{Z}^3_{\geq 0}$ through a shift and a $SL_2(\mathbb{Z})$ transformation. Doing this, and remembering to include the contribution from $n_2=n_3=0$, we get
\begin{align}
	G_2^C ( z | \underline \omega ) =&  \prod_{i=1}^N \prod_{j=0}^{N_i} \prod_{n\in \mathbb{Z}^3_{\geq 0}} ( 1 - e^{2\pi i ( z + \underline \omega_i \times u_{i,j} + n_1 \omega_1 +  n_2 \underline \omega_i \times u_{i,j} + n_3 \underline \omega_i \times u_{i,j+1} ) } ) \nn  \\
	&\times( 1 - e^{2\pi i ( \omega_1 - z + \underline \omega_i \times u_{i,j+1} + n_1 \omega_1 +  n_2 \underline \omega_i \times u_{i,j} + n_3 \underline \omega_i \times u_{i,j+1} ) } ) \times \big ( \prod_{n_1 = 0 }^\infty ( 1 - e^{2\pi i ( z + \omega_1 n_1 ) } ) ( 1 - e^{2\pi i (\omega_1 -  z + \omega_1 n_1 ) } ) \big ) \nn.
\end{align}
All these factors combine into ordinary $G_2$ functions except for the last product which combines into $G_0 ( z | \omega_1)$ 
 \be
 	G_2^C ( z | \underline \omega ) =  G_0 ( z | \omega_1 ) \prod_{i=1}^N \prod_{j=0}^{N_i} G_2 ( z + \underline \omega_i \times u_{i,j} | \omega_1, \underline \omega_ i \times u_{i,j}, \underline \omega_i \times u_{i,j+1} ) .
 \ee	
We apply the first version of theorem \ref{thm:G2modularity}, i.e. the modular property of $G_2$, to rewrite this as 
	\begin{align} \label{eq:usingG2mod}
		&G_2^C ( z | \underline \omega ) =   G_0 ( z | \omega_1 ) \times B \times \overbrace{\prod_{i=1}^N \prod_{j=0}^{N_i} G_2 ( \frac{z}{\omega_1} | -\frac{1}{\omega_1} , \frac{ \underline \omega_i \times u_{i,j} }{\omega_1}, \frac{ \underline \omega_i \times u_{i,j+1} }{\omega_1} )}^{A_1} \times 		
		\nn \\
		& \underbrace{ G_2 ( \frac{z}{ \underline \omega_i \times u_{i,j}} | \frac{\omega_1}{ \underline \omega_i \times u_{i,j}} , - \frac{1}{ \underline \omega_i \times u_{i,j}}, \frac{ \underline \omega_i \times u_{i,j+1}}{ \underline \omega_i \times u_{i,j}} ) 
		G_2 ( \frac{z}{ \underline \omega_i \times u_{i,j+1}} | \frac{\omega_1}{ \underline \omega_i \times u_{i,j+1}} ,  \frac{ \underline \omega_i \times u_{i,j}}{ \underline \omega_i \times u_{i,j+1}},- \frac{1}{ \underline \omega_i \times u_{i,j+1}} ))}_{A_2},
	\end{align}
	where
	\[
	 	B = \prod_{i=1}^N \prod_{j=0}^{N_i} e^{\frac{\pi i }{12} B_{4,4} ( z + \underline \omega_i \times u_{i,j} | \omega_1, \underline \omega_ i \times u_{i,j}, \underline \omega_i \times u_{i,j+1} ,-1 ) }.
	\]
	Now, using that $u_{i,j-1} + u_{i,j+1} = \mathbb{Z} u_{i,j}$, and that $\underline \omega_{i+1} = \underline \omega_i + u_{i,N_i+1}$ as shown in the previous proof, and properties of $G_2$, one shows that every factor of $A_2$ cancels, except for the crossing from one face of the cone to another, just as in the proof of the factorization of $S_3^C$. The contributions from the two factors on either side of a crossing between two wedges can be written as 
	\[
		\frac{G_2 ( \frac{z} { \underline \omega_i \times u_{i,N_i + 1}} | \frac{\omega_1} { \underline \omega_i \times u_{i,N_i + 1}}, \frac{\underline \omega_i \times u_{i+1,1}} { \underline \omega_i \times u_{i,N_i + 1}}, -\frac{1} { \underline \omega_i \times u_{i,N_i + 1} } )}{ G_2 ( \frac{z} { \underline \omega_i \times u_{i,N_i + 1}} | \frac{\omega_1} { \underline \omega_i \times u_{i,N_i + 1}}, \frac{\underline \omega_i \times u_{i+1,1}} { \underline \omega_i \times u_{i,N_i + 1}}, -\frac{1} { \underline \omega_i \times u_{i,N_i + 1} } ) } , 
	\]
	and then using proposition \ref{prop:G2gluing}, the above combines into 
	\[
		G_2 ( \frac{z}{\underline \omega_i \times u_{i,N_i+1} } | \frac{\underline \omega_{i+1} \times u_{i+1,1} } {\underline \omega_i \times u_{i,N_i+1}  }, \frac{-\underline \omega_i \times u_{i+1,1} } {\underline \omega_i \times u_{i,N_i+1} }, - \frac 1 {\underline \omega_i \times u_{i,N_i+1} } ) .
	\]
	So from the $A_2$ factor, we get one such factor from each intersection of two faces, and we see that the parameters are the same as in the proof of theorem \ref{thm:S3Cfactorization}. So we can write
	\[
	 	A_2 = \prod_{f\in\Delta_1^C} (SK_f)^*G_2 ( z | \underline \omega ).
	\]
	
	Next, by means of a slight generalization of corollary \ref{cor:A1cancellation} one can show that $A_1 = G_0( \frac{z}{\omega_1} | - \frac{1}{\omega_1} )$.	
	The two $G_0$ functions we now have combine into an exponential of a polynomial in $z$ due to the following modular property of $G_0 = \theta_0$ :
	\be
		\theta_0 \left ( \frac{z}{\tau} , -\frac{1}{\tau} \right )= e^{
-\pi i \,B_{2,2} (z|(\underline{\tau},-1))} \theta_0 \left (z,\tau \right )
	\ee
	which combines with the $B$-factor into what we will call $B'$. One can then compute the sum defining $B_{4,4}^{\tilde C}$ by subdividing the 3d 1-Gorenstein cone as in this proof and then performing the sum over the extra direction. Doing this, one finds that indeed
	\be
		B' = e^{\frac{\pi i}{12} B_{4,4}^{\tilde C} ( z | \underline \omega )}. 
	\ee
	This concludes the proof.

\end{proof}

This and the previous result for $G_1^C$ includes all the previously known modularity results about the elliptic double gamma function $\Gamma$ investigated by Felder and Varchenko \cite{FV,GF_FVKZB} as well as the modularity results for $G_r$ proved by Narukawa in \cite{Narukawa:2003} as special cases for the regular cones. It is also a generalization of the work \cite{FHRZ}, and it could have some interpretation in terms of higher dimensional generalizations of the gerbe defined there. In ongoing work \cite{future_work}, we extend this result for all $r$, using slightly different methods than in the present article.

\begin{corollary} \label{cor:altG2Cfact}
	There is an alternative factorization of $G_2^C$ as 
	\be \label{eq:G2Cfactorization2}
		G_2^C( z | \underline \omega) = e^{ -\frac{\pi i}{12} B_{4,4}^{\tilde C} ( z | \underline \omega, 1) } \prod_{f \in \Delta_1^C} (S^{-1} K_f )^* G_2 ( z|\underline \omega ).
	\ee 
	where $C$ is a good, 1-Gorenstein 3d cone.
\end{corollary}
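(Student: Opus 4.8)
The plan is to rerun the proof of Theorem \ref{thm:G2Cfactorization} essentially verbatim, changing only one ingredient: at the step where each resolved $G_2$ factor is expanded by its modular identity (equation \eqref{eq:usingG2mod}), I would invoke the \emph{second} modular identity of Theorem \ref{thm:G2modularity} rather than the first. The initial resolution of $G_2^C$ into the finite product $G_0(z|\omega_1)\prod_{i,j}G_2(z+\underline\omega_i\times u_{i,j}|\omega_1,\underline\omega_i\times u_{i,j},\underline\omega_i\times u_{i,j+1})$ uses only the $1$-Gorenstein condition and lemma \ref{lem:subdivision}, so it involves no modular input and is reused unchanged. The second identity supplies, for each factor, the prefactor $\exp\{-\frac{2\pi i}{4!}B_{4,4}(z|(\underline\omega,1))\}$ together with the transformed functions $G_2(-z/\omega_k|(-\omega_0/\omega_k,\dots,-\omega_2/\omega_k,-1/\omega_k))$, i.e. with all arguments sign-reversed.

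The one genuinely new point to verify is that this sign-reversed form is exactly the $S^{-1}$ group action. Acting with $S^{-1}$ — the inverse of the matrix in \eqref{eq:Smatrix}, with $+1$ in the upper-right and $-1$ in the lower-left corner — on $(\underline\omega,1)$ produces $(1,\omega_1,\dots,\omega_r,-\omega_0)$, so that the linear fractional transformation \eqref{eq:groupaction} yields precisely $(-z/\omega_0\,|\,-1/\omega_0,-\omega_1/\omega_0,\dots,-\omega_r/\omega_0)$, matching the second-form factors up to the permutation of parameters under which $G_r$ is invariant. Hence replacing $S$ by $S^{-1}$ in $SK_f$ turns each crossing contribution into $(S^{-1}K_f)^*G_2(z|\underline\omega)$, which is the product appearing in the claim.

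With this identification made, the telescoping is identical to before. The $A_2$ factor cancels neighbouring subwedges via $u_{i,j-1}+u_{i,j+1}\in\mathbb{Z}u_{i,j}$ and the functional equations of $G_2$, exactly as in \eqref{eq:blockcancellation}, and adjacent faces recombine through proposition \ref{prop:G2gluing}, now leaving $\prod_{f\in\Delta_1^C}(S^{-1}K_f)^*G_2(z|\underline\omega)$; the $A_1$ factor again collapses by the generalization of corollary \ref{cor:A1cancellation}, this time to $G_0(-z/\omega_1|-1/\omega_1)^{\pm1}$. The remaining work is the Bernoulli bookkeeping: the two residual $\theta_0=G_0$ factors combine through the $r=0$ case of the second modular identity, $\theta_0(-z/\tau|-1/\tau)=e^{\pi i B_{2,2}(z|(\tau,1))}\theta_0(z,\tau)$, and assembling this with the $\exp\{-\frac{2\pi i}{4!}B_{4,4}(\cdot|(\cdot,1))\}$ contributions — by resolving the defining sum \eqref{eq:genBernoulliDef} over the same subdivision as in the main proof — produces $\exp\{-\frac{\pi i}{12}B_{4,4}^{\tilde C}(z|\underline\omega,1)\}$.

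The only step that demands real care is this final sign-tracking through the Bernoulli assembly: one must confirm that the opposite sign of the $G_2$ prefactor and the shift of the auxiliary direction from $-1$ to $+1$ combine consistently to give the argument $(\underline\omega,1)$ and the overall minus sign in the exponent. All the geometric cancellations are word-for-word those of Theorem \ref{thm:G2Cfactorization} and introduce nothing new.
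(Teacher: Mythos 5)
Your proposal is correct and follows exactly the route of the paper's own proof: substitute the second modular identity of Theorem \ref{thm:G2modularity} into equation \eqref{eq:usingG2mod}, observe that the resulting sign reversal of all arguments is precisely the action of $S^{-1}$, and redo the Bernoulli bookkeeping to land on $-\frac{\pi i}{12}B_{4,4}^{\tilde C}(z|\underline\omega,1)$. In fact you spell out the $S^{-1}$ identification and the $\theta_0$ step more explicitly than the paper does, which only states that "the proof goes through with very minor changes."
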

\begin{proof}
This is simply a consequence of the alternative factorization of the usual $G_2$ in theorem \ref{thm:G2modularity}, which we insert into the proof of theorem \ref{thm:G2Cfactorization} in equation \eqref{eq:usingG2mod}, and then the proof goes through with very minor changes. The parameters change sign, and one realizes that we can precisely account for this by replacing the $S$-duality matrix with its inverse $S^{-1}$. Finally, a somewhat tedious but straightforward calculation is needed to see that the new Bernoulli factors one gets is exactly $B_{4,4}^{\tilde C} ( z | (\underline \omega, 1))$, as we would expect from comparison with normal $G_2$.  
\end{proof}

Corollary \ref{cor:altG2Cfact} leads to the following result, which resembles the modularity property of the normal $G_r$ functions  in theorem \ref{thm:G2modularity2}
\begin{proposition}
	\be
		\exp(-\frac{\pi i }{3} B_{3,3}^C ( z | \underline \omega ) ) = \prod_{f\in \Delta_1^C} (S K_f)^* G_1 ( z | \underline \omega ).
	\ee
	where $C$ is a good, 1-Gorenstein 3d cone and we abuse the notation slightly with respect to equation \eqref{eq:groupaction} since we are neglecting the action on the first component. The action of $SK_f$ on $(z|\underline \omega)$ is now taken to be
	\be
		g\cdot ( z | \underline \omega ) = \Bigl(\frac{z}{(g\underline \omega)_4} | \frac{(g\underline \omega)_2 }{(g\underline \omega,)_4} ,\frac{(g\underline \omega)_3 }{(g\underline \omega)_4}  \Bigr). 
	\ee
\end{proposition}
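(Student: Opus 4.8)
The plan is to read the statement off by comparing the two factorizations of $G_2^C$ already at our disposal: the one from Theorem \ref{thm:G2Cfactorization} and the alternative one from Corollary \ref{cor:altG2Cfact}. Since both equal $G_2^C(z|\underline\omega)$, setting them equal gives
\be
	e^{\frac{\pi i}{12}B_{4,4}^{\tilde C}(z|\underline\omega,-1)}\prod_{f\in\Delta_1^C}(SK_f)^* G_2(z|\underline\omega) = e^{-\frac{\pi i}{12}B_{4,4}^{\tilde C}(z|\underline\omega,1)}\prod_{f\in\Delta_1^C}(S^{-1}K_f)^* G_2(z|\underline\omega),
\ee
so that the whole problem reduces to (i) combining, face by face, the pair $(SK_f)^*G_2$ and $(S^{-1}K_f)^*G_2$ into a single $G_1$, and (ii) showing that the two surviving Bernoulli factors collapse into $\exp(-\tfrac{\pi i}{3}B_{3,3}^C)$.

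For step (i), I would write $w = \tilde K_f\underline\omega$, so that $K_f(\underline\omega,1) = (w_1,w_2,w_3,1)$. A direct application of $S$ and $S^{-1}$, which differ only by the sign of their $(1,4)$ and $(4,1)$ entries, gives $SK_f(\underline\omega,1) = (-1,w_2,w_3,w_1)$ and $S^{-1}K_f(\underline\omega,1) = (1,w_2,w_3,-w_1)$. Feeding these through the group action \eqref{eq:groupaction} yields
\be
	(SK_f)^* G_2(z|\underline\omega) = G_2\!\left(\tfrac{z}{w_1}\Big| -\tfrac{1}{w_1},\tfrac{w_2}{w_1},\tfrac{w_3}{w_1}\right), \qquad (S^{-1}K_f)^* G_2(z|\underline\omega) = G_2\!\left(-\tfrac{z}{w_1}\Big| -\tfrac{1}{w_1},-\tfrac{w_2}{w_1},-\tfrac{w_3}{w_1}\right).
\ee
Applying the reflection $G_r(-z|-\underline\omega) = G_r(z|\underline\omega)^{-1}$ to the second factor turns it into $G_2(z/w_1|1/w_1,w_2/w_1,w_3/w_1)^{-1}$, and then the functional equation $G_r(z|\underline\omega)G_r(z|\underline\omega[j]) = G_{r-1}(z|\underline\omega^-(j))^{-1}$, applied to the $\pm1/w_1$ slot, collapses the quotient $(SK_f)^*G_2/(S^{-1}K_f)^*G_2$ into $G_1(z/w_1|w_2/w_1,w_3/w_1)^{-1}$. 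One then checks that $G_1(z/w_1|w_2/w_1,w_3/w_1)$ is precisely $(SK_f)^*G_1(z|\underline\omega)$ under the modified action of the proposition, where the first component of $g(\underline\omega,1)$ is dropped; this is immediate, since that component is exactly the $-1/w_1$ slot eliminated by the functional equation. Taking the product over faces yields $\prod_f(SK_f)^*G_1 = \exp\{\tfrac{\pi i}{12}[B_{4,4}^{\tilde C}(z|\underline\omega,1)+B_{4,4}^{\tilde C}(z|\underline\omega,-1)]\}$.

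For step (ii), I would establish the Bernoulli identity $B_{4,4}^{\tilde C}(z|\underline\omega,1)+B_{4,4}^{\tilde C}(z|\underline\omega,-1) = -4\,B_{3,3}^C(z|\underline\omega)$ directly from the generating function \eqref{eq:genBernoulliDef}. Since $\tilde C = C\times\mathbb{R}_{\geq0}$, its interior factorizes and the defining sum splits as $\big(\sum_{m\in C^\circ\cap\mathbb{Z}^3}e^{-(\underline\omega\cdot m)t}\big)\big(\sum_{k\geq1}e^{-\tau t k}\big)$, so the $\tilde C$ generating function is the $C$ one times $t/(e^{\tau t}-1)$. The elementary identity $\tfrac{1}{e^{-\tau t}-1} = -1-\tfrac{1}{e^{\tau t}-1}$ then relates the $\tau=1$ and $\tau=-1$ generating functions, and matching the $t^4/4!$ coefficients gives exactly the claimed relation, the general statement being $B_{r+1,r+1}^{C\times\mathbb{R}_{\geq0}}(z|(\underline\omega,\tau))+B_{r+1,r+1}^{C\times\mathbb{R}_{\geq0}}(z|(\underline\omega,-\tau)) = -(r+1)B_{r,r}^C(z|\underline\omega)$. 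Substituting gives $\tfrac{\pi i}{12}\cdot(-4)B_{3,3}^C = -\tfrac{\pi i}{3}B_{3,3}^C$, which is the statement. The main obstacle I anticipate is the careful bookkeeping in step (i): one must verify that the $S$- and $S^{-1}$-twisted arguments line up so that the reflection and shift equations apply cleanly in the $\pm1/w_1$ slot, and that the leftover $G_1$ genuinely matches the modified $(SK_f)^*G_1$ rather than some shifted or sign-flipped variant. The Bernoulli identity, while requiring the product structure of $\tilde C$, is otherwise a routine generating-function manipulation.
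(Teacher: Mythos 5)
Your proposal is correct and follows the same overall strategy as the paper: equate the two factorizations of $G_2^C$ from theorem \ref{thm:G2Cfactorization} and corollary \ref{cor:altG2Cfact}, reduce the face-by-face ratio $(SK_f)^*G_2/(S^{-1}K_f)^*G_2$ to $\bigl((SK_f)^*G_1\bigr)^{-1}$, and identify the combined Bernoulli prefactors with $\exp(-\tfrac{\pi i}{3}B_{3,3}^C)$. The two sub-steps are justified differently, and in both cases your route is the cleaner one. For the $G_2$ ratio, the paper expands each $G_2$ into q-shifted factorials and invokes the inversion identities \eqref{eq:blockinversion} in a ``short computation,'' whereas you stay at the level of the listed functional equations of $G_r$: the reflection $G_r(-z|-\underline\omega)=G_r(z|\underline\omega)^{-1}$ followed by $G_r(z|\underline\omega)G_r(z|\underline\omega[j])=G_{r-1}(z|\underline\omega^-(j))^{-1}$ applied in the $\pm 1/w_1$ slot; this is exactly equivalent but avoids descending to the q-factorial level. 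For the Bernoulli identity, the paper decomposes $B_{4,4}^{\tilde C}$ into a sum of ordinary $B_{4,4}$'s (via the cone subdivision) and applies Narukawa's identity $B_{r,r}(z|(\underline\omega,\eta))+B_{r,r}(z|(\underline\omega,-\eta))=-rB_{r-1,r-1}(z|\underline\omega)$ termwise, plus ``a short computation for the few additional terms''; you instead exploit the product structure $\tilde C=C\times\mathbb{R}_{\geq 0}$ directly in the generating function \eqref{eq:genBernoulliDef}, so that the $\tilde C$ generating function is the $C$ one times $t/(e^{\tau t}-1)$, and the elementary identity $t/(e^{t}-1)+t/(e^{-t}-1)=-t$ yields $B_{4,4}^{\tilde C}(z|\underline\omega,1)+B_{4,4}^{\tilde C}(z|\underline\omega,-1)=-4B_{3,3}^C(z|\underline\omega)$ in one stroke, with no subdivision bookkeeping. (The only point to flag there is that the two sums $\sum_{k\geq 1}e^{\mp kt}$ do not converge for a common $t$, so the identity should be read at the level of Taylor coefficients of the meromorphic continuation at $t=0$ --- the same convention the paper implicitly uses.) Your sign and index checks for $SK_f(\underline\omega,1)=(-1,w_2,w_3,w_1)$ and $S^{-1}K_f(\underline\omega,1)=(1,w_2,w_3,-w_1)$ agree with the paper's parametrization, so the leftover $G_1$ matches the modified action as claimed.
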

\begin{proof}
	Using equations \eqref{eq:G2Cfactorization1} and \eqref{eq:G2Cfactorization2} we can factorize $G_2^C$  in two equivalent ways,
and if we divide the two different expressions for the factorization we get:
\begin{align*}
1 &= \exp \big [\frac{\pi i}{12} ( B_{4,4}^{\tilde C} ( z | (\underline \omega,-1)) + B_{4,4}^{\tilde C} ( z | (\underline \omega,1)))\big]
\prod_{f\in\Delta_1^C} \frac{ (SK_f)^* G_2 ( z | \underline \omega ) } { (S^{-1}K_f)^* G_2 ( z | \underline \omega ) } . 
\end{align*}
Consider the factor of $G_2$'s at a particular face $f$. Explicitly, if $(SK_f)\cdot ( z | \underline \omega) = ( \frac{z}{\tau} | -\frac{1}{\tau},\frac{\sigma}{\tau},\frac{\epsilon}{\tau})$, then $(S^{-1}K_f)\cdot ( z | \underline \omega) = ( -\frac{z}{\tau} | -\frac{1}{\tau},-\frac{\sigma}{\tau},-\frac{\epsilon}{\tau})$, and we have 
\[
	\frac{ G_2 ( \frac{z}{\tau} | -\frac{1}{\tau},\frac{\sigma}{\tau},\frac{\epsilon}{\tau}) } { G_2 ( -\frac{z}{\tau} | -\frac{1}{\tau},-\frac{\sigma}{\tau},-\frac{\epsilon}{\tau}) } .
\]
Expanding each $G_2$ into two q-factorials according to the definition, and using repeatedly the properties in equation \eqref{eq:blockinversion} and the definition of $G_1$, it is a short computation to see that this is equal to 
\be
	\frac{1}{G_1 \left ( \frac{z}{\tau} | \frac{\sigma}{\tau},\frac{\epsilon}{\tau} \right ) }  = \frac{1}{(SK_f)^* G_1 ( z | \underline \omega)},
\ee
with the abuse of notation explained above. 
Next, using a property of Bernoulli polynomials \cite{Narukawa:2003}:
\be
	B_{r,r} ( z | (\underline \omega, \eta ) ) + B_{r,r} ( z | (\underline \omega, -\eta ) ) = - r B_{r-1,r-1} ( z | \underline \omega ), 
\ee
together with the expression for $B_{4,4}^{\tilde C}$ as a sum of ordinary Bernoulli polynomials, and comparing with the expression for $B_{3,3}^C$ as a sum over $B_{3,3}$'s, and doing a short computation for the few additional terms, we see that indeed
\be
	\exp \big [ \frac{\pi i}{12} ( B_{4,4}^{\tilde C} ( z | (\underline \omega,-1)) + B_{4,4}^{\tilde C} ( z | (\underline \omega,1)))\big] = \exp [ -\frac{\pi i}{3} B_{3,3}^C ( z | \underline \omega ) ] 
\ee
\end{proof}

\end{document}